\newtheorem{theorem}{Theorem} 
\newtheorem{lemma}{Lemma} 
\newtheorem{proposition}{Proposition} 
\theoremstyle{definition}
\newtheorem{remark}{Remark}
\newtheorem{example}{Example}
\newcommand{\email}[1]{{\texttt #1}} 
\newcommand{\TheTitle}{A Coordinate Descent Primal-Dual Algorithm with Large Step Size and Possibly Non-Separable Functions}
\title{{\TheTitle}
\thanks{
A summary of the results of this paper 
has been published in the proceedings of the 
2016 Conference on Decision and Control~\cite{bianchi2016using}.
This work has been supported by the Orange/Telecom ParisTech think tank Phi-TAB.
}}
\author{
  Olivier Fercoq\thanks{LTCI, CNRS, T\'el\'ecom ParisTech, Universit\'e Paris-Saclay, 75013, Paris, France, (\email{olivier.fercoq@telecom-paristech.fr}). }
  \and
  Pascal Bianchi\thanks{LTCI, CNRS, T\'el\'ecom ParisTech, Universit\'e Paris-Saclay, 75013, Paris, France, (\email{pascal.bianchi@telecom-paristech.fr})}
}
\newtheorem{assum}{Assumption}[section]
\providecommand{\norm}[1]{\left\lVert#1\right\rVert}
\newcommand{\prox}{\mathrm{prox}}
\newcommand{\bs}{\boldsymbol}
\newcommand{\cX}{{\mathcal X}}
\newcommand{\cY}{{\mathcal Y}}
\newcommand{\cA}{{\mathcal A}}
\newcommand{\cF}{{\mathcal F}}
\newcommand{\cC}{{\mathcal C}}
\newcommand{\cS}{{\mathcal S}}
\newcommand{\bP}{{\mathbb P}}
\newcommand{\bR}{{\mathbb R}}
\newcommand{\bE}{{\mathbb E}}
\newcommand{\bN}{{\mathbb N}}
\newcommand{\ps}[1]{\langle #1 \rangle}
\newcommand{\dom}{\mathrm{dom}}
\newcommand{\revision}[1]{#1} 
\newcommand{\revisiontrois}[1]{#1} 
\begin{document}

\maketitle

\begin{abstract}
This paper introduces a randomized coordinate descent version of the V\~{u}-Condat algorithm. By coordinate descent, we mean that only a subset of the coordinates of the primal and dual iterates is updated at each iteration, the other coordinates being maintained to their past value. Our method allows us to solve optimization problems with a combination of differentiable functions, constraints as well as non-separable and non-differentiable regularizers. 

We show that the sequences generated by our algorithm almost surely converge to a saddle point of the problem at stake, for a wider range of parameter values than previous methods. In particular, the condition on the step-sizes depends on the coordinate-wise Lipschitz constant of the differentiable function's gradient, which is a major feature allowing classical coordinate descent to perform so well when it is applicable. 
\revisiontrois{We then prove a sublinear rate of convergence in general and 
a linear rate of convergence if the objective enjoys strong convexity 
properties.}

We illustrate the performances of the algorithm on a total-variation regularized least squares regression problem and on large scale support vector machine problems.
\end{abstract}


\section{Introduction}

\subsection{Motivation}


We consider the optimization problem
\begin{equation}
\inf_{x\in \cX} f(x)+g(x)+h(Mx)
\label{eq:pb}
\end{equation}
where $\cX$ is a Euclidean space, $M:\cX\to \cY$ is a linear operator onto a second Euclidean space $\cY$;
functions $f:\cX\to \bR$, $g:\cX\to (-\infty,+\infty]$ and $h:\cY\to ]-\infty,+\infty]$ are assumed proper, closed and convex;
the function $f$ is moreover assumed differentiable. We assume that $\cX$ and $\cY$ are product spaces of the form
$\cX=\cX_1\times\cdots\times \cX_n$ and $\cY=\cY_1\times\cdots\times \cY_p$ for some integers $n,p$.
For any $x\in \cX$, we use the notation $x=(x^{(1)},\dots,x^{(n)})$ to represent the (block of) coordinates of $x$ (similarly for 
$y=(y^{(1)},\dots,y^{(p)})$ in $\cY$). Problem~(\ref{eq:pb}) has numerous applications \emph{e.g.} in machine learning~\cite{cevher2014convex},
image processing~\cite{chambolle2010introduction} or distributed optimization~\cite{boyd2011distributed}.

Under the standard qualification condition $0\in \mathrm{ri}(M\mathrm{dom}g-\mathrm{dom}h)$ (where $\mathrm{dom}$ and 
$\mathrm{ri}$ stand for domain and relative interior, respectively), a point $x\in \cX$ is a minimizer of~(\ref{eq:pb}) if and only if 
there exists $y\in \cY$ such that $(x,y)$ is a saddle point of the Lagrangian function
$$
L(x,y) = f(x)+g(x)+\ps{y,Mx}-h^\star(y)
$$
where $\ps{\,.\,,.\,}$ is the inner product and $h^\star:y\mapsto \sup_{z\in \cY}\ps{y,z}-h(z)$ is the Fenchel-Legendre transform of~$h$.
There is a rich literature on \emph{primal-dual} algorithms searching for a saddle point of $L$ (see \cite{tran2014primal} and references therein).
In the special case where $f=0$, the alternating direction method of multipliers (ADMM)
proposed by Glowinsky and Marroco~\cite{glowinski1975approximation}, Gabay and Mercier \cite{gabay1976dual}
and the algorithm of Chambolle and Pock \cite{chambolle2011first} are amongst the most celebrated ones.
Based on an elegant idea also used in~\cite{he2012convergence}, 
V\~u~\cite{vu2013splitting} and Condat~\cite{condat2013primaldual} separately proposed a primal-dual algorithm allowing
as well to handle $\nabla f$ explicitly, and requiring one evaluation of the gradient of $f$ at each iteration.
Hence, the $\nabla f$ is handled explicitly in the sense that the algorithm does \emph{not} involve, for instance, the call of a proximity operator 
associated with $f$.
A convergence rate analysis is provided in \cite{chambolle2014ergodic} (see also~\cite{tran2014primal}).
A related splitting method has been recently introduced by \cite{davis2015three}.

This paper introduces a \emph{coordinate descent} (CD) version of the
V\~u-Condat algorithm.  By coordinate descent, we mean that only a
subset of the coordinates of the primal and dual iterates is updated
at each iteration, the other coordinates being maintained to their
past value. Coordinate descent was historically used in the context of
coordinate-wise minimization of a unique function in a Gauss-Seidel
sense
\cite{warga1963minimizing,bertsekas1989parallel,Tseng01convergence}.
Tseng \emph{et al.} \cite{Tseng:CCMCDM:Smooth,Tseng:CGDM:Nonsmooth,Tseng:2010:CGD:1861477.1861509}
 and Nesterov \cite{Nesterov:2010RCDM}
developped CD versions of the gradient descent. 
In \cite{Nesterov:2010RCDM} as well as in this paper, the updated
coordinates are randomly chosen at each iteration.  The algorithm of
\cite{Nesterov:2010RCDM} has at least two interesting features. Not
only it is often easier to evaluate a single coordinate of the
gradient vector rather than the whole vector, but the conditions under
which the CD version of the algorithm is provably convergent are
generally weaker than in the case of standard gradient descent.  The key
point is that the \emph{step size} used in the algorithm when updating
a given coordinate $i$ can be chosen to be inversely proportional to
the \emph{coordinate-wise} Lipschitz constant of $\nabla f$ along its
$i$th coordinate, rather than the global Lipschitz constant of $\nabla
f$ (as would be the case in a standard gradient descent). Hence, the
introduction of coordinate descent allows to use \emph{\revision{longer} step
  sizes} which potentially results in a more attractive performance.
The random CD gradient descent of~\cite{Nesterov:2010RCDM} was later
generalized by Richt{\'a}rik and Tak{\'a}{\v c}~\cite{RT:UCDC} to the
minimization of a sum of two convex functions $f+g$ (that is, $h=0$ in
problem~(\ref{eq:pb})).  The algorithm of~\cite{RT:UCDC} is
analyzed under the additional assumption that function $g$ is
\emph{separable} in the sense that for each $x\in \cX$, $g(x) =
\sum_{i=1}^ng_i(x^{(i)})$ for some functions $g_i:\cX_i\to
]-\infty,+\infty]$.  Accelerated and parallel versions of the
algorithm have been later developed by
\cite{RT:TTD2011,RT:PCDM,FR:2013approx,lin2014accelerated}, always assuming
the separability of~$g$.

In the literature, several papers seek to apply the principle of coordinate descent to
primal-dual algorithms.
In the case where $f=0$, $h$ is separable and smooth and $g$ is
strongly convex, Zhang and Xiao \cite{zhang2014stochastic} introduce
 a stochastic CD primal-dual algorithm and analyze its convergence rate
(see also~\cite{suzuki2014stochastic} for related works).
In 2013, Iutzeler et al. \cite{iutzeler2013asynchronous} proved
that random coordinate descent can be successfully applied to fixed point iterations
of firmly non-expansive (FNE) operators. \revision{According} to~\cite{gabay1983chapter},
the ADMM can be written as a fixed point algorithm of a
FNE operator, which led the authors of~\cite{iutzeler2013asynchronous} to propose a coordinate descent version of ADMM with
application to distributed optimization. The key idea behind the
convergence proof of \cite{iutzeler2013asynchronous} is to establish the so-called stochastic
Fej\'er monotonicity of the sequence of iterates as noted by~\cite{combettes2014stochastic}. 
In a more general setting than \cite{iutzeler2013asynchronous}, 
Combettes \emph{et al.} in~\cite{combettes2014stochastic} and Bianchi \emph{et al.} 
\cite{bianchi2014stochastic} extend the proof to the so-called 
$\alpha$-averaged operators, which include FNE operators as a
special case. This generalization allows to apply the coordinate descent principle to
a broader class of primal-dual algorithms which is no longer restricted to 
the ADMM or the Douglas Rachford algorithm. 
For instance, Forward-Backward splitting is considered in~\cite{combettes2014stochastic}
and \revision{particular cases of the V\~u-Condat algorithm are considered in~\cite{bianchi2014stochastic, pesquet2014randomprimaldual}.
Nevertheless, the above approach has two major limitations.

First, in order to derive a converging coordinate descent version of a given deterministic algorithm,
the latter must write as a fixed point algorithm over some product Hilbert space of the form $H=H_1\times \cdots H_q$
where the inner product in $H$ is the sum of the inner products in the $H_i$'s.
Unfortunately, this condition  does {\em not} hold in general 
for the V\~{u}-Condat method, because the inner product over $H$ involves the coupling linear operator $M$.
A workaround was proposed in~\cite{bianchi2014stochastic}
but for a particular example only.

Second and even more importantly, 
the approach of~\cite{iutzeler2013asynchronous,combettes2014stochastic,bianchi2014stochastic,pesquet2014randomprimaldual}
needs ``small'' step sizes. More precisely, the convergence conditions are identical to the ones of the brute method, the one without coordinate descent.
These conditions  involve the global Lipschitz constant of the gradient $\nabla f$ instead than its coordinate-wise Lipschitz constants.
In practice, it means that the application of coordinate descent to primal-dual algorithm as 
suggested by~\cite{combettes2014stochastic} and \cite{bianchi2014stochastic}
is restricted to the use of potentially small step sizes.
One of the major benefits of coordinate descent is lost. 

\revisiontrois{Some recent works also focused on designing primal-dual coordinate 
descent methods with a guaranteed convergence rate. In~\cite{gao2016randomized} and~\cite{chambolle2017stochastic},
a $O(1/k)$ rate is obtained for the ergodic mean of the sequences.
The rates are given in terms of feasibility and optimality or Bregman distance.
Those two papers require all the dual variables to be updated at each iteration, which may
not be efficient if there are more than a few dual variables.
In the present paper, we will have much more flexibility in the variables we
choose to update at each iteration, while retaining a provable convergence rate.
}

\subsection{Contribution}

\begin{itemize}
\item Our main contribution is to provide a CD primal-dual algorithm with a broad
range of admissible step sizes. Our numerical experiments show that remarkable performance gains
can be obtained when using larger step sizes.
\item \revisiontrois{We identify two setups for which the structure of the problem is favorable to coordinate descent algorithms.}
\item \revisiontrois{We prove a sublinear rate of convergence in general and 
	a linear rate of convergence if the objective enjoys strong convexity 
	properties.}
\end{itemize}

}


\subsection{Organization of the paper}

The algorithm is introduced in
Section~\ref{sec:algo}. At each iteration $k$, an index $i$ is
randomly chosen w.r.t. the uniform distribution in $\{1,\dots,n\}$
where $n$ is, as we recall, the number of primal coordinates.  The
coordinate $x_k^{(i)}$ of the current primal iterate $x_k$ is updated,
as well as a set of associated dual iterates.  Under some assumptions
involving the coordinate-wise Lipschitz constants of $\nabla f$, the
primal-dual iterates converges to a saddle point of the Lagrangian. As a remarkable feature, our CD algorithm
makes no assumption of separability of the functions $f$, $g$ or $h$. In
the special case where $h=0$ and $g$ is separable, the algorithm reduces to the CD proximal
gradient algorithm of \cite{RT:UCDC}.

The convergence proof is provided in Section~\ref{sec:proof}. It is
worth noting that, under the stated assumption on the step-size, the
stochastic \revision{Fej\'er} monotonicity of the sequence of iterates, which is
the key idea in \cite{iutzeler2013asynchronous,combettes2014stochastic,bianchi2014stochastic},
 does not hold (a
counter-example is provided). Our proof relies on the introduction of
an adequate Lyapunov function. 
In Section~\ref{sec:rate}, we prove a sublinear rate of convergence in general and 
	a linear rate of convergence if the objective enjoys strong convexity 
	properties.
 In Section~\ref{sec:simu}, the
proposed algorithm is instantiated to the case of total-variation  regularization
and support vector machines.  Numerical results performed on real MRI
and text data establish the attractive behavior of the proposed algorithm and
emphasize the importance of using primal-dual CD with large step
sizes.

\section{Coordinate Descent Primal-Dual Algorithm}
\label{sec:algo}

\subsection{Notation}

 We note $M = (M_{j,i}:j \in\{1,\dots, p\},\,i\in\{1,\dots, n\})$ where $M_{j,i}:\cX_i\to\cY_j$ are the block components of $M$.
For each $j \in \{1,\dots,p\}$, we introduce the set
$$
I(j) := \Big\{i \in \{1,\dots,n\} \,:\,M_{j,i}\neq 0\Big\}\,.
$$
Otherwise stated, the $j$th component of vector $Mx$ only depends on $x$ through the coordinates $x^{(i)}$ such that $i\in I(j)$.
We denote by
$$
m_j := \mathrm{card}(I(j))
$$
the number of such coordinates. Without loss of generality, we assume that $m_j\neq 0$ for all $j$.
\revisiontrois{We also denote
\[
\pi_j := \frac{1}{\mathrm{card}(I(j))} \,.
\]
}

For all $i \in \{1,\dots,n\}$, we define
\[
J(i) := \Big\{ j \in \{1,\dots,p\}\,:\, M_{j,i}\neq 0\Big\}\,.
\]
Note that for every pair $(i,j)$, the statements $i\in I(j)$ and $j\in J(i)$ are equivalent.

If $\ell$ is an integer, 
$\gamma= (\gamma_1,\dots,\gamma_\ell)$ is a collection of positive
real numbers and $\cA = \cA_1\times \dots\times \cA_\ell$ is a product of Euclidean spaces, 
we introduce the weighted norm $\|\,.\,\|_\gamma$ on $\cA$ given by
$\|u\|^2_\gamma = \sum_{i=1}^\ell \gamma_i \|u^{(i)}\|^2_{\cA_i}$ for every
$u=(u^{(1)},\dots,u^{(\ell)})$ where $\|\,.\,\|_{\cA_i}$ stand for the norm on $\cA_i$.
If $F:\cA\to ]-\infty,+\infty]$ denotes a convex proper lower-semicontinuous
function, we introduce the proximity operator $\prox_{\gamma,F}:\cA\to\cA$ defined 
for any $u\in \cA$ by
$$
\prox_{\gamma,F}(u) := \arg\min_{w\in \cA} \big[ F(w)+\frac 12\|w-u\|^2_{\gamma^{-1}} \big]
$$
where we use the notation $\gamma^{-1} = (\gamma_1^{-1},\dots,\gamma_\ell^{-1})$.
We denote by $\prox_{\gamma,F}^{(i)}:\cA\to\cA_i$ the $i$th coordinate mapping of $\prox_{\gamma,F}$ that is,
$\prox_{\gamma,F}(u)=(\prox_{\gamma,F}^{(1)}(u),\dots,\prox_{\gamma,F}^{(\ell)}(u))$ for any $u\in \cA$.
The notation $D_{\cA}(\gamma)$ (or simply $D(\gamma)$ when no ambiguity occurs) stands
for the diagonal operator on $\cA\to\cA$ given by $D_{\cA}(\gamma)(u) = (\gamma_1 u^{(1)},\dots,\gamma_\ell u^{(\ell)})$ for every
$u=(u^{(1)},\dots,u^{(\ell)})$.

Finally, the adjoint of a linear operator $B$ is denoted $B^\star$. 
The spectral radius of a square matrix $A$ is denoted by $\rho(A)$.
The number of nonzero elements of a matrix $A$ is denoted by $\mathrm{nnz}(A)$.

\subsection{Main algorithm}

Consider Problem~(\ref{eq:pb}).
Let $\sigma = (\sigma_1,\dots,\sigma_p)$ and $\tau=(\tau_1,\dots,\tau_n)$ be two tuples of positive real numbers.
Consider an independent and identically distributed sequence $(i_k\,:\, k\in \bN^*)$ with uniform distribution on 
$\{1,\dots,n\}$\footnote{The results of this paper easily extend to the selection of several primal coordinates
	at each iteration with a uniform samplings of the coordinates, 
	using the techniques introduced in \cite{RT:PCDM}.}.
The proposed primal-dual CD algorithm consists in updating two sequences $x_k\in \cX$, $y_k\in \cY$.
It is provided in Algorithm~\ref{algo:notduplicated} below.

\revisiontrois{
\begin{algorithm}[ht]
	\caption{Coordinate-descent primal-dual algorithm}
	\label{algo:notduplicated}
\revisiontrois{
	\noindent {\bf Initialization}: Choose $x_0\in \cX$, $y_0\in \cY$. \\ 
	\noindent {\bf Iteration $k$}: Define:
	\begin{align*}
	\overline y_{k+1} &= \prox_{\sigma,h^\star}\big(y_k+D(\sigma) Mx_k\big) \\
	\overline x_{k+1} &= \prox_{\tau,g}\Big(x_k-D(\tau)\left(\nabla f(x_k)+2M^\star \overline {y}_{k+1}- M^\star y_k\right)\Big)\,.
	\end{align*}
	For $i=i_{k+1}$ and for each $j \in J(i_{k+1})$, update:
	\begin{align*}
	&{x}_{k+1}^{(i)} = \overline {x}_{k+1}^{(i)} \\
	&y_{k+1}^{(j)} = y_{k}^{(j)} + \pi_{j} (\overline {y}_{k+1}^{(j)} - y_{k}^{(j)})\,.
	\end{align*}
	Otherwise, set ${x}_{k+1}^{(i')}=x_k^{(i')}$, and 
	$y_{k+1}^{(j')}=y_{k}^{(j')}$.
	}
\end{algorithm}
}

For every $i\in \{1,\dots,n\}$, we denote by $U_i:\cX_i\to\cX$ the linear operator such that all coordinates of $U_i(u)$ are zero except the $i$th coordinate
which coincides with $u$:
$U_i(u)=(0,\cdots,0, u, 0, \cdots,0)$.   
Our convergence result holds under the following assumptions.
\begin{assum}
	\label{hyp:main}
	\begin{enumerate}[a)]
		\item \label{hyp:main-convex} The functions $f$, $g$, $h$ are closed proper and convex.
		\item \label{hyp:main-diff} The function $f$ is differentiable on $\cX$.
		\item \label{hyp:main-smooth} For every $i \in \{1,\dots,n\}$, there exists $\beta_i\geq 0$ such that for any $x\in \cX$, any $u\in \cX_i$,
		\[
		f(x+U_i u)\leq f(x) + \ps{\nabla f(x), U_i u}+\frac{\beta_i}2\|u\|^2_{\cX_i}\,.
		\]
		\item \label{hyp:main-unif} The random sequence $(i_k)_{k\in \bN^*}$ is independent,  uniformly distributed on  $\{1,\dots,n\}$.
		\item \label{hyp:main-norm} The step sizes $\tau = (\tau_1, \ldots, \tau_n)$ and $\sigma = (\sigma_1, \ldots, \sigma_p)$ satisfy for all $i \in \{1,\dots,n\}$,
		\[
		\tau_i < \frac 1{\beta_i+\rho\left(\sum_{j\in J(i)}\revisiontrois{(2-\pi_{j})}m_j\sigma_j M_{j,i}^\star M_{j,i}\right)}\,.
		\]
	\end{enumerate}
\end{assum}
We denote by $\cS$ the set of saddle points of the Lagrangian function $L$. 
Otherwise stated, a couple $(x_*,y_*)\in \cX\times\cY$ lies in $\cS$ if and only if it satisfies the following inclusions
\begin{align}
0&\in \nabla f(x_*)+\partial g(x_*)+M^\star y_* \\
0&\in -Mx_*+\partial h^\star(y_*)\,.
\label{eq:pd}
\end{align}
We shall also refer to elements of $\cS$ as primal-dual solutions.

\revisiontrois{
\begin{theorem}
	\label{the:notduplicated}
	Let Assumption~\ref{hyp:main} hold true and suppose that $\cS\neq\emptyset$. Let $(x_k, y_k)$ 
	be a sequence generated by Algorithm~\ref{algo:notduplicated}. Almost surely,
	there exists $(x_*,y_*)\in \cS$ such that
	\begin{align*}
	&\lim_{k\to\infty} x_k = x_* \\
	&\lim_{k\to\infty} y_k = y_* \,.
	\end{align*}
\end{theorem}

}

\subsection{Efficient implementation using problem structure}

In Algorithm~\ref{algo:notduplicated}, it is worth noting that quantities $(\overline x_{k+1},\overline y_{k+1})$
do not need to be explicitly calculated. At iteration $k$, only the coordinates
\[
\overline x_{k+1}^{(i_{k+1})}\text{  and  }\,\overline y_{k+1}^{(j)}, \ \ \forall j\in J(i_{k+1})
\]
are needed to perform the update. 
From a computational point of view, it is often the case that the evaluation of the above coordinates
is less demanding than the computation of the whole vectors $\overline x_{k+1}$, $\overline y_{k+1}$. Two situations have been reported in the literature:
\revisiontrois{
\begin{itemize}
\item If $g$ is separable, one only needs to compute the quantities $\nabla_{i_{k+1}} f(x_k)$, $(2M^\star \bar y_{k+1} - M^\star y_k)^{(i_{k+1})}$ and $\prox_{\tau_{i_{k+1}}, g_{i_{k+1}}}$ to perform the $k$th iteration. 
A~classical example of such smart residual update~\cite{Nesterov-Subgrad-Huge} can be found in the proximal coordinate descent gradient algorithm (case $g$ separable and $h = 0$) \cite{richtarik2014iteration}. 
More generally, if $g$ (resp. $h^\star$) is 
block-separable, we can use this structure in the algorithm, even if
this block structure does not match $\cX_1 \times \ldots \times \cX_n$ (resp. $\cY_1 \times \ldots \times \cY_p$).

We used this idea in Section~\ref{sec:exp_tv} to deal efficiently with
the proximal operator of the $\ell_{2,1}$ norm.
	
\item If $g$ is the indicator of the consensus constraint $\{x_1 = \dots = x_n\}$, $f$ is separable and $h=0$, we recover MISO~\cite{mairal2015incremental}. In that case, we can store $\nabla f(x_k)$ and update its average. Thanks to the separability of $f$, only
one coordinate of $\nabla f(x_k)$ needs to be updated at each iteration.
	
We used similar ideas in Section~\ref{sec:svm} to deal efficiently with
the projection onto the subspace orthogonal to a vector.
\end{itemize}

To illustrate the importance of these implementation 
tricks, we give in the following table a comparison of the number of operations to compute the updates
of the standard V\~u-Condat method against the proposed algorithm.

}

\begin{table}[h]
  \centering
  \begin{tabular}{|c|c|c|}
\hline
Problem / Dimension of data & V\~u-Condat & Our algorithm \\
\hline
Total Variation + $\ell_1$ regularization  & $O(mn + 6n)$ &  $O(m +  12)$ \\
$A \in \mathbb R^{m \times n}$:\! dense; $M \in \mathbb R^{3 n \times n}$:\! $\mathrm{nnz}(M) = 6n$ & & \\
\hline
Support Vector Machines & $O(\mathrm{nnz}(A) + n)$& $O(\mathrm{nnz}(Ae_i) + 1)$ \\
$A \in \mathbb R^{m \times n}$: sparse && \\
\hline
  \end{tabular}
\smallskip
  \caption{\small Number of operations per iteration for the proposed algorithm and for the standard V\~u-Condat algorithm
- The use cases are the ones described in the numerical section. 
The numbers 6 and 12 highlight the (mild) overhead of duplication in the Total Variation + $\ell_1$ regularized least squares problem.}
\end{table}

\subsection{Primal dual coordinate descent with duplicated dual variables}

\revisiontrois{
In this section, we present a generalization of Algorithm~\ref{algo:notduplicated} that allows for more flexibility in
the update rule for dual variables. It will also be a convenient formulation for the analysis.
}

Recall that $\cY=\cY_1\times\cdots\times \cY_p$.
For every $j\in \{1,\dots,p\}$, we use the notation $\bs \cY_j:=\cY_j^{I(j)}$, 
\revisiontrois{which means that $\bs \cY_j$ consists of $|I(j)|$ copies of $\cY_j$ indexed by $I(j)$.}
An arbitrary element $\bs u$ in $\bs \cY_j$ will be represented by $\bs u=(\bs u{(i)}\,:\,i\in I(j))$. 
We define $\bs \cY:=\bs \cY_1\times \cdots\times \bs\cY_p$. An arbitrary element $\bs y$ in $\bs \cY$ will be represented
as $\bs y=(\bs y^{(1)},\dots,\bs y^{(p)})$ and we shall call such an element a duplicated dual variable. This notation is recalled in Table~\ref{tab:indices} below.
\begin{table}[ht]
\centering
\caption{Standing notation.}
\begin{tabular}[h]{|c|c|c|}
\hline
Space & Element & \revisiontrois{Dimension}  \\
&         & \revisiontrois{(if blocks of size 1)} \\
\hline
$\cX = \cX_1\times\cdots\times \cX_n$ & $x= (x^{(i)}:i \in \{1,\dots,n\})$ & \revisiontrois{$n$} \\
\hline
$\cY = \cY_1\times\cdots\times \cY_p$ & $y= (y^{(j)}:j \in \{1,\dots,p\})$ & \revisiontrois{$p$} \\
\hline
$\bs\cY_j = \cY_j^{I(j)}$ & $\bs u= (\bs u(i):i\in I(j))$ & \revisiontrois{$|I(j)|$} \\
\hline
$\bs\cY = \bs\cY_1\times\cdots\times \bs\cY_p$ & $\bs y= (\bs y^{(j)}:j \in \{1,\dots,p\})$ & \revisiontrois{$\mathrm{nnz}(M)$} \\ & where $\bs y^{(j)}= (\bs y^{(j)}(i):i\in I(j))$ $\forall j$ & \\
\hline
\end{tabular}
\label{tab:indices}
\end{table}

\revisiontrois{In our algorithm, we will stack a collection 
	of primal variables $(x_k^{(i)}:i\!\in\! \{1,\dots,n\})$ at iteration $k$,
	and a set of (duplicated) dual variables 
	$(\bs y_k^{(j)}(i):i\in \{1,\dots,n\},j\in J(i))$.
	In a coordinate descent spirit, we however update only a subset of these variables
	at every iteration $k$. First, we choose uniformly at random a
	block of primal coordinates $i_{k+1}$: eventually, only the primal variable $x_k^{(i_{k+1})}$ will be updated.
	As far as the dual variables are concerned, a natural choice is to update the dual variables
	$(\bs y_k^{(j)}(i_{k+1}):j\in J(i_{k+1}))$ associated to the primal variable $x_k^{(i_{k+1})}$. 
	This case will be investigated in Section~\ref{sec:Jminimal}. 
	For reasons that will be made clear later on, it may be interesting in some situations
	to update a larger set of duplicated dual variables at iteration $k$, namely
	$(\bs y_k^{(j)}(l):(l,j)\in {\mathcal J}(i_{k+1}))$ where for every $i\in \{1,\dots,n\}$,
	$\mathcal J(i)$ is a subset of $\{1,\dots,n\}\times\{1,\dots,p\}$ chosen in such a way that
	\begin{equation}
	\{i\}\times J(i)\,\subset\,\mathcal J(i)\,\subset\, \left\{(l,j):j\in J(l)\right\}\,.
	\label{eq:constraintJ}
	\end{equation}
	We shall also
	define the probability that $j \in J(i_{k+1})$ knowing that 
	$(l,j) 	\in \mathcal J(i_{k+1})$ as
	\begin{equation}
	\label{eq:def_pi_j}
	\bs \pi_{j}(i) = \frac{1}{\mathrm{card}(\{l \,:\, (i,j) \in \mathcal J(l) \})} \,.
	\end{equation}
	Note that $0 < \bs \pi_{j}(i) \leq 1$. In the special case where $\mathcal J(i)=\{i\}\times J(i)$, note also that $\bs \pi_{j}(i)=1$ for every $j\in J(i)$.
}

As for Algorithm~\ref{algo:notduplicated}, we consider an independent and identically distributed sequence $(i_k\,:\, k\in \bN^*)$ with uniform distribution on 
$\{1,\dots,n\}$.
The algorithm consists in updating four sequences $x_k\in \cX$, $w_k\in \cX$, $z_k\in \cY$ and $\bs y_k\in \bs\cY$.
It is provided in Algorithm~\ref{algo:main} below.

\begin{algorithm}[ht]
\caption{Coordinate-descent primal-dual algorithm \revisiontrois{with duplicated variables}}
\label{algo:main}
  \noindent {\bf Initialization}: Choose $x_0\in \cX$, $\bs y_0\in \bs\cY$. \\ For all $i \in \{1,\dots,n\}$, set $w_0^{(i)} = \sum_{j\in
J(i)}M_{j,i}^\star\,\bs y_0^{(j)}(i)$.\\ For all $j \in \{1,\dots,p\}$, set
$z_0^{(j)} = \frac 1{m_j}\sum_{i\in I(j)}\bs y_0^{(j)}(i)$.

  \noindent {\bf Iteration $k$}: Define:
  \begin{align*}
    \overline y_{k+1} &= \prox_{\sigma,h^\star}\big(z_k+D(\sigma) Mx_k\big) \\
    \overline x_{k+1} &= \prox_{\tau,g}\Big(x_k-D(\tau)\left(\nabla f(x_k)+2M^\star \overline {y}_{k+1}- w_k\right)\Big)\,.
  \end{align*}
  For $i=i_{k+1}$ and for each \revisiontrois{ $(l,j) \in \mathcal J(i_{k+1})$}, update:
  \begin{align*}
    &{x}_{k+1}^{(i)} = \overline {x}_{k+1}^{(i)} \\
    &{\bs y}_{k+1}^{(j)}(l) = {\bs y}_{k}^{(j)}(l) + \revisiontrois{\bs \pi_j(l)} (\overline {y}_{k+1}^{(j)} - {\bs y}_{k}^{(j)}(l))\\
    &w_{k+1}^{(l)} = w_{k}^{(l)} + \sum_{(l,j) \in \mathcal J(i)}M_{j,l}^\star\,({\bs y}_{k+1}^{(j)}(l)-\bs y_k^{(j)}(l))\\
    &z_{k+1}^{(j)} = z_{k}^{(j)} + \frac 1{m_j} \sum_{l:(l,j) \in \mathcal J(i)}({\bs y}_{k+1}^{(j)}(l)
    - \bs y_k^{(j)}(l)) \,.
  \end{align*}
  Otherwise, set ${x}_{k+1}^{(i')}=x_k^{(i')}$,
  $w_{k+1}^{(l')}=w_k^{(l')}$, $z_{k+1}^{(j')}=z_k^{(j')}$ and ${\bs y}_{k+1}^{(j')}(l')={\bs y}_{k}^{(j')}(l')$.
\end{algorithm}

\begin{theorem}
  \label{the:main}
Let Assumption~\ref{hyp:main} hold true and
\begin{equation}
\label{eq:tau_algo2}
\tau_i < \frac 1{\beta_i+\rho\left(\sum_{j\in J(i)}(2-\bs \pi_{j}(i))m_j\sigma_j M_{j,i}^\star M_{j,i}\right)}\,.
\end{equation}
Suppose that Eq. (\ref{eq:constraintJ}) holds and that $\cS\neq\emptyset$. Let $(x_k, \bs y_k)$ 
be a sequence generated by Algorithm~\ref{algo:main}. Almost surely,
there exists $(x_*,y_*)\in \cS$ such that
\begin{align*}
  &\lim_{k\to\infty} x_k = x_* \\
  &\lim_{k\to\infty} \bs y_k^{(j)}(i) = y_*^{(j)}\qquad (\forall j \in \{1,\dots,p\},\ \forall i\in I(j))\,.
\end{align*}
\end{theorem}

\subsection{Special Cases}

\revisiontrois{
\subsubsection{The case $\mathcal J(i) = \{i\} \times J(i)$ for all $i$}
\label{sec:Jminimal}

According to \eqref{eq:constraintJ}, the smallest possible choice
for $\mathcal J(i)$ is $\mathcal J(i) = \{i\} \times J(i)$.
In that case, $\bs \pi_j(i) = 1$ for all $j \in J(i)$ and the update of the dual variable simplifies to:
\[
\forall j \in J(i_{k+1}), \quad \bs y_{k+1}^{(j)}(i_{k+1}) = \overline y_{k+1}^j\,.
\]
This choice of dual sampling also implies that the primal and dual variables are grouped into $n$ disjoint 
primal-dual blocks of the type $(x^{(i)}, (\bs y_{k+1}^{(j)}(i))_{j \in J(i)} )$.

\subsubsection{The case $\mathcal J(i) = \cup_{j \in J(i)} I(j) \times J(i)$ for all $i$}
With this update scheme for dual variables, given $i_{k+1}$, we update
$\bs y_{k+1}^{(j)}(l)$ for all $j \in J(i_{k+1})$ and all $l \in I(j)$.
Said otherwise, we update all the copies of $y_{k+1}^{(j)}$ as soon as
one of them has to be updated.

We have $\bs \pi_j(l) = \frac{1}{|I(j)|} = \frac{1}{m_j}$ for all $l \in I(j)$.
The advantage of this update scheme is that, provided there exists $y'$ such that $\bs y_{0}^{(j)}(l) = {y'}_0^{(j)}$ for all $l \in I(j)$, we have for all $l \in I(j)$ and all $k \geq 0$, 
\[
\bs y_{k+1}^{(j)}(l) = {y'}_{k+1}^{(j)}= \frac{1}{m_j} \overline y^{(j)}_{k+1} + (1-\frac{1}{m_j}) {y'}_k^{(j)}.
\]
Hence, choosing $\mathcal J(i) = \cup_{j \in J(i)} I(j) \times J(i)$
allows us to undo the duplication of dual variables and reduce the size of the vector of dual variables from 
the number of nonzero elements in $M$, $\mathrm{nnz}(M)$, to its number of rows $p$.

This shows the following equivalence result.
\begin{proposition}
Algorithm~\ref{algo:notduplicated} with initial point $y_0'$ is equivalent to Algorithm~\ref{algo:main} with the choice of dual sampling $\mathcal J(i) = \cup_{j \in J(i)} I(j) \times J(i)$, $\forall i \in \{1, \ldots , n \}$ and initial point 
$\bs y_0^{(j)}(l) = {y_0'}^{(j)}$, $\forall j \in \{1, \ldots, p\}$, $\forall l \in I(j)$.
\end{proposition}
So, a byproduct of the proof of Theorem~\ref{the:main} will be a proof for Theorem~\ref{algo:notduplicated}.

}

\subsubsection{The Case $m_1=\dots=m_p=1$}

We consider the special case $m_1=\cdots = m_p = 1$.
\revisiontrois{Otherwise stated, the linear operator $M$ has a single nonzero component $M_{j,i}$ per row $j \in \{1,\dots,p\}$.
This happens for instance in the context of distributed optimization \cite{bianchi2014stochastic}.
This case will also be extensively used in the proofs.

In this scenario, the notations can be drastically simplied. Indeed, for every $j\in \{1,\dots,p\}$,
$I(j)$ is a singleton. The corresponding set of duplicated dual variables $(\bs y_k^{(j)}(i):i\in I(j))$ is reduced
to a single variable $\bs y_k^{(j)}(I(j))$, which we shall simply denote as $y_k^{(j)}$.
According to~(\ref{eq:constraintJ}), $\mathcal J(i)$ is a subset of $\{(l,j) : l\in I(j)\}$ which simply coincides
with the set $\{(I(j),j):j\in\{1,\dots,p\}$. Therefore, the set $\mathcal J(i)$ is uniquely determined by its projection
onto the second set of indices. Otherwise stated, the selection of $\mathcal J(i)$ for a given $i$ is equivalent to the selection
of a subset of $\{1,\dots,p\}$ which we abusively denote by $\mathcal J(i)$ in this paragraph.

Then, Algorithm~\ref{algo:main} simplifies to Algorithm~\ref{algo:casSeparable} below. Note that Algorithm~\ref{algo:casSeparable} has a range of applicability which is different from Algorithm~\ref{algo:notduplicated}. We make an additional assumption on $M$ but we have more freedom on the dual sampling $\mathcal J$.}
\begin{algorithm}
\caption{Coordinate-descent primal-dual algorithm - Case $m_1=\cdots = m_p = 1$.}
\label{algo:casSeparable}
  \noindent {\bf Initialization}: Choose $x_0\in \cX$, $y_0\in \cY$.

  \noindent {\bf Iteration $k$}: Define:
  \begin{align*}
    \overline y_{k+1} &= \prox_{\sigma,h^\star}\big(y_k+D(\sigma) Mx_k\big) \\
    \overline x_{k+1} &= \prox_{\tau,g}\Big(x_k-D(\tau)\big(\nabla f(x_k)+M^\star (2\overline {y}_{k+1}-y_k)\big)\Big)\,.
  \end{align*}
  For $i=i_{k+1}$ and for each $j\in \mathcal J(i_{k+1}) $, update:
  \begin{align*}
    &{x}_{k+1}^{(i)} = \overline {x}_{k+1}^{(i)} \\
    & y_{k+1}^{(j)} = y_{k}^{(j)} + \pi_j (\overline {y}_{k+1}^{(j)} - y_{k}^{(j)}) \,.
  \end{align*}
  Otherwise, set ${x}_{k+1}^{(i')}=x_k^{(i')}$, $y_{k+1}^{(j')}={y}_{k}^{(j')}$.
\end{algorithm}

\subsubsection{The Case $h=0$}

Instanciating Algorithm~\ref{algo:main} in the special case $h=0$, it boils down to the following CD forward-backward algorithm:
\begin{equation}
x_{k+1}^{(i)} = \left\{
  \begin{array}[h]{ll}
    \prox_{\tau,g}^{(i)}\big(x_k-D(\tau)\nabla f(x_k)\big), \quad& \text{if } i=i_{k+1}, \\
    x_k^{(i)}, & \text{otherwise.}
  \end{array}\right.\label{eq:fbCD}
\end{equation}
As a consequence, Algorithm~\ref{algo:main} allows to recover the CD proximal gradient algorithm 
of~\cite{RT:UCDC} with the notable difference that we do \emph{not} assume the separability of $g$.
On the other hand, Assumption~\ref{hyp:main}(\ref{hyp:main-norm}) becomes
$\tau_i < 1/\beta_i$ whereas in the separable case, \cite{RT:UCDC} assumes $\tau_i = 1/\beta_i$.
This remark leads us to conjecture that, even though Assumption~\ref{hyp:main}(\ref{hyp:main-norm}) generally allows for the use of larger
step sizes \revision{than} the ones suggested by the approach of \cite{combettes2014stochastic,bianchi2014stochastic}, 
one might be able to use even larger step sizes than the ones allowed by Theorem~\ref{the:main}.

Note that a similar CD forward-backward algorithm can be found in~\cite{combettes2014stochastic} with no need to require the separability of $g$.
However, the algorithm of~\cite{combettes2014stochastic} assumes that the step size $\tau_i$ (there assumed to be independent of $i$)
is less than $2/\beta$ where $\beta$ is the \emph{global} Lipschitz constant of $\nabla f$. As discussed in the introduction, an attractive feature
of our algorithm is the fact that our convergence condition $\tau_i<1/\beta_i$ only involves the coordinate-wise Lipschitz constant of $\nabla f$.

\subsection{Failure of Stochastic Fej\'er Monotonicity}

As discussed in the introduction, an existing approach to prove convergence
of CD algorithm in a general setting (that is, not restricted to $h=0$ and separable $g$)
is to establish the stochastic Fej\'er monotonicity of the iterates.
The idea was used in~\cite{iutzeler2013asynchronous} and 
extended by~\cite{combettes2014stochastic} and~\cite{bianchi2014stochastic} to a more general setting.
Unfortunately, this approach implies to select a ``small'' step size as noticed in the previous section.
The use of small step size is unfortunate in practice, as it may significantly affect the 
convergence rate. 

It is natural to ask whether the existing convergence proof
based on stochastic Fej\'er monotonicity can be extended to the use of larger step sizes. 
The answer is negative, as shown by the following example.

\begin{example}
Consider the toy problem 
\[
\min_{x \in \mathbb{R}^3} \frac{1}{2} (x^{(1)}+x^{(2)}+x^{(3)}-1)^2
\]
that is we take $f(x) = \frac{1}{2} (x^{(1)}+x^{(2)}+x^{(3)}-1)^2$ and $g=h=M=0$. 
One of the minimizers is $x_* = ( \frac{1}{3},  \frac{1}{3},  \frac{1}{3})$.
The global Lipschitz constant of $\nabla f$ is equal to $3$ and the coordinate-wise Lipschitz constants are equal to 1. 
The CD proximal gradient algorithm (\ref{eq:fbCD}) writes
$$
x_{k+1}^{(i)} = \left\{
  \begin{array}[h]{ll}
    x_k^{(i)}-\tau (x_k^{(1)}+x_k^{(2)}+x_k^{(3)}-1) \quad& \text{if } i=i_{k+1} \\
    x_k^{(i)} & \text{otherwise}
  \end{array}\right.
$$
where we used $\tau_1=\tau_2 =\tau_3\triangleq \tau$ for simplicity.
By Theorem~\ref{the:main}, $x_k$ converges almost surely to $x_*$ whenever $\tau<1$.
Setting $x_0=0$, one has  $\|x_0-x_*\|^2 = \frac{1}{3}$. It is immediately seen that 
$\bE\|x_1-x_*\|^2 = (\tau-\frac 13)^2 + \frac 19+\frac 19$ where $\bE$ represents the expectation. In particular,
$\bE\|x_1-x_*\|^2> \|x_0-x_*\|^2$ as soon as $\tau>2/3$.
Therefore, the sequence $\bE\|x_k-x_*\|^2$ is not decreasing.
This example shows that the proof techniques based on monotone operators and Fej\'er monotonicity
are not directly applicable in the case of long step sizes.
Indeed, as shown in Lemma~\ref{lem:inegSk} below, one needs to make use of another Lyapunov function,
defined in \eqref{eq:contraction}. That inequality shows that the sequence
exhibits a stochastic monotonicity property in the Bregman divergence sense~\cite{bauschke2003bregman}.
\end{example}

\section{Proof of Theorem~\ref{the:main}}

\label{sec:proof}


\subsection{Preliminary Lemma}

For every $(x,y)\in\cX\times \cY$, we define 
\begin{equation}
\label{eq:def-V}
V(x,y) := \frac 12\|x\|^2_{\tau^{-1}}+\ps{y,Mx} +\frac 12\|y\|^2_{\sigma^{-1}}\,.
\end{equation}

\begin{lemma}
\label{lem:lyap}
Let Assumption~\ref{hyp:main}(\ref{hyp:main-convex}-\ref{hyp:main-diff}) hold true.  Let $(x,y)\in \cX\times\cY$ and $(x_*,y_*)\in \cS$. Define
  \begin{align*}
    \overline y &= \prox_{\sigma, h^\star}\big(y+D(\sigma) Mx\big)\\
    \overline x &= \prox_{\tau,g}\Big(x-D(\tau)\big(\nabla f(x)+
    M^\star (2\overline y-y)\big) \Big)
  \end{align*}
and set 
$z=(x,y)$, $z_*=(x_*,y_*)$, $\overline z=(\overline x,\overline y)$. Then,
$$
 \ps{\nabla f(x_*)-\nabla f(x),x_*-\overline x}+V(\overline z-z)\leq  V(z-z_*)-V(\overline z-z_*)\,.
$$
\end{lemma}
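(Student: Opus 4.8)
The plan is to prove this as the one-step estimate for the non-randomized V\~{u}-Condat update (the coordinate sampling plays no role here). Throughout I read $V(z,z')$ as $V$ evaluated at the componentwise difference $z-z'$; since $V$ is a quadratic form this also equals the Bregman divergence of $V$. The argument combines four convexity inequalities and then matches the resulting quadratic and bilinear terms with $V$.

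First I would write the first-order optimality conditions of the two proximal steps. From $\overline y=\prox_{\sigma,h^\star}\big(y+D(\sigma)Mx\big)$ one gets $D(\sigma)^{-1}(y-\overline y)+Mx\in\partial h^\star(\overline y)$, and from the definition of $\overline x$ one gets $D(\tau)^{-1}(x-\overline x)-\nabla f(x)-M^\star(2\overline y-y)\in\partial g(\overline x)$. From the inclusions defining $\cS$, the point $(x_*,y_*)$ satisfies $-\nabla f(x_*)-M^\star y_*\in\partial g(x_*)$ and $Mx_*\in\partial h^\star(y_*)$.

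Next I would use convexity of $g$ in both directions -- the subgradient at $\overline x$ to lower-bound $g(x_*)$, and the subgradient at $x_*$ to lower-bound $g(\overline x)$ -- and add the two inequalities so that the values $g(\overline x)$ and $g(x_*)$ cancel; the same manipulation with $h^\star$ at $\overline y$ and $y_*$ removes $h^\star(\overline y)$ and $h^\star(y_*)$. Summing the two scalar inequalities obtained this way gives
\[
\ps{\nabla f(x_*)-\nabla f(x),\,x_*-\overline x}+\ps{D(\tau)^{-1}(x-\overline x),\,x_*-\overline x}+\ps{D(\sigma)^{-1}(y-\overline y),\,y_*-\overline y}+T\le 0,
\]
where $T$ gathers all the bilinear terms in $M$ produced by $M^\star(2\overline y-y)$, $Mx$, $M^\star y_*$ and $Mx_*$; after cancellation, $T=\ps{\overline y-y_*,\,M(\overline x-x)}+\ps{\overline y-y,\,M(\overline x-x_*)}$.

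Finally I would expand $V(\overline z,z)+V(\overline z,z_*)-V(z,z_*)$ and verify it equals the sum of the last three terms above. On the $\tau^{-1}$- and $\sigma^{-1}$-weighted squared-norm parts, the polarization identity $\tfrac12\|a\|^2+\tfrac12\|b\|^2-\tfrac12\|a-b\|^2=\ps{a,b}$ (with $a=\overline x-x,\ b=\overline x-x_*$, resp. $a=\overline y-y,\ b=\overline y-y_*$) reproduces $\ps{D(\tau)^{-1}(x-\overline x),x_*-\overline x}$ and $\ps{D(\sigma)^{-1}(y-\overline y),y_*-\overline y}$, and expanding the three copies of the cross term $\ps{\cdot,M\,\cdot}$ of $V$ and collecting terms yields exactly $T$. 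Substituting into the displayed inequality gives the claim. The only delicate point is this last bookkeeping of bilinear terms: signs must be tracked carefully, and it is precisely the extrapolation $2\overline y-y=\overline y+(\overline y-y)$ that makes the cross terms telescope. Everything else (prox optimality, convexity, polarization) is routine; note in particular that convexity of $f$ is never used, $f$ entering only through the gradient inner product, which is carried unchanged to the conclusion.
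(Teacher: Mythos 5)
Your proposal is correct and follows essentially the same route as the paper's proof: the paper phrases the prox optimality as a three-point inequality and cancels the function values $g(\overline x),g(x_*),h^\star(\overline y),h^\star(y_*)$ via the saddle-point convexity inequalities, which is algebraically identical to your combination of subgradient inclusions, monotonicity, and polarization. Your bookkeeping of the bilinear term $T$ and its identification with the cross terms of $V(\overline z,z)+V(\overline z,z_*)-V(z,z_*)$ checks out, as does the observation that convexity of $f$ is not needed here.
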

\begin{proof}
  The inclusions~(\ref{eq:pd}) also read
  \begin{align*}
    &\forall u\in \cX,\  g(u)\geq g(x_*)+\ps{-\nabla f(x_*)-M^\star y_*,u-x_*}\\
    &\forall v\in \cY,\ h^\star(v)\geq h^\star(y_*)+\ps{Mx_*,v-y_*}\,.
  \end{align*}
  Setting $u=\overline x$ and $v=\overline y$ in the above
  inequalities, we obtain
  \begin{align}
    &  g(\overline x)\geq g(x_*)+\ps{\nabla f(x_*)+M^\star y_*,x_*-\overline x} \label{eq:star-x}\\
    & h^\star(\overline y)\geq h^\star(y_*)+\ps{Mx_*,\overline
      y-y_*}\,. \label{eq:star-y}
  \end{align}
By definition of the proximal operator,
  \begin{align}
    \overline y &= \arg\min_{v\in \cY} h^\star(v) - \ps{v,Mx}+\frac 12\|v-y\|^2_{\sigma^{-1}} \label{eq:bar-y} \\
    \overline x &= \arg\min_{u\in \cX} g(u) + \ps{u,\nabla
      f(x)+M^\star(2\overline y-y)}+\frac
    12\|u-x\|^2_{\tau^{-1}}\,. \label{eq:bar-x}
  \end{align}
  Consider Equality~(\ref{eq:bar-y}) above. It classically implies~\cite{tseng2008accelerated} that for any $v\in
  \cY$,
  \begin{equation}
  \label{eq:prox_hstar_ineq}
    h^\star(\overline y) - \ps{\overline y,Mx}+\frac 12\|\overline y-y\|^2_{\sigma^{-1}} \leq 
    h^\star(v) - \ps{v,Mx}+\frac 12\|v-y\|^2_{\sigma^{-1}}-\frac 12\|\overline y-v\|^2_{\sigma^{-1}}\,.
  \end{equation}
  Setting $v=y_*$, we obtain
\begin{equation}
\label{eq:ybar_leq_ystar}
h^\star(\overline y) \leq h^\star(y_*) + \ps{\overline y-y_*,Mx}+\frac
12\|y_*-y\|^2_{\sigma^{-1}}-\frac 12\|\overline
y-y_*\|^2_{\sigma^{-1}} - \frac 12\|\overline y-y\|^2_{\sigma^{-1}}
\end{equation}
and using~(\ref{eq:star-y}), we finally have
\begin{equation}
  \ps{M(x_*-x),\overline y-y_*}  \leq 
  \frac 12\|y_*-y\|^2_{\sigma^{-1}}-\frac 12\|\overline y-y_*\|^2_{\sigma^{-1}}  - \frac 12\|\overline y-y\|^2_{\sigma^{-1}}
  \label{eq:tmp-y}
\end{equation}
Similarly, Equality~(\ref{eq:bar-x}) implies that for any $u\in
\cX$,
\begin{multline}
\label{eq:prox_g_ineq}
  g(\overline x) + \ps{\overline x,\nabla f(x)+M^\star(2\overline y-y)}+\frac 12\|\overline x-x\|^2_{\tau^{-1}} \\
  \leq g(u) + \ps{u,\nabla f(x)+M^\star(2\overline y-y)}+\frac 12\|u-x\|^2_{\tau^{-1}}-\frac 12\|\overline x-u\|^2_{\tau^{-1}}.
\end{multline}
We set $u=x_*$. This yields
\begin{multline*}
  g(\overline x) \leq g(x_*) + \ps{x_*-\overline x,\nabla
    f(x)+M^\star(2\overline y-y)}+\frac
  12\|x_*-x\|^2_{\tau^{-1}} 
  -\frac 12\|\overline x-x_*\|^2_{\tau^{-1}}
  -\frac 12\|\overline x-x\|^2_{\tau^{-1}}\,.
\end{multline*}
Using moreover Inequality~(\ref{eq:star-x}), we obtain
\begin{multline*}
  \ps{\nabla f(x_*)+M^\star y_*,x_*-\overline x} \
  \leq \ps{x_*-\overline x,\nabla f(x)+M^\star(2\overline y-y)} 
  +\frac 12\|x_*-x\|^2_{\tau^{-1}}-\frac 12\|\overline x-x_*\|^2_{\tau^{-1}} -\frac 12\|\overline x-x\|^2_{\tau^{-1}}
\end{multline*}
hence, rearranging the terms,
\begin{multline*}
  \ps{\nabla f(x_*)-\nabla f(x),x_*-\overline x}-\frac 12\|x_*-x\|^2_{\tau^{-1}}+\frac 12\|\overline x-x_*\|^2_{\tau^{-1}} +\frac 12\|\overline x-x\|^2_{\tau^{-1}} 
  \leq \ps{2\overline y-y-y_*,M(x_*-\overline x)}\,.
\end{multline*}
Summing the above inequality with~(\ref{eq:tmp-y}),
\begin{align*}
  \ps{\nabla f(x_*)-\nabla f(x),&x_*-\overline x}+\frac 12\|\overline
  x-x\|^2_{\tau^{-1}}+\ps{\overline y-y,M(\overline x- x)} +\frac
  12\|\overline y-y\|^2_{\sigma^{-1}}
  \\
  \leq &\frac 12\|x-x_*\|^2_{\tau^{-1}}+ \ps{y-y_*,M(x-x_*)} + \frac
  12\|y-y_*\|^2_{\sigma^{-1}}  \\
  &\qquad-\frac 12\|\overline
  x-x_*\|^2_{\tau^{-1}} - \ps{\overline y-y_*,M(\overline x-x_*)}
  -\frac 12\|\overline y-y_*\|^2_{\sigma^{-1}}\,.
\end{align*}
This completes the proof of the lemma thanks to the definition of $V$.
\end{proof}

\subsection{Study of Algorithm~\ref{algo:casSeparable}}
\label{sec:separable}

We first prove Theorem~\ref{the:main} in the special case $m_1=\cdots = m_p = 1$.
In that case, Algorithm~\ref{algo:main} boils down to Algorithm~\ref{algo:casSeparable}.
We recall that in this case, the vector $\bs y_k^{(j)}$ is reduced to a single value $\bs y_k^{(j)}(i)\in \cY_j$ where $i$
is the unique index such that $M_{j,i}\neq 0$. We simply denote this value by $y_k^{(j)}$.

We denote by $\cF_k$ the filtration generated by the random variable (r.v.) $i_1,\cdots,i_k$.
We denote by $\bE_k(\,.\,)=\bE(\,.\,|\cF_k)$ the conditional expectation w.r.t. $\cF_k$.
\begin{lemma}
\label{lem:esperances}
  Let Assumptions~\ref{hyp:main}(\ref{hyp:main-convex},\ref{hyp:main-diff},\ref{hyp:main-unif}) hold true. Suppose $m_1=\cdots = m_p = 1$.
 Consider Algorithm~\ref{algo:casSeparable} and let $\gamma_1,\dots,\gamma_n, \revision{\gamma'_1, \dots, \gamma'_p}$ be arbitrary positive coefficients.
For every $k\geq 1$ and every  $\cF_k$-measurable pair of random variables $(X,Y)$ on $\cX\times \cY$,  
\begin{align*}
 & \bE_k(x_{k+1}) = \frac 1n \overline x_{k+1} + (1-\frac 1n)x_k\\
  &\bE_k(\|x_{k+1}-X\|^2_{\gamma}) = \frac 1n \|\overline x_{k+1}-X\|^2_{\gamma}+(1-\frac 1n)\|x_{k}-X\|^2_{\gamma}\\
&\bE_k(\|y_{k+1}-Y\|^2_{\gamma'}) =\frac 1n \|\overline y_{k+1}-Y\|^2_{\gamma'}+ (1-\frac 1n)\|y_{k}-Y\|^2_{\gamma'} \revisiontrois{- \frac{1}{n} \|\overline y_{k+1}-y_k\|^2_{D(1-\pi)\gamma'}}\\
&\bE_k (\ps{y_{k+1}-Y,M(x_{k+1}-X)}) =  \frac 1n \ps{\overline y_{k+1}-Y,M(\overline x_{k+1}-X)} \\ 
&+ (1-\frac 1n)\ps{y_k-Y,M(x_k-X)} 
\revisiontrois{-  \frac{1}{n}\ps{D(1-\pi)(\overline y_{k+1}-y_k),M(\overline x_{k+1}-x_k)}}\,.
\end{align*}
\end{lemma}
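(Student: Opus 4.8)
The plan is to exploit the fact that $\overline x_{k+1}$ and $\overline y_{k+1}$, being deterministic functions of $(x_k,y_k)$, are $\cF_k$-measurable, so that conditionally on $\cF_k$ the only randomness left is the index $i_{k+1}$, which by Assumption~\ref{hyp:main}(\ref{hyp:main-unif}) is uniform on $\{1,\dots,n\}$ and independent of $\cF_k$. Every identity will then follow by conditioning on the events $\{i_{k+1}=i\}$, $i=1,\dots,n$, each of probability $1/n$, and summing. Before doing so I would record the combinatorial fact specific to $m_1=\dots=m_p=1$: each $j$ has a unique index $i(j)$ with $M_{j,i(j)}\neq 0$, so $I(j)=\{i(j)\}$ and $j\in J(i)\iff i=i(j)$; since all $m_j\neq 0$, the family $(J(i))_{i=1}^n$ is a partition of $\{1,\dots,p\}$.

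First I would treat the three ``easy'' identities. On $\{i_{k+1}=i\}$ the update rule gives $x_{k+1}=x_k$ except on block $i$, where $x_{k+1}^{(i)}=\overline x_{k+1}^{(i)}$, and $y_{k+1}=y_k$ except on the blocks $j\in J(i)$, where $y_{k+1}^{(j)}=\overline y_{k+1}^{(j)}$. The first identity is then immediate blockwise. For the squared norms, on $\{i_{k+1}=i\}$ one has $\|x_{k+1}-X\|_\gamma^2=\|x_k-X\|_\gamma^2+\gamma_i(\|\overline x_{k+1}^{(i)}-X^{(i)}\|^2-\|x_k^{(i)}-X^{(i)}\|^2)$ and $\|y_{k+1}-Y\|_{\sigma^{-1}}^2=\|y_k-Y\|_{\sigma^{-1}}^2+\sum_{j\in J(i)}\sigma_j^{-1}(\|\overline y_{k+1}^{(j)}-Y^{(j)}\|^2-\|y_k^{(j)}-Y^{(j)}\|^2)$; averaging over $i$ and using the partition property to replace $\frac1n\sum_i\sum_{j\in J(i)}$ by $\frac1n\sum_{j=1}^p$ collapses the corrections to $\frac1n(\|\overline x_{k+1}-X\|_\gamma^2-\|x_k-X\|_\gamma^2)$ and $\frac1n(\|\overline y_{k+1}-Y\|_{\sigma^{-1}}^2-\|y_k-Y\|_{\sigma^{-1}}^2)$, which rearrange into the claimed formulas.

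The bilinear identity is the one requiring care, and I would handle it last. Since $m_j=1$, the $j$th block of $Mz$ is $M_{j,i(j)}z^{(i(j))}$, so $\ps{y-Y,M(x-X)}=\sum_{j=1}^p\ps{y^{(j)}-Y^{(j)},M_{j,i(j)}(x^{(i(j))}-X^{(i(j))})}$. The key point is that on $\{i_{k+1}=i\}$ the $j$th summand is ``refreshed'' (its $(x_k,y_k)$ value replaced by its $(\overline x_{k+1},\overline y_{k+1})$ value) exactly when $i(j)=i$, i.e.\ $j\in J(i)$: in that case \emph{both} factors carry the updated index, whereas for $j\notin J(i)$ both factors keep their old value. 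Hence on $\{i_{k+1}=i\}$, $\ps{y_{k+1}-Y,M(x_{k+1}-X)}=\ps{y_k-Y,M(x_k-X)}+\sum_{j\in J(i)}\big(\ps{\overline y_{k+1}^{(j)}-Y^{(j)},M_{j,i}(\overline x_{k+1}^{(i)}-X^{(i)})}-\ps{y_k^{(j)}-Y^{(j)},M_{j,i}(x_k^{(i)}-X^{(i)})}\big)$, and averaging over $i$ with the partition property collapses the sum into $\frac1n(\ps{\overline y_{k+1}-Y,M(\overline x_{k+1}-X)}-\ps{y_k-Y,M(x_k-X)})$, giving the last formula.

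I expect the only real obstacle to be the bookkeeping in this last step — namely justifying cleanly that the summand indexed by $j$ is refreshed if and only if the drawn primal index $i_{k+1}$ coincides with the unique element $i(j)$ of $I(j)$, and is untouched otherwise. This ``all-or-nothing'' coupling of $y^{(j)}$ with $x^{(i(j))}$ is precisely the feature that breaks down when some $m_j>1$, which is what motivates the more elaborate $\bs\cY$ construction used for the general algorithm. Everything else amounts to routine conditional-expectation computations.
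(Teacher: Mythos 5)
Your proposal is correct and follows essentially the same route as the paper: decompose each quantity blockwise, use that each primal block (and, since $m_j=1$, each dual block $j$ together with its unique partner $i(j)$) is refreshed with probability $1/n$, and observe for the bilinear term that both factors of the $j$th summand switch simultaneously. The paper phrases this by taking the conditional expectation of each summand directly rather than conditioning on $\{i_{k+1}=i\}$ and averaging, but the computation is the same.
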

\begin{proof}
  The first equality is immediate. 

Consider the second one.
$\bE_k(\|x_{k+1}-X\|^2_{\gamma}) = \sum_{i=1}^n \gamma_i\bE_k(\|x_{k+1}^{(i)}-X^{(i)}\|^2)$ which coincides
with $\sum_{i=1}^n \gamma_i(\frac 1n\|\overline x_{k+1}^{(i)}-X^{(i)}\|^2 + (1-\frac 1n)\|x_{k}^{(i)}-X^{(i)}\|^2)$
and the second equality is proved.

Similarly for the third equality,
$\bE_k(\|y_{k+1}-Y\|^2_{\gamma'}) = \sum_{j=1}^p \gamma'_j\bE_k(\|y_{k+1}^{(j)}-Y^{(j)}\|^2)$ and for every $j$,
\begin{align*}
\bE_k(\|y_{k+1}^{(j)}-Y^{(j)}\|^2) = \|y_k^{(j)} + &\pi_j (\overline y_{k+1}^{(j)} - y_k^{(j)})-Y^{(j)}\|^2 \bP(j\in \mathcal J(i_{k+1})) \\
&+\|y_{k}^{(j)}-Y^{(j)}\|^2 \bP(j\notin \mathcal J(i_{k+1})).
\end{align*}
As $j \in J(i_{k+1}) \Leftrightarrow i_{k+1} \in I(j)$, we get
\[
\bP(j\in J(i_{k+1})) = \bP(i_{k+1}\in I(j)) = \mathrm{card}(I(j))/n=1/n.
\]
From \eqref{eq:def_pi_j},
\[
\pi_j = \bP(j \in J(i_{k+1}) | j \in \mathcal J(i_{k+1})) = \frac{\bP(j \in J(i_{k+1}) \,\&\, j \in \mathcal J(i_{k+1}))}{\bP(j \in \mathcal J(i_{k+1}))} = \frac{\bP(j \in J(i_{k+1}))}{\bP(j \in \mathcal J(i_{k+1}))}
\]
and so 
\[
\bP(j \in \mathcal J(i_{k+1})) = \frac{1}{n \pi_j} = \frac{|\{i \,:\, j \in \mathcal J(i)\}|}{n}.
\]
We also have 
\begin{multline*}
\|y_k^{(j)} + \pi_j (\overline y_{k+1}^{(j)} - y_k^{(j)})-Y^{(j)}\|^2 
 = \pi_j \| \overline y_{k+1}^{(j)} - Y^{(j)}\|^2 + (1-\pi_j)  \|  y_k^{(j)} - Y^{(j)}\|^2 - \pi_j (1-\pi_j) \| \overline y_{k+1}^{(j)} - y_k^{(j)} \|^2
\end{multline*}
This leads to
\[
\bE_k(\|y_{k+1}^{(j)}-Y^{(j)}\|^2) =\frac 1n \|\overline y_{k+1}^{(j)}-Y^{(j)}\|^2+ (1-\frac 1n)\|y_{k}^{(j)}-Y^{(j)}\|^2 - \frac{1 - \pi_j}{n} \| \overline y_{k+1}^{(j)} - y_k^{(j)} \|^2.
\]
\revision{This proves the third equality.}

Consider the fourth equality.
Note that
\[
\ps{y_{k+1}-Y,M(x_{k+1}-X)} = \sum_{i=1}^n\sum_{j\in J(i)} \ps{y_{k+1}^{(j)}-Y^{(j)}, M_{j,i}(x_{k+1}^{(i)}-X^{(i)})}.
\]
For any pair $(i,j)$ such that $j\in J(i)$, the conditional expectation of each term in the sum is equal to
\begin{align*}
&\frac {1}n\ps{\pi_j\overline y_{k+1}^{(j)} + (1-\pi_j) y_k^{(j)}-Y^{(j)}, M_{j,i}(\overline x_{k+1}^{(i)}-X^{(i)})}
\\
&\quad + (\frac {1}{n\pi_j} - \frac 1n)\ps{\pi_j\overline y_{k+1}^{(j)} + (1-\pi_j) y_k^{(j)}-Y^{(j)}, M_{j,i}(x_{k}^{(i)}-X^{(i)})} \\
&\quad + (1 -\frac{1}{n\pi_j})\ps{ y_k^{(j)}-Y^{(j)}, M_{j,i}(x_{k}^{(i)}-X^{(i)})}\\
&= \frac {\pi_j}n\ps{\overline y_{k+1}^{(j)}-Y^{(j)}, M_{j,i}(\overline x_{k+1}^{(i)}-X^{(i)})}
+ (1-\frac 2n + \frac {\pi_j}n)\ps{y_{k}^{(j)}-Y^{(j)}, M_{j,i}(x_{k}^{(i)}-X^{(i)})} \\
&\; + (\frac 1n -\frac{\pi_j}{n})\ps{ y_k^{(j)}\!-\!Y^{(j)}, M_{j,i}(\overline x_{k+1}^{(i)}\!-\!X^{(i)})}+(\frac 1n -\frac{\pi_j}{n})\ps{ \overline y_{k+1}^{(j)}\!-\!Y^{(j)}, M_{j,i}(x_k^{(i)}\!-\!X^{(i)})} \\
& = \frac 1n \ps{\overline y_{k+1}^{(j)}-Y^{(j)}, M_{j,i}(\overline x_{k+1}^{(i)}-X^{(i)})}
+ (1-\frac 1n)\ps{y_{k}^{(j)}-Y^{(j)}, M_{j,i}(x_{k}^{(i)}-X^{(i)})}\\
&\; + (\frac 1n \!-\! \frac{\pi_j}{n})\ps{ y_k^{(j)}\!-\!\overline y_{k+1}^{(j)}, M_{j,i}(\overline x_{k+1}^{(i)}\!-\!X^{(i)})}+\gamma_j'(\frac 1n \!-\! \frac{\pi_j}{n})\ps{ \overline y_{k+1}^{(j)}\!-\!y_k^{(j)}, M_{j,i}(x_k^{(i)}\!-\!X^{(i)})} \\
& =  \frac 1n \ps{\overline y_{k+1}^{(j)}-Y^{(j)}, M_{j,i}(\overline x_{k+1}^{(i)}-X^{(i)})}
+ (1-\frac 1n)\ps{y_{k}^{(j)}-Y^{(j)}, M_{j,i}(x_{k}^{(i)}-X^{(i)})}\\
&\quad + (\frac 1n -\frac{\pi_j}{n})\ps{ y_k^{(j)}-\overline y_{k+1}^{(j)}, M_{j,i}(\overline x_{k+1}^{(i)}-x_k^{(i)})}
\end{align*}
Finally, we obtain
\begin{multline*}
\bE(\ps{y_{k+1}-Y,M(x_{k+1}-X)}) = \frac 1n \ps{\overline y_{k+1}-Y,M(\overline x_{k+1}-X)} \\ + (1-\frac 1n)\ps{y_k-Y,M(x_k-X)} 
-  \frac{1}{n}\ps{D(1-\pi)(\overline y_{k+1}-y_k),M(\overline x_{k+1}-x_k)}
\end{multline*}
which in turn implies the fourth equality in the Lemma.
\end{proof}

Assume that $\tau_i^{-1}>\beta_i$ for each $i \in \{1,\dots,n\}$. Define for every $z = (x,y)\in\cX\times \cY$,
\begin{align}
\label{eq:def-Vtilde}
\tilde V(z) = \tilde V(x,y) &:= \frac 12\|x\|^2_{\tau^{-1}-\beta}+\ps{D(2-\pi)y,Mx} +\frac 12\|y\|^2_{\sigma^{-1}(2-\pi)}
\,.
\end{align}
\begin{lemma}
\label{lem:inegSk} 
   Let Assumptions~\ref{hyp:main}(\ref{hyp:main-convex},\ref{hyp:main-diff},\ref{hyp:main-smooth},\ref{hyp:main-unif}) hold true. Suppose $m_1=\cdots = m_p = 1$ and assume that
$\tau^{-1}_i>\beta_i$ for each $i\in \{1,\dots,n\}$. Consider Algorithm~\ref{algo:casSeparable} and define for every $k\in \bN$,
\begin{equation}
S_{k,*} := f(x_k)-f(x_*)-\ps{\nabla f(x_*),x_k-x_*}\,.
\label{eq:Sk}
\end{equation}
Then the following inequality holds:
\begin{align}
  \label{eq:contraction}
  \bE_k\left[ S_{k+1,*}+V(z_{k+1}-z_*)\right] \leq (1-\frac 1n) S_{k,*}+V(z_{k}-z_*)-\frac 1n \tilde V(\overline z_{k+1}-z_k)
\end{align}
where $\overline z_{k+1} = (\overline x_{k+1},\overline y_{k+1})$.
\end{lemma}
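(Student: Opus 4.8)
The plan is to combine three ingredients already available in the excerpt: the block descent inequality for $f$ from Assumption~\ref{hyp:main}(\ref{hyp:main-smooth}), the conditional expectation identities of Lemma~\ref{lem:esperances}, and the deterministic estimate of Lemma~\ref{lem:lyap}. Throughout I would work along the trajectory of Algorithm~\ref{algo:casSeparable} and apply Lemma~\ref{lem:lyap} with $(x,y)=(x_k,y_k)$, so that the quantities $\overline x$, $\overline y$ there coincide with $\overline x_{k+1}$, $\overline y_{k+1}$, which are $\cF_k$-measurable.

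First I would bound $\bE_k[S_{k+1,*}]$. At iteration $k$ only the $i_{k+1}$-th block of $x_k$ is modified, and it is set to $\overline x_{k+1}^{(i_{k+1})}$, so $x_{k+1}=x_k+U_{i_{k+1}}(\overline x_{k+1}^{(i_{k+1})}-x_k^{(i_{k+1})})$. Applying Assumption~\ref{hyp:main}(\ref{hyp:main-smooth}) to this block and taking $\bE_k$, using $\bP(i_{k+1}=i)=1/n$, $\sum_i U_i(\overline x_{k+1}^{(i)}-x_k^{(i)})=\overline x_{k+1}-x_k$ and $\sum_i\beta_i\|\overline x_{k+1}^{(i)}-x_k^{(i)}\|^2=\|\overline x_{k+1}-x_k\|^2_\beta$, I would get
\[
\bE_k[f(x_{k+1})]\le f(x_k)+\frac1n\langle\nabla f(x_k),\overline x_{k+1}-x_k\rangle+\frac1{2n}\|\overline x_{k+1}-x_k\|^2_\beta .
\]
Inserting this together with the first identity of Lemma~\ref{lem:esperances} (which gives $\bE_k\langle\nabla f(x_*),x_{k+1}-x_*\rangle=\langle\nabla f(x_*),\frac1n\overline x_{k+1}+(1-\frac1n)x_k-x_*\rangle$) into the definition~(\ref{eq:Sk}) of $S_{k+1,*}$, and writing $f(x_k)-f(x_*)=S_{k,*}+\langle\nabla f(x_*),x_k-x_*\rangle$, the terms rearrange to $\bE_k[S_{k+1,*}]\le S_{k,*}+\frac1n\langle\nabla f(x_k)-\nabla f(x_*),\overline x_{k+1}-x_k\rangle+\frac1{2n}\|\overline x_{k+1}-x_k\|^2_\beta$. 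Then I would split $\overline x_{k+1}-x_k=(\overline x_{k+1}-x_*)-(x_k-x_*)$ in the inner product and use the convexity bound $S_{k,*}\le\langle\nabla f(x_k)-\nabla f(x_*),x_k-x_*\rangle$ to absorb the $(x_k-x_*)$ part into $-\frac1n S_{k,*}$, reaching
\[
\bE_k[S_{k+1,*}]\le\Big(1-\frac1n\Big)S_{k,*}+\frac1n\langle\nabla f(x_*)-\nabla f(x_k),x_*-\overline x_{k+1}\rangle+\frac1{2n}\|\overline x_{k+1}-x_k\|^2_\beta .
\]

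Next I would treat the quadratic part. Reading $V(\,\cdot\,,z_*)$ as the quadratic form $V$ evaluated at the difference to $z_*$, the last three identities of Lemma~\ref{lem:esperances} with $(X,Y)=(x_*,y_*)$ and $\gamma=\tau^{-1}$ add up to $\bE_k[V(z_{k+1},z_*)]=\frac1n V(\overline z_{k+1},z_*)+(1-\frac1n)V(z_k,z_*)$. Summing this with the preceding display, the contribution $\frac1n\big(\langle\nabla f(x_*)-\nabla f(x_k),x_*-\overline x_{k+1}\rangle+V(\overline z_{k+1},z_*)\big)$ is exactly what Lemma~\ref{lem:lyap} bounds by $\frac1n\big(V(z_k,z_*)-V(\overline z_{k+1},z_k)\big)$. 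The $V(z_k,z_*)$ terms then combine to $V(z_k,z_*)$, and $-\frac1n V(\overline z_{k+1},z_k)+\frac1{2n}\|\overline x_{k+1}-x_k\|^2_\beta=-\frac1n\tilde V(\overline z_{k+1},z_k)$ by the definition of $\tilde V$, which is precisely (\ref{eq:contraction}). The hard part will be the first step: since the coordinate update touches a single block of $x$, the descent inequality for the (possibly non-separable) $f$ has to be applied block-wise and only then averaged over the uniform choice of $i_{k+1}$, and one must notice that the bare convexity inequality $S_{k,*}\le\langle\nabla f(x_k)-\nabla f(x_*),x_k-x_*\rangle$ is exactly what turns the leftover $S_{k,*}$ into the contraction factor $1-1/n$; everything else is routine bookkeeping of the $1/n$ weights and recognition of the quadratic forms $V$ and $\tilde V$.
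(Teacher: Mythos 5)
Your proposal is correct and follows essentially the same route as the paper: both rest on Lemma~\ref{lem:lyap} applied at $(x_k,y_k)$, the conditional-expectation identities of Lemma~\ref{lem:esperances}, the block descent inequality of Assumption~\ref{hyp:main}(\ref{hyp:main-smooth}), and the convexity bound $S_{k,*}\leq\ps{\nabla f(x_k)-\nabla f(x_*),x_k-x_*}$. The only difference is bookkeeping order — you average the smoothness inequality over $i_{k+1}$ first to get a deterministic bound on $\bE_k[f(x_{k+1})]$ and fold the $\beta$-term into $\tilde V$ at the end, whereas the paper keeps $\frac{\beta_{i_{k+1}}}{2}\|x_{k+1}-x_k\|^2$ inside the expectation and uses $\bE_k(\tilde V(z_{k+1},z_k))=\frac 1n\tilde V(\overline z_{k+1},z_k)$ — which is algebraically equivalent.
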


\begin{proof}
We can write the relations of Lemma~\ref{lem:esperances} as
\begin{align*}
& \|\overline x_{k+1}-X\|^2_{\tau^{-1}} = n \bE_k(\|x_{k+1}-X\|^2_{\tau^{-1}}) - (n-1)\|x_{k}-X\|^2_{\tau^{-1}}\\
&\|\overline y_{k+1}-Y\|^2_{\sigma^{-1}} = n \bE_k(\|y_{k+1}-Y\|^2_{\sigma^{-1}}) - (n-1)\|y_{k}-Y\|^2_{\sigma^{-1}} + \|\overline y_{k+1}-y_k\|^2_{\sigma^{-1}(1-\pi)}\\
&\ps{\overline y_{k+1}-Y,M(\overline x_{k+1}-X)} = n \bE_k (\ps{y_{k+1}-Y,M(x_{k+1}-X)})    \\
& \qquad\qquad - (n-1)\ps{y_{k}-Y,M(x_{k}-X)} + \ps{D(1-\pi)(\overline y_{k+1}-y_k),M(\overline x_{k+1}-x_k)} \,.
\end{align*}

Choosing $Z=(X,Y)$, denoting
$z_k=(x_k,y_k)$ and $\overline z_k=(\overline x_k,\overline y_k)$,
we obtain 
\begin{multline}
V(\overline z_{k+1}-Z) = n\bE_k (V(z_{k+1}-Z))-n V(z_{k}-Z) + V(z_k-Z) \\
+ \frac 12 \|\overline y_{k+1} - y_k\|^2_{\sigma^{-1}(1-\pi)} + 
\ps{D(1-\pi)(\overline y_{k+1}-y_k),M(\overline x_{k+1}-x_k)} \,.\label{eq:EV}
\end{multline}
We shall denote
\begin{equation}
\label{eq:def-Rpi}
R_\pi = \frac 12 \|\overline y_{k+1} - y_k\|^2_{\sigma^{-1}(1-\pi)} + 
\ps{D(1-\pi)(\overline y_{k+1}-y_k),M(\overline x_{k+1}-x_k)}
\end{equation}
Let $z_* = (x_*,y_*)\in \cS$.  By Lemma~\ref{lem:lyap},
$$
\ps{\nabla f(x_*)-\nabla f(x_k),x_*-\overline x_{k+1}}+V(\overline
z_{k+1}-z_k) \leq V(z_k-z_*)-V(\overline z_{k+1}-z_*)\,.
$$
Identifying $Z$ in~(\ref{eq:EV}) to $z_*$ and $z_k$ successively, we
obtain
\begin{align*}
\ps{\nabla f(x_*)-\nabla f&(x_k),x_*-\overline x_{k+1}}+n\bE_k
(V(z_{k+1}-z_k)) 
\leq nV(z_{k}-z_*)- n\bE_k (V(z_{k+1}-z_*)) - 2 R_\pi
\end{align*}
Dividing both sides of the above inequality by $n$ and using that
$\overline x_{k+1} = n\bE_k(x_{k+1}) - (n-1)x_k$, we obtain
\begin{multline*}
\ps{\nabla f(x_*)-\nabla f(x_k),x_*-\bE_k(x_{k+1}) + (1-\frac
  1n)(x_k-x_*)}+\bE_k (V(z_{k+1}-z_k)) \\
   \leq V(z_{k}-z_*)- \bE_k
(V(z_{k+1}-z_*)) - \frac{2}{n} R_\pi\,.
\end{multline*}
Rearranging the terms,
\begin{align}
\label{eq:bound_with_gradients}
  \bE_k&\left[\ps{\nabla f(x_k)-\nabla
      f(x_*),x_{k+1}-x_k}+V(z_{k+1}-z_*)\right] \\ &\leq -\frac 1n
  \ps{\nabla f(x_k)-\nabla f(x_*),x_k-x_*}+V(z_{k}-z_*)-\bE_k
  (V(z_{k+1}-z_k)) -\frac{2}{n}  R_\pi \notag 
\end{align}
We now use Assumption~\ref{hyp:main}(\ref{hyp:main-smooth}), knowing
that $x_{k+1}$ only differs from $x_k$ along coordinate $i_{k+1}$
\begin{align}
\label{eq:taylor}
f(x_{k+1}) &\leq f(x_k)+\ps{\nabla f(x_k),x_{k+1}-x_k} +
\frac{\beta_{i_{k+1}}}2\|x_{k+1}-x_k\|^2 \notag \\
& = f(x_k)+\ps{\nabla f(x_k),x_{k+1}-x_k} +
\frac 12 \|x_{k+1}-x_k\|_\beta^2 
\end{align}
which implies that $\ps{\nabla f(x_k),x_{k+1}-x_k}\geq
f(x_{k+1})-f(x_k)-\frac 12\|x_{k+1}-x_k\|^2_\beta$. Thus, plugging 
this into \eqref{eq:bound_with_gradients},
\begin{align*}
  \bE_k\left[
    f(x_{k+1})-f(x_k)-\frac 12\|x_{k+1}-x_k\|^2_\beta-\ps{\nabla
      f(x_*),x_{k+1}-x_k}+V(z_{k+1}-z_*)\right] \\ \leq -\frac 1n
  \ps{\nabla f(x_k)-\nabla f(x_*),x_k-x_*}+V (z_{k}-z_*)-\bE_k
  (V(z_{k+1}-z_k)) - \frac{2}{n}  R_\pi\,.
\end{align*}
Introducing the quantity $S_{k, *}$ as in~(\ref{eq:Sk}), the inequality
simplifies to
\begin{align*}
  \bE_k\Big[ S_{k+1,*}+V(&z_{k+1}-z_*)
    -\frac 12\|x_{k+1}-x_k\|_\beta^2\Big] \\
    \leq & f(x_k)
  -f(x_*)-(1-\frac 1n)\ps{\nabla f(x_*),x_k-x_*} -\frac 1n \ps{\nabla
    f(x_k),x_k-x_*} \\
    &\qquad +V(z_{k}-z_*)-\bE_k (V(z_{k+1}-z_k)) - \frac{2}{n} R_\pi\,.
\end{align*}
An estimate of the right-hand side is obtained upon noticing that
$\ps{\nabla f(x_k),x_k-x_*}\geq f(x_k)-f(x_*)$. Therefore,
\begin{align*}
\bE_k\big[
  S_{k+1,*}+V(&z_{k+1}-z_*)-\frac 12\|x_{k+1}-x_k\|_\beta^2\big] 
\leq 
(1-\frac 1n)S_{k,*}+V(z_{k}-z_*)-\bE_k (V(z_{k+1}-z_k)) - \frac{2}{n}  R_\pi\,.
\end{align*}
Using Lemma~\ref{lem:esperances}, \eqref{eq:def-Vtilde} and \eqref{eq:def-Rpi}, it is immediate that 
\begin{align*}
\bE_k (V (z_{k+1}-z_k)& - \frac 12\|x_{k+1}-x_k\|_\beta^2) + \frac{2}{n}  R_\pi\\
&=\frac 1n V(\overline z_{k+1}-z_k)- \frac{1}{n} R_\pi 
- \frac{1}{2n}\norm{\overline x_{k+1}^{(j)} - x_k}^2_\beta + \frac{2}{n}  R_\pi \\
&= \frac 1n\tilde V(\overline z_{k+1}-z_k)
\end{align*} 
and the proof is complete.
\end{proof}

Recall that we denote by $\rho(A)$ the spectral radius of a matrix $A$.
\begin{lemma}
\label{lem:Vtilde-positive}
Suppose that $m_1=\cdots=m_p=1$ and assume that the following condition holds for every $i \in \{1,\dots,n\}$:
  \begin{equation}
    \label{eq:tau-separable}
    \tau_i < \frac 1{\beta_i+\rho\left(\sum_{j\in J(i)}(2-\pi_j)\sigma_j M_{j,i}^\star M_{j,i}\right)}\,.
  \end{equation}
Then $\tilde V^{1/2}$ is a norm on $\cX\times\cY$. 
\end{lemma}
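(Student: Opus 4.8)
The plan is to show that the quadratic form $\tilde V(x,y) = \frac12\|x\|^2_{\tau^{-1}-\beta} + \ps{y,Mx} + \frac12\|y\|^2_{\sigma^{-1}}$ is positive definite; since it is already a quadratic form, positive definiteness is equivalent to $\tilde V^{1/2}$ being a norm. First I would note that under condition~(\ref{eq:tau-separable}) we have $\tau_i^{-1} > \beta_i$, so the operator $D_{\cX}(\tau^{-1}-\beta)$ is positive definite, and writing $a = D_{\cX}(\tau^{-1}-\beta)^{1/2}x$ and completing the square in the $x$ variable, $\tilde V(x,y) = \frac12\|a + D_{\cX}(\tau^{-1}-\beta)^{-1/2}M^\star D_{\cY}(\sigma)^{-1/2}\cdot(\dots)\|^2 + \dots$ — more cleanly, $\tilde V(x,y) \ge \frac12\|y\|^2_{\sigma^{-1}} - \frac12\|M^\star y\|^2_{(\tau^{-1}-\beta)^{-1}}$ with equality achievable in $x$, so $\tilde V$ is positive definite if and only if $\|M^\star y\|^2_{(\tau^{-1}-\beta)^{-1}} < \|y\|^2_{\sigma^{-1}}$ for all $y \ne 0$, i.e. if and only if the operator $B \eqdef D_{\cY}(\sigma)^{1/2} M D_{\cX}(\tau^{-1}-\beta)^{-1} M^\star D_{\cY}(\sigma)^{1/2}$ has $\|B\| < 1$.

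The key step is then to bound $\|B\|$, or equivalently $\rho(B)$, using condition~(\ref{eq:tau-separable}) block by block. The crucial structural fact in the case $m_1=\cdots=m_p=1$ is that each row $j$ of $M$ has exactly one nonzero block $M_{j,i(j)}$, so $M D_{\cX}(\tau^{-1}-\beta)^{-1} M^\star$ is block-diagonal in $\cY$: its $(j,j)$ block is $(\tau_{i(j)}^{-1}-\beta_{i(j)})^{-1} M_{j,i(j)} M_{j,i(j)}^\star$ and off-diagonal blocks vanish because no two rows share a column index would not be needed — rather, rows $j,j'$ with $i(j)=i(j')$ still give a zero off-diagonal block since $M_{j,i}M_{j',i}^\star$ appears multiplied against distinct $\cY_j,\cY_{j'}$ coordinates... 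I would instead argue: for any $y\in\cY$,
\[
\|M^\star y\|^2_{(\tau^{-1}-\beta)^{-1}} = \sum_{i=1}^n \frac{1}{\tau_i^{-1}-\beta_i}\Big\|\sum_{j\in J(i)} M_{j,i}^\star y^{(j)}\Big\|^2 ,
\]
and since $m_1=\cdots=m_p=1$ means $J(i)$ indexes disjoint groups of rows, I can estimate each inner term. Specifically, for fixed $i$, letting $y_i = (y^{(j)})_{j\in J(i)}$, one has $\|\sum_{j\in J(i)} M_{j,i}^\star y^{(j)}\|^2 \le \rho\big(\sum_{j\in J(i)} m_j \sigma_j M_{j,i}^\star M_{j,i}\big)\cdot \sum_{j\in J(i)} \sigma_j^{-1}\|y^{(j)}\|^2$ — wait, with $m_j=1$ this is $\rho\big(\sum_{j\in J(i)}\sigma_j M_{j,i}^\star M_{j,i}\big)\sum_{j\in J(i)}\sigma_j^{-1}\|y^{(j)}\|^2$, which is the Cauchy–Schwarz / spectral bound for the operator $y_i\mapsto \sum_j M_{j,i}^\star y^{(j)}$ with the $\sigma^{-1}$-weighted norm on the domain. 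Summing over $i$ and using that the sets $\{J(i)\}$ tile $\{1,\dots,p\}$ (each $j$ has a unique $i$), I get
\[
\|M^\star y\|^2_{(\tau^{-1}-\beta)^{-1}} \le \sum_{i=1}^n \frac{\rho\big(\sum_{j\in J(i)}\sigma_j M_{j,i}^\star M_{j,i}\big)}{\tau_i^{-1}-\beta_i}\,\|y_i\|^2_{\sigma^{-1}} < \sum_{i=1}^n \|y_i\|^2_{\sigma^{-1}} = \|y\|^2_{\sigma^{-1}},
\]
where the strict inequality is exactly~(\ref{eq:tau-separable}), valid for $y\ne 0$.

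The main obstacle is getting the spectral bound in the middle step exactly right: I need the inequality $\|\sum_{j\in J(i)}M_{j,i}^\star y^{(j)}\|^2 \le \rho\big(\sum_{j\in J(i)}\sigma_j M_{j,i}^\star M_{j,i}\big)\sum_{j\in J(i)}\sigma_j^{-1}\|y^{(j)}\|^2$, which follows from the fact that the Gram-type operator $T_i \eqdef D_{\cX}(\;)^{\text{...}}$ — concretely, the operator $A_i:\bs\cY_i\to\cX_i$, $A_i(u)=\sum_{j\in J(i)} M_{j,i}^\star u^{(j)}$, satisfies $A_i A_i^\star = $ (block operator whose spectral radius equals that of $A_i^\star A_i = \sum_j M_{j,i}^\star M_{j,i}$ in the unweighted case), and a weighted version gives $\|A_i u\|^2 \le \rho(\sum_j \sigma_j M_{j,i}^\star M_{j,i})\|u\|^2_{\sigma^{-1}}$ by a change of variables $u^{(j)}\mapsto \sigma_j^{1/2}u^{(j)}$. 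Once that elementary linear-algebra lemma is in place, the rest is bookkeeping, and I would also record explicitly that $\tilde V$ being a positive definite quadratic form makes $\tilde V^{1/2}$ a genuine norm (homogeneity and the triangle inequality being automatic for the square root of a positive definite quadratic form).
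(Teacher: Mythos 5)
Your argument is correct and is essentially the paper's own proof: both reduce positive definiteness of the quadratic form $\tilde V$ to a Schur-complement condition (you by completing the square in $x$, the paper by invoking \cite[Theorem 7.7.6]{horn2012matrix}), and both then exploit the fact that $m_j=1$ makes the relevant operator decouple across the primal blocks, yielding exactly the per-block condition $\gamma_i\,\rho\bigl(\sum_{j\in J(i)}\sigma_j M_{j,i}^\star M_{j,i}\bigr)<1$ with $\gamma_i^{-1}=\tau_i^{-1}-\beta_i$. The only cosmetic difference is that the paper phrases the decoupling as block-diagonality of $R^\star R$ with $R=D(\sigma^{1/2})MD(\gamma^{1/2})$ on the primal side, whereas you work on the dual side using the tiling of $\{1,\dots,p\}$ by the sets $J(i)$; these are the same computation.
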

Note that under the assumptions of Lemma~\ref{lem:Vtilde-positive},
$V^{1/2}$ is also, \emph{a fortiori}, a norm, but that $V^{1/2}$ need not be a norm.
\begin{proof}
  Let $\gamma^{-1}=\tau^{-1}-\beta$. Denote by $\sigma_j' = (2-\pi_j)\sigma_j$ for all $j$ and by $D(\sigma')$ the
  diagonal matrix on $\cY\to\cY$ defined by $D(\sigma')(y) :=
  (\sigma'_1y^{(1)},\dots,\sigma'_py^{(p)})$ for every
  $y=(y^{(1)},\dots,y^{(p)})$. We define $D(\gamma)$ similarly on
  $\cX\to\cX$. By \cite[Theorem 7.7.6]{horn2012matrix}, a sufficient (and
  necessary) condition for $\tilde V$ to be a squared norm is that
  $D(\gamma^{-1})\succ M^\star D(\sigma') M$ (where notation $A \succ
  B$ means that $A-B$ is a positive definite matrix).  Defining
  $R=D({\sigma'}^{1/2})M D(\gamma^{1/2})$ (that is, $R_{j,i} =
  \sqrt{\gamma_i\sigma_j'}M_{j,i}$ for every $j,i$), the condition
  reads equivalently $\rho(R^\star R) <1$. As the set $I(j)$ is
  reduced to a unique element for all $j$, the matrix $R^\star R$ is
  (block) diagonal. Precisely, for any $1\leq i,\ell\leq n$, the
  $(i,\ell)$-component $(R^\star R)_{i,\ell}$ is zero whenever $i\neq
  \ell$ and is equal to $ (R^\star R)_{i,i} =
  \gamma_i\sum_{j\in J(i)}\sigma'_j M_{j,i}^\star M_{j,i}$ otherwise.
  The condition $\rho(R^\star R) <1$ yields $\gamma_i\rho\left(\sum_{j\in
      J(i)}\sigma_j' M_{j,i}^\star M_{j,i}\right)<1$ for each
  $i \in \{1,\dots,n\}$ which is in turn equivalent to \eqref{eq:tau-separable}.
\end{proof}

\begin{proof}[Proof of Theorem~1 in the case $m_1 = \ldots = m_p =1$]
Let $z_*$ be an arbitrary point in $\cS$.  Whenever
condition~(\ref{eq:tau-separable}) is met, the r.v. $V(z_k-z_*)$ and
$\tilde V(\overline z_{k+1}-z_k)$ are non-negative.  The
r.v. $S_{k,*}$ is non-negative as well by convexity of $f$. We review two
important consequences of Lemma~\ref{lem:inegSk}.

$\bullet$ Define $U_k:=S_{k,*} + V(z_{k}-z_*)$. A first consequence of Lemma~\ref{lem:inegSk} is that for all~$k$,
$$
\bE_k(U_{k+1})\leq U_{k}-\frac 1n S_{k,*}\,.
$$
Recalling that $U_k$ and $S_k$ are non-negative r.v., the Robbins-Siegmund Lemma \cite{robbins1971convergence}
implies that almost surely,
$\lim_{k\to\infty}U_k$ exists and $\sum_k S_{k,*}<\infty$. In particular, $S_{k,*}$ converges almost surely to zero.
By definition of $U_k$, this implies that $\lim_{k\to\infty}V(z_{k}-z_*)$ exists almost surely.
Following the argument of \cite[Prop. 9]{bertsekas2011incremental} (see also \cite{iutzeler2013asynchronous}, \cite[Prop. 2.3]{combettes2014stochastic}),
this implies that there exists an event $A$ of probability one such that 
for every $\omega\in A$ and every $\check z\in \cS$, $\lim_{k\to\infty} V^{1/2}(z_{k}(\omega)-\check z)$ exists.

$\bullet$ A second consequence of Lemma~\ref{lem:inegSk} is that, by
taking the expectation $\bE$ of both handsides
of~(\ref{eq:contraction}),
$$
\bE\left[ S_{k+1,*}+V(z_{k+1}-z_*)\right] \leq \bE[S_{k,*}+V(z_{k}-z_*)]-\frac 1n \bE(\tilde V(\overline z_{k+1}-z_k))
$$
and by summing these inequalities, we obtain 
\begin{equation}
\label{eq:sum_Vtilde_bounded}
0\leq
S_{0,*}+V(z_{0}-z_*) - \frac 1n \sum_{i=0}^{k} \bE(\tilde V(\overline
z_{i+1}-z_i)).
\end{equation}
  Thus $\bE(\sum_{i=0}^{\infty} \tilde V(\overline
z_{i+1}-z_i))<\infty$. The integrand is non-negative by
Lemma~\ref{lem:Vtilde-positive}.  It is therefore finite almost
everywhere.  In particular, the sequence $\tilde V(\overline
z_{k+1}-z_k)$ converges almost surely to zero. By
Lemma~\ref{lem:Vtilde-positive}, $\overline z_{k+1}-z_k$ converges to
zero almost surely. Say $\overline z_{k+1}(\omega)-z_k(\omega)\to 0$
for every $\omega\in B$ where $B$ is a probability event of
probability one.

We introduce the mapping $T:\cX\times\cY\to\cX\times\cY$ such that for any $(x,y)\in \cX\times\cY$,
the quantity $T(x,y)$ coincides with the couple $(\overline  x,\overline y)$ given by
  \begin{align*}
    \overline y &= \prox_{\sigma, h^\star}\big(y+D(\sigma) Mx\big)\\
    \overline x &= \prox_{\tau,g}\big(x-D(\tau)\nabla f(x)-D(\tau)
    M^\star (2\overline y-y)\big) \,.
  \end{align*}
With this definition, $\overline z_{k+1} =T(z_k)$. By non-expansiveness of the proximity operator, it
is straightforward to show that $T$ is continuous. It is also straightforward to verify that its
set of fixed points coincides with~$\cS$.

From now on to the end of this paragraph, we select a fixed $\omega\in A\cap B$.
Note that $z_{k}(\omega)$ is a bounded sequence. Let $\tilde z$ be a cluster point of the latter.
We have shown that $T(z_{k}(\omega))-z_k(\omega)\to 0$ which implies that $T(\tilde z)-\tilde z=0$ by
continuity of $T$. Thus, $\tilde z\in \cS$. This implies that
$\lim_{k\to\infty} V^{1/2}(z_{k}(\omega)-\tilde z)$ exists.
Since $V^{1/2}(z_{k}(\omega)-\tilde z)$ tends to zero at least on some subsequence, we conclude that 
$\lim_{k\to\infty} V^{1/2}(z_{k}(\omega)-\tilde z)=0$. Otherwise stated, the sequence $z_{k}(\omega)$ converges
to some point $\tilde z\in \cS$.
This completes the proof of Theorem~\ref{the:main} in the case $m_1=\dots=m_p=1$.
\end{proof}

\subsection{General Case}
\label{sec:general_case}

For every $j \in \{1,\dots,p\}$, $\bs\cY_j=\cY_j^{I(j)}$ is equipped with the 
inner product $\ps{\bs u,\bs v} = \sum_{i\in I(j)}\ps{\bs u(i),\bs v(i)}$.
\revision{The space $\bs\cY_j$ stores $I(j)$ duplicates of the original problem's $j$th dual variable $y_j$.}
We introduce the averaging operator $S_j:\bs\cY_j\to \cY_j$  defined for every ${\bs u}\in\bs\cY_j$ by
$$
S_j({\bs u}) := \frac 1{m_j}\sum_{i\in I(j)}{\bs u}(i)\,.
$$
\revision{The averaging operators allows us to come back from duplicated dual variables to actual dual variables.}
For any $u\in \cY_j$, we denote by ${\bs 1}_{m_j}\otimes u = (u,\dots,u)$ the vector of $\bs\cY_j$ whose components all coincide with~$u$.

We introduce the linear operator $K_j:\cX\to \bs\cY_j$ by 
$$
K_j(x) = (M_{j,i}(x^{(i)})\,:\,i\in I(j))
$$
The operators $S:\bs\cY\to\cY$, 
$K:\cX\to\bs \cY$ are respectively defined by
$S(\bs y) := (S_1(\bs y^{(1)}),\dots,S_p({\bs y}^{(p)}))$
and $K(x) := (K_1(x),\dots,K_p(x))$. 
It is immediate to verify that
\begin{equation}
M = D(m)SK
\label{eq:M=mSK}
\end{equation}
where $m=(m_1,\dots,m_p)$.
In order to have some insights, the following example illustrates the construction of $K$ for a given $M$.
\begin{example}
  Let $\cX=\cY=\bR^3$ and define $M:\cX\to\cY$ as the $3\times 3$ matrix
$$
M =
\begin{pmatrix}
  M_{1,1} & M_{1,2} & 0 \\
  0 & M_{2,2} & 0 \\
  M_{3,1} & M_{3,2} & M_{3,3}
\end{pmatrix}\,.
$$
Here, $I(1)=\{1,2\}$ is the set of non-zero coefficients of the first row of $M$ and it cardinal is $m_1=2$. Similarly $m_2=1$, $m_3=3$ and $\bs \cY=\bR^6$.
Then $K:\bR^3\to \bR^6$ coincides with the matrix
$$
K =
\begin{pmatrix}
  M_{1,1} & 0 & 0 \\
  0 & M_{1,2} & 0 \\
  0 & M_{2,2} & 0 \\
  M_{3,1} & 0 & 0 \\
  0 & M_{3,2} & 0 \\
  0 & 0 & M_{3,3}
\end{pmatrix}
$$
and each row of $K$ contains exactly one non-zero coefficient.
 On the other hand, $S$ and $D(m)$ respectively coincide with 
$$
S =
\begin{pmatrix}
  \frac 12 & \frac 12 & 0 & 0 & 0 & 0 \\
0 & 0 & 1 &0 &0 &0 \\
0&0&0&\frac 13&\frac 13&\frac 13
\end{pmatrix}
\qquad\text{ and }\qquad 
D(m) =
\begin{pmatrix}
  2 & 0 & 0 \\
  0&1&0\\
  0&0&3
\end{pmatrix}
$$
and obviously $D(m)SK=M$.
\end{example}

We define the function $\overline h := h\circ (D(m)S)\,.$
By~(\ref{eq:M=mSK}), Problem~(\ref{eq:pb}) is equivalent to
\begin{equation}
\min_{x\in \cX} f(x)+g(x)+\overline h(Kx)\,.
\label{eq:pb2}
\end{equation}
We denote by $\bs \cS$ the set of primal-dual solutions of the above problem \emph{i.e.}, the set
of pairs $(x_*,\bs y_*)\in \cX\times\bs\cY$ satisfying
\begin{align*}
  0&\in \nabla f(x_*)+\partial g(x_*)+K^\star \bs y_* \\
  0&\in -Kx_*+\partial \overline h^\star(\bs y_*)\,.
\end{align*}
Substituting $M$ with $K$, we may now apply Algorithm~\ref{algo:casSeparable} to~\eqref{eq:pb2}.
For a fixed parameter $\sigma = (\sigma_1,\dots,\sigma_p)$, we define $\tilde\sigma_j:= m_j\sigma_j$ and we define
$\tilde\sigma\in \bR^{\sum_{j=1}^pm_j}$ as the vector
$\tilde \sigma := (\tilde \sigma_1{\bs 1}_{m_1},\dots, \tilde \sigma_p{\bs 1}_{m_p})$ where $\bs 1_{m_j}$
is a vector of size $m_j$ whose components are all equal to one. Algorithm~\ref{algo:casSeparable} writes

  \noindent {\bf Initialization}: Choose $x_0\in \cX$, $\bs y_0\in \bs\cY$.

  \noindent {\bf Iteration $k$}: Define:
  \begin{align}
    \overline {\bs y}_{k+1} &= \prox_{\tilde \sigma,\overline h^\star}\big(\bs y_k+D(\tilde \sigma) Kx_k\big) \label{eq:algo-brute-1}\\
    \overline x_{k+1} &= \prox_{\tau,g}\Big(x_k-D(\tau)\big(\nabla f(x_k)+K^\star (2\overline {\bs y}_{k+1}-\bs y_k)\big)\Big)\,. \label{eq:algo-brute-2}
  \end{align}
  For $i=i_{k+1}$ and for each $(l,j)\in \mathcal J(i_{k+1}) $, update:
  \begin{align}
    &{x}_{k+1}^{(i)} = \overline {x}_{k+1}^{(i)} \\
    & \bs y_{k+1}^{(j)}(l) = {\bs y}_{k}^{(j)}(l) + \bs \pi_j(l) (\overline  {\bs y}_{k+1}^{(j)}(l) - {\bs y}_{k+1}^{(j)}(l))\,. \label{eq:algo-brute-4}
  \end{align}
  Otherwise, set ${x}_{k+1}^{(i)}=x_k^{(i)}$, $\bs y_{k+1}^{(j)}(l)={\bs y}_{k}^{(j)}(l)$.


Using the result of the Section~\ref{sec:separable} and the properties of $K$, 
the sequence $(x_k,\bs y_k)$ converges almost surely to a primal-dual point of Problem~(\ref{eq:pb2}),
provided that such a point exists and that the following condition holds:
$$
 \tau_i < \frac 1{\beta_i\!+\!\rho\Big(\!\!\!\!\!\!\!\displaystyle\sum_{(l,j)\in \{i\}\times J(i)}\!\!\!\!\!\!\!\!\!(2-\bs \pi_j(l))\tilde\sigma_j K_{(l,j),i}^\star K_{(l,j),i}\Big)} = \frac 1{\beta_i\!+\!\rho\Big(\!\!\displaystyle\sum_{j\in J(i)}\!\!(2-\bs \pi_j(i))\tilde\sigma_j M_{j,i}^\star M_{j,i}\Big)}
$$
which is equivalent to \eqref{eq:tau_algo2}. 
It remains to prove that the algorithm given by the iterations~(\ref{eq:algo-brute-1})--(\ref{eq:algo-brute-4})
coincides with Algorithm~\ref{algo:main}. To that end, we need the following Lemma.
\begin{lemma}
\label{lem:hdiff}
For any $\bs y\in \bs\cY$, 
\[
\prox_{\tilde \sigma,\overline h^\star}(\bs y) = ({\bs 1}_{m_1}\otimes \prox_{\sigma,h^\star}^{(1)}(S(\bs y)),\dots, {\bs 1}_{m_p}\otimes \prox_{\sigma,h^\star}^{(p)}(S(\bs y)))\,.
\]
\end{lemma}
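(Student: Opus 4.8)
\subsection*{Proof plan}

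The plan is to first compute the Fenchel conjugate $\overline h^\star$ explicitly, and then reduce the proximity computation to a per-block completion of squares.

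\textbf{Step 1: the conjugate of $\overline h$.} By definition, for $\bs w\in\bs\cY$,
$$
\overline h^\star(\bs w) = \sup_{\bs v\in\bs\cY}\ \langle \bs w,\bs v\rangle - h\big(D(m)S\bs v\big)\,.
$$
The key observation is that $D(m)S\bs v$ depends on each block $\bs v^{(j)}$ only through the sum $\sum_{i\in I(j)}\bs v^{(j)}(i)$, which equals $m_jS_j\bs v^{(j)}$, i.e.\ the $j$th coordinate of $D(m)S\bs v$. I would therefore split the supremum by first fixing the values $a_j:=\sum_{i\in I(j)}\bs v^{(j)}(i)$: on each such slice, the objective is affine in the remaining degrees of freedom of $\bs v^{(j)}$ and is bounded above if and only if the vectors $(\bs w^{(j)}(i))_{i\in I(j)}$ all coincide; when they do, with common value $\hat w^{(j)}$, the value over that slice is $\sum_j\langle \hat w^{(j)},a_j\rangle - h(a)$. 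Taking the supremum over $a=(a_j)_j$ then gives $h^\star(\hat w)$. Hence $\overline h^\star(\bs w)=h^\star(S\bs w)$ when $\bs w$ is block-constant (meaning $\bs w^{(j)}(i)$ does not depend on $i\in I(j)$, in which case $S\bs w$ recovers precisely the common values), and $\overline h^\star(\bs w)=+\infty$ otherwise.

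\textbf{Step 2: reduction of the prox.} Since $\overline h^\star$ is $+\infty$ outside the subspace $C$ of block-constant vectors, the minimizer defining $\prox_{\tilde\sigma,\overline h^\star}(\bs y)$ lies in $C$; parametrize $\bs w=({\bs 1}_{m_1}\otimes\hat w^{(1)},\dots,{\bs 1}_{m_p}\otimes\hat w^{(p)})$ by $\hat w=S\bs w\in\cY$. Using $\tilde\sigma_j=m_j\sigma_j$ and expanding $\sum_{i\in I(j)}\|\hat w^{(j)}-\bs y^{(j)}(i)\|^2$ around its mean $S_j\bs y^{(j)}$ (the cross term vanishes), one gets $\frac12\|\bs w-\bs y\|^2_{\tilde\sigma^{-1}}=\frac12\|\hat w-S\bs y\|^2_{\sigma^{-1}}+c(\bs y)$ with $c(\bs y)$ independent of $\hat w$. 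Combined with Step~1, the minimization becomes
$$
\arg\min_{\hat w\in\cY}\ h^\star(\hat w)+\frac12\|\hat w-S\bs y\|^2_{\sigma^{-1}} \ =\ \prox_{\sigma,h^\star}(S\bs y)\,.
$$
Mapping back to $\bs w$ yields $\prox_{\tilde\sigma,\overline h^\star}(\bs y)=({\bs 1}_{m_1}\otimes\prox_{\sigma,h^\star}^{(1)}(S\bs y),\dots,{\bs 1}_{m_p}\otimes\prox_{\sigma,h^\star}^{(p)}(S\bs y))$, which is the claim.

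I expect the only delicate point to be Step~1: one must argue carefully that the inner supremum over the ``non-sum'' directions of $\bs v^{(j)}$ is $+\infty$ unless the coefficients $\bs w^{(j)}(i)$ agree across $i\in I(j)$, and it is worth stressing that no qualification hypothesis is needed here because the conjugate is computed directly from the definition rather than through a composition-of-conjugates rule. Everything else is routine bookkeeping with the block structure together with the completion of squares in Step~2.
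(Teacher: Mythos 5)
Your proof is correct. Step~1 (computing $\overline h^\star$ directly from the definition: $+\infty$ off the block-constant subspace, $h^\star\circ S$ on it) is essentially identical to the paper's first half, including the observation that $h(D(m)S\bs v)$ depends on $\bs v^{(j)}$ only through $m_jS_j\bs v^{(j)}$ and that the affine supremum over the complementary directions blows up unless the $\bs w^{(j)}(i)$ agree. Where you diverge is the second half: you stay with the variational definition of $\prox_{\tilde\sigma,\overline h^\star}$, restrict the minimization to the block-constant subspace, and use the parallel-axis identity $\sum_{i\in I(j)}\|\hat w^{(j)}-\bs y^{(j)}(i)\|^2 = m_j\|\hat w^{(j)}-S_j\bs y^{(j)}\|^2 + c_j(\bs y)$ to reduce to $\arg\min_{\hat w} h^\star(\hat w)+\frac12\|\hat w-S\bs y\|^2_{\sigma^{-1}}$. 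The paper instead characterizes $\partial\overline h^\star$ (namely $\bs u\in\partial\overline h^\star(\bs\varphi)\Leftrightarrow D(m)S(\bs u)\in\partial h^\star(\varphi)$) and rewrites the prox inclusion $D(\tilde\sigma^{-1})(\bs y-\bs q)\in\partial\overline h^\star(\bs q)$ using $D(m)SD(\tilde\sigma^{-1})=D(\sigma^{-1})S$. Your route is more elementary (no subdifferential calculus) and makes it transparent why $\tilde\sigma_j=m_j\sigma_j$ is exactly the right scaling, since the factor $m_j$ from duplicating the dual variable cancels against $\tilde\sigma_j^{-1}$ in the completion of squares. The one thing the paper's detour buys is the explicit subdifferential equivalence, which it reuses later in the proof that $\bs\cS=G(\cS)$; with your approach that equivalence would still have to be extracted from the formula for $\overline h^\star$ (or argued separately) at that point.
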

\begin{proof}
We have $\overline h(\bs y) =h(m_1S_1(\bs y^{(1)}),\dots,m_pS_p(\bs y^{(p)}))$. Thus,
$$
\overline h^\star(\bs \varphi) = \sup_{\bs y\in \bs\cY} \ps{\bs\varphi,\bs y} - h(m_1S_1(\bs y^{(1)}),\dots,m_pS_p(\bs y^{(p)}))
$$
For all $j \in \{1,\dots,p\}$, denote by $\bs\cC_j$ the subset of $\bs\cY_j$ formed by the vectors of the form
$(u,\dots,u)$ for some $u\in \cY_j$, and define $\bs\cC = \bs\cC_1\times\cdots\times \bs\cC_p$.
 Clearly, $\overline h^\star(\bs \varphi)=+\infty$ whenever $\bs \varphi\notin \bs\cC$
and $\partial \overline h^\star(\bs\varphi)=\emptyset$ in that case.
If on the other hand $\bs \varphi\in \bs\cC$, one can write 
$\bs\varphi$ under the form $\bs \varphi=({\bs 1}_{m_1}\otimes \varphi^{(1)},\dots,{\bs 1}_{m_p}\otimes \varphi^{(p)})$
for some $\varphi\in \cY$. In that case,
\begin{align*}
  \overline h^\star(\bs \varphi) &= \sup_{y\in \cY}  \sum_{j=1}^p\ps{{\bs 1}_{m_j}\otimes \varphi^{(j)},{\bs 1}_{m_j}\otimes y^{(j)}} - h(m_1y^{(1)},\dots,m_py^{(p)})\\
&= \sup_{y\in \cY}  \sum_{j=1}^p\ps{\varphi^{(j)}, m_jy^{(j)}} - h(m_1y^{(1)},\dots,m_py^{(p)})\ = h^\star(\varphi)\,.
\end{align*}
Then, $\bs u\in \partial\overline h^\star(\bs \varphi)$ if and only if for every $\psi\in \cY$,
$h^\star(\psi)\geq h^\star(\varphi) + \sum_{j=1}^p\ps{\bs u^{(j)},{\bs 1}_{m_j}\otimes(\psi^{(j)}-\varphi^{(j)})}$
or equivalently,
$
h^\star(\psi)\geq h^\star(\varphi) + \sum_{j=1}^p\ps{ m_jS_j(\bs u^{(j)}),\psi^{(j)}-\varphi^{(j)}}\,.
$
Therefore, $\bs u\in \partial\overline h^\star(\bs \varphi)$ if and only if 
$D(m)S(\bs u) \in \partial h^\star(\varphi)$.

Now consider an arbitrary $\bs y\in \bs\cY$ and set 
 $\bs q=\prox_{\tilde \sigma,\overline h^\star}(\bs y)$.
This is equivalent to 
\begin{equation}
D(\tilde\sigma^{-1})(\bs y -\bs q)\in \partial \overline h^\star(\bs q).\label{eq:prox-op}
\end{equation}
In particular, $\bs q\in \dom(\partial {\overline h}^\star)$ and thus $\bs q$ has the form $\bs q =({\bs 1}_{m_1}\otimes q^{(1)},\dots, {\bs 1}_{m_p}\otimes q^{(p)})$
for some $q\in \cY$. The inclusion~(\ref{eq:prox-op}) reads 
$D(m)SD(\tilde\sigma^{-1})(\bs y- \bs q))\in \partial  h^\star(q)$. 
Since $D(m)SD(\tilde\sigma^{-1})=D(\sigma^{-1})S$, we obtain 
$D(\sigma^{-1})(S(\bs y)-q)\in \partial  h^\star(q)$ which is equivalent to
$q=\prox_{\sigma,h^\star}(S(\bs y))$. This completes the proof.
\end{proof}

The proof of the following Lemma is immediate.
\begin{lemma}
\label{lem:adjointK}
For any $\bs y\in \bs\cY$, 
\[
K^\star(\bs y) = (\sum_{j\in J(1)} M_{j1}^\star (\bs y^{(j)}(1)),\dots,\sum_{j\in J(n)} M_{jn}^\star (\bs y^{(j)}(n))).
\]
In particular, for any $y\in \cY$,
$$
K^\star({\bs 1}_{m_1}\otimes y^{(1)},\dots,{\bs 1}_{m_p}\otimes y^{(p)} ) = M^\star y\,.
$$
\end{lemma}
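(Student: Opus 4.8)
The plan is to verify the formula for $K^\star$ directly from the defining relation $\ps{Kx,\bs y}=\ps{x,K^\star\bs y}$, valid for all $x\in\cX$ and $\bs y\in\bs\cY$, and then to specialise. First I would expand the left-hand side using the definition $K_j(x)=(M_{j,i}(x^{(i)}):i\in I(j))$ together with the inner product on $\bs\cY$, namely $\ps{\bs u,\bs v}=\sum_{j=1}^p\sum_{i\in I(j)}\ps{\bs u^{(j)}(i),\bs v^{(j)}(i)}$. This gives
\[
\ps{Kx,\bs y}=\sum_{j=1}^p\sum_{i\in I(j)}\ps{M_{j,i}(x^{(i)}),\bs y^{(j)}(i)}=\sum_{j=1}^p\sum_{i\in I(j)}\ps{x^{(i)},M_{j,i}^\star(\bs y^{(j)}(i))},
\]
where $M_{j,i}^\star:\cY_j\to\cX_i$ is the block adjoint of $M_{j,i}$.

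Next I would interchange the order of summation, invoking the elementary equivalence $i\in I(j)\Leftrightarrow j\in J(i)$ recorded in the notation section, to rewrite the double sum as $\sum_{i=1}^n\sum_{j\in J(i)}\ps{x^{(i)},M_{j,i}^\star(\bs y^{(j)}(i))}=\sum_{i=1}^n\ps{x^{(i)},\,\sum_{j\in J(i)}M_{j,i}^\star(\bs y^{(j)}(i))}$. Since this equals $\ps{x,K^\star\bs y}$ for every $x\in\cX$, the $i$th block of $K^\star\bs y$ must be $\sum_{j\in J(i)}M_{j,i}^\star(\bs y^{(j)}(i))$, which is exactly the claimed formula.

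For the ``in particular'' statement I would substitute $\bs y=({\bs 1}_{m_1}\otimes y^{(1)},\dots,{\bs 1}_{m_p}\otimes y^{(p)})$, so that $\bs y^{(j)}(i)=y^{(j)}$ for every $i\in I(j)$. The $i$th block of $K^\star\bs y$ then reads $\sum_{j\in J(i)}M_{j,i}^\star(y^{(j)})$, and since $M_{j,i}=0$ (hence $M_{j,i}^\star=0$) whenever $j\notin J(i)$, this coincides with $\sum_{j=1}^p M_{j,i}^\star(y^{(j)})$, that is, with the $i$th block of $M^\star y$. Hence $K^\star\bs y=M^\star y$.

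There is essentially no obstacle here: the whole argument is a bookkeeping exercise with block decompositions and the definition of the adjoint. The only point requiring a little care is the interchange of the two summations, which is legitimate precisely because of the symmetry $i\in I(j)\Leftrightarrow j\in J(i)$; everything else is the routine manipulation of block adjoints, which is why the statement is labelled immediate.
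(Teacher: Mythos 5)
Your argument is correct and is exactly the routine adjoint computation that the paper omits, stating only that the lemma is ``immediate''. Both the interchange of summations via $i\in I(j)\Leftrightarrow j\in J(i)$ and the observation that $M_{j,i}^\star=0$ for $j\notin J(i)$ are handled properly, so nothing is missing.
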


\revisiontrois{
The following example shows how we are going to use the concept of duplication.
\begin{example}[Total variation]

Let us consider $\cX = \mathbb R^{n_1 \times n_2 \times n_3}$, $\cY = \mathbb R^{3 \times n_1 \times n_2 \times n_3}$ 
and the total variation regularizer defined as $h \circ M$ where
\[
h(y) = \sum_{i_1=1}^{n_1}  \sum_{i_2=1}^{n_2} \sum_{i_3=1}^{n_3} \sqrt{\sum_{j=1}^3 y_{j,i_1,i_2,i_3}^2}= \sum_{i_1=1}^{n_1}  \sum_{i_2=1}^{n_2} \sum_{i_3=1}^{n_3}  h_{i_1,i_2,i_3}(y_{:,i_1,i_2,i_3})
\]
 and $M$ defined by blocks of
the type
\begin{align*}
M_{(i_1,i_2,i_3)} =
\begin{pmatrix}
  (x_{i_1,i_2,i_3}) & (x_{i_1+1,i_2,i_3}) & (x_{i_1,i_2+1,i_3}) & (x_{i_1,i_2,i_3+1})\\
  -1 & 1 & 0 & 0 \\
  -1 & 0 & 1 & 0 \\
  -1 & 0 & 0 & 1
\end{pmatrix} &
\begin{matrix}
\\
(y_{1, i_1,i_2,i_3})\\
(y_{2, i_1,i_2,i_3})\\
(y_{3, i_1,i_2,i_3})\\
\end{matrix}
\end{align*}
Each line has two nonzero elements so we duplicate dual variables as
\begin{align*}
K_{(i_1,i_2,i_3)} =
\begin{pmatrix}
  (x_{i_1,i_2,i_3}) & (x_{i_1+1,i_2,i_3}) & (x_{i_1,i_2+1,i_3}) & (x_{i_1,i_2,i_3+1}) \\
  -1 & 0 & 0 & 0 \\
   0 & 1 & 0 & 0 \\
  -1 & 0 & 0 & 0 \\
   0 & 0 & 1 & 0 \\
  -1 & 0 & 0 & 0 \\
   0 & 0 & 0 & 1
\end{pmatrix}&
\begin{matrix}
\\
(\bs y_{1, i_1,i_2,i_3}(1))\\
(\bs y_{1, i_1,i_2,i_3}(2))\\
(\bs y_{2, i_1,i_2,i_3}(1))\\
(\bs y_{2, i_1,i_2,i_3}(2))\\
(\bs y_{3, i_1,i_2,i_3}(1))\\
(\bs y_{3, i_1,i_2,i_3}(2))\\
\end{matrix}
\end{align*}
Hence, we cam write 
$\bar{h}_{i_1,i_2,i_3}(\bs y_{i_1,i_2,i_3,:}) =\sqrt{\sum_{j=1}^3(\bs y_{i_1,i_2,i_3,j}(1) +\bs y_{i_1,i_2,i_3,j}(2) )^2}$

$
\prox_{m\sigma,\overline h^*}(\boldsymbol y) = ({\boldsymbol 1}_{m_1}\otimes \prox_{\sigma,h^*}^{(1)}(S(\boldsymbol y)),\dots, {\boldsymbol 1}_{m_p}\otimes \prox_{\sigma,h^*}^{(p)}(S(\boldsymbol y)))
$
becomes, denoting $e_l$ the $l$th coordinate vector, 
\[
\prox_{2\sigma,\overline h^*_{i_1,i_2,i_3}}(\boldsymbol y_{i_1,i_2,i_3,:}) = \begin{pmatrix}
e_1^\top \prox_{\sigma, h_{i_1,i_2,i_3}^*}(\bs y_{i_1,i_2,i_3,:}(1) + \bs y_{i_1,i_2,i_3,:}(2)) \\[-0.3ex]
e_1^\top \prox_{\sigma, h_{i_1,i_2,i_3}^*}(\bs y_{i_1,i_2,i_3,:}(1) + \bs y_{i_1,i_2,i_3,:}(2)) \\[-0.3ex]
e_2^\top \prox_{\sigma, h_{i_1,i_2,i_3}^*}(\bs y_{i_1,i_2,i_3,:}(1) + \bs y_{i_1,i_2,i_3,:}(2)) \\[-0.3ex]
e_2^\top \prox_{\sigma, h_{i_1,i_2,i_3}^*}(\bs y_{i_1,i_2,i_3,:}(1) + \bs y_{i_1,i_2,i_3,:}(2)) \\[-0.3ex]
e_3^\top \prox_{\sigma, h_{i_1,i_2,i_3}^*}(\bs y_{i_1,i_2,i_3,:}(1) + \bs y_{i_1,i_2,i_3,:}(2)) \\[-0.3ex]
e_3^\top \prox_{\sigma, h_{i_1,i_2,i_3}^*}(\bs y_{i_1,i_2,i_3,:}(1) + \bs y_{i_1,i_2,i_3,:}(2)) 
\end{pmatrix}
\]

Suppose we would like to update $x_{3,4,5}$:
\begin{itemize}
\item The dual variables corresponding to $x_{3,4,5}$ are
$\bs y_{3,4,5,1}(1)$, $\bs y_{3,4,5,2}(1)$, $\bs y_{3,4,5,3}(1)$, 
$\bs y_{2,4,5,1}(2)$, $\bs y_{3,3,5,2}(2)$ and $\bs y_{3,4,4,3}(2).$

\item We compute 
$\prox_{2\sigma,\overline h^*_{3,4,5}}(\boldsymbol y_{3,4,5,:})$,
$\prox_{2\sigma,\overline h^*_{2,4,5}}(\boldsymbol y_{2,4,5,:})$,
$\prox_{2\sigma,\overline h^*_{3,3,5}}(\boldsymbol y_{3,3,5,:})$ and
$\prox_{2\sigma,\overline h^*_{3,4,4}}(\boldsymbol y_{3,4,4,:})$,
which amounts to 12 real numbers.

\item We update only the 6 useful dual values.

\end{itemize}
\end{example}
}

We are now in a position to simplify the iterations~(\ref{eq:algo-brute-1})--(\ref{eq:algo-brute-4}).
For every $k$, we define \revision{the vectors} $\overline y_{k+1} = \prox_{\sigma,h^\star}\big(S(\bs y_k+D(\tilde \sigma) Kx_k)\big)$ and
$\overline {\bs y}_{k+1} = ({\bs 1}_{m_1}\otimes\overline y_{k+1}^{(1)},\dots, {\bs 1}_{m_p}\otimes \overline y_{k+1}^{(p)})$.
Upon noting that $SD(\tilde \sigma) K=D(\sigma)D(m)SK=D(\sigma)M$, we obtain 
\begin{equation}
  \overline y_{k+1} = \prox_{\sigma,h^\star}(z_k+D(\sigma)Mx_k) \label{eq:simp-ybar} 
\end{equation}
where we defined $z_k = S(\bs y_k)$, otherwise stated, for each $j\in \{1,\dots,p\}$,
$$
z_k^{(j)} = \frac 1{m_j}\sum_{i\in I(j)}\bs y_k^{(j)}(i)\,.
$$
Note that $z_{k+1}$ differs from $z_k$ only along the components $j$ for which $\bs y_{k+1}^{(j)}(i)$ differs from $\bs y_{k}^{(j)}(i)$
for some~$i$. That is, $z_{k+1}^{(j)}=z_k^{(j)}$ for each $j$ such that $(i,j)\notin \mathcal J(i_{k+1})$ for all $I$ while for any $j$ such that there exists $i$ such that $(i,j)\in \mathcal J(i_{k+1})$,
\begin{equation}
z_{k+1}^{(j)} =z_{k}^{(j)}+\frac {1}{m_j}\sum_{i:(i,j) \in \mathcal J(i_{k+1})}(\bs y_{k+1}^{(j)}(i)-\bs y_k^{(j)}(i))\,.
\label{eq:simp-z}
\end{equation}
Now consider equation~(\ref{eq:algo-brute-2}). By Lemma~\ref{lem:adjointK}, $K^\star\overline {\bs y}_{k+1}=M^\star\overline y_{k+1}$. Thus,
setting $w_k = K^\star\bs y_k$, equation~(\ref{eq:algo-brute-2}) simplifies to:
\begin{equation}
\overline x_{k+1} = \prox_{\tau,g}\Big(x_k-D(\tau)\big(\nabla f(x_k)+ (2M^\star\overline {y}_{k+1}-w_k)\big)\Big)\,.\label{eq:simp-xbar}
\end{equation}
By Lemma~\ref{lem:adjointK} again, $w_k = (\sum_{j\in J(1)} M_{j1}^\star \bs y_k^{(j)}(1),\dots,\sum_{j\in J(n)} M_{jn}^\star \bs y_k^{(j)}(n))$.
Therefore, $w_{k+1}$ only differs from $w_k$ along the coordinates $i$ such that there exists $(i,j) \in \mathcal J(i_{k+1})$ and the update reads:
\begin{equation}
  \label{eq:simp-w}
  w_{k+1}^{(i)} = w_{k}^{(i)} + \sum_{(i,j) \in \mathcal J(i_{k+1})} M_{j,i}^\star (\bs y_{k+1}^{(j)}(i)-\bs y_k^{(j)}(i))\,.
\end{equation}
Putting all pieces together, the update equations~(\ref{eq:simp-ybar})--(\ref{eq:simp-w}) coincide with Algorithm~\ref{algo:main}.
We have thus proved that Algorithm~\ref{algo:main} is such that $(x_k,\bs y_k)$ converges to a primal-dual point of Problem~(\ref{eq:pb2})
provided that such a point exists.
To complete the proof, the final step is to relate the primal-dual solutions of Problem~(\ref{eq:pb2}) 
to the primal-dual solutions of the initial Problem~(\ref{eq:pb}).

Consider the mapping $G:\cX\times\cY\to \cX\times \bs\cY$ defined by
\[
G(x,y) := (x,({\bs 1}_{m_1}\otimes y^{(1)},\dots,{\bs 1}_{m_p}\otimes y^{(p)})).
\]
\begin{lemma}
  $\bs\cS = G(\cS)$.
\end{lemma}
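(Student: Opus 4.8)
The plan is to show the two inclusions $G(\cS)\subseteq \bs\cS$ and $\bs\cS\subseteq G(\cS)$ by directly relating the KKT systems defining $\cS$ and $\bs\cS$. The key algebraic facts to exploit are $M=D(m)SK$ (equation~(\ref{eq:M=mSK})), the adjoint identity $K^\star({\bs 1}_{m_1}\otimes y^{(1)},\dots,{\bs 1}_{m_p}\otimes y^{(p)})=M^\star y$ from Lemma~\ref{lem:adjointK}, and the characterization of $\partial\overline h^\star$ obtained inside the proof of Lemma~\ref{lem:hdiff}, namely that $\overline h^\star(\bs\varphi)=+\infty$ unless $\bs\varphi\in\bs\cC$ (the ``diagonal'' subspace of vectors of the form ${\bs 1}_{m_j}\otimes\varphi^{(j)}$), and that for $\bs\varphi=({\bs 1}_{m_j}\otimes\varphi^{(j)})_j\in\bs\cC$ one has $\overline h^\star(\bs\varphi)=h^\star(\varphi)$ with $\bs u\in\partial\overline h^\star(\bs\varphi)$ iff $D(m)S(\bs u)\in\partial h^\star(\varphi)$.

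First I would take $(x_*,y_*)\in\cS$, so that $0\in\nabla f(x_*)+\partial g(x_*)+M^\star y_*$ and $0\in -Mx_*+\partial h^\star(y_*)$, and set $\bs y_*={\bs 1}_{m_\bullet}\otimes y_*:=({\bs 1}_{m_1}\otimes y_*^{(1)},\dots,{\bs 1}_{m_p}\otimes y_*^{(p)})$, i.e.\ $(x_*,\bs y_*)=G(x_*,y_*)$. Using Lemma~\ref{lem:adjointK}, $K^\star\bs y_*=M^\star y_*$, so the first inclusion $0\in\nabla f(x_*)+\partial g(x_*)+K^\star\bs y_*$ is immediate. For the second, I use that $y_*\in\partial h^\star$-domain means (by the equivalence just recalled) that $\bs y_*\in\bs\cC$ and $\bs u\in\partial\overline h^\star(\bs y_*)$ iff $D(m)S(\bs u)\in\partial h^\star(y_*)$; it therefore suffices to find $\bs u$ with $D(m)S(\bs u)=Mx_*$ and $\bs u=Kx_*$. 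But $D(m)S(Kx_*)=Mx_*$ exactly by~(\ref{eq:M=mSK}), so $Kx_*\in\partial\overline h^\star(\bs y_*)$, i.e.\ $0\in -Kx_*+\partial\overline h^\star(\bs y_*)$. Hence $G(\cS)\subseteq\bs\cS$.

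Conversely, take $(x_*,\bs y_*)\in\bs\cS$. The inclusion $0\in -Kx_*+\partial\overline h^\star(\bs y_*)$ forces $\partial\overline h^\star(\bs y_*)\neq\emptyset$, hence $\bs y_*\in\bs\cC$, so $\bs y_*={\bs 1}_{m_\bullet}\otimes y_*$ for a unique $y_*\in\cY$ (recover it via $y_*=S(\bs y_*)$, which is the genuine inverse of ${\bs u}\mapsto{\bs 1}_{m_\bullet}\otimes{\bs u}$ on $\bs\cC$); thus $(x_*,\bs y_*)=G(x_*,y_*)$. Then $K^\star\bs y_*=M^\star y_*$ by Lemma~\ref{lem:adjointK}, so the first inclusion in the definition of $\bs\cS$ reads $0\in\nabla f(x_*)+\partial g(x_*)+M^\star y_*$. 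For the second, the equivalence recalled above applied with $\bs u=Kx_*\in\partial\overline h^\star(\bs y_*)$ gives $D(m)S(Kx_*)\in\partial h^\star(y_*)$, i.e.\ $Mx_*\in\partial h^\star(y_*)$ by~(\ref{eq:M=mSK}), which is exactly $0\in -Mx_*+\partial h^\star(y_*)$. Hence $(x_*,y_*)\in\cS$, so $\bs\cS\subseteq G(\cS)$, and combining the two inclusions yields $\bs\cS=G(\cS)$.

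The only genuinely delicate point is making sure the ``diagonality'' of $\bs y_*$ is \emph{forced} rather than assumed in the reverse direction: this is where one must invoke that the normal-cone / subdifferential of $\overline h^\star$ is empty off the subspace $\bs\cC$ (established in the proof of Lemma~\ref{lem:hdiff}), so that a dual solution of~(\ref{eq:pb2}) cannot have inconsistent copies across the block $j$. Everything else is a mechanical bookkeeping of the identities $M=D(m)SK$ and $K^\star\circ({\bs 1}_{m_\bullet}\otimes\,\cdot\,)=M^\star$, together with the observation that $S$ restricted to $\bs\cC$ inverts the embedding ${\bs u}\mapsto{\bs 1}_{m_\bullet}\otimes{\bs u}$.
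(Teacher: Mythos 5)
Your proof is correct and follows essentially the same route as the paper's: both directions rest on the identity $K^\star({\bs 1}_{m_1}\otimes y^{(1)},\dots,{\bs 1}_{m_p}\otimes y^{(p)})=M^\star y$, the equivalence $\bs u\in\partial\overline h^\star(\bs\varphi)\Leftrightarrow D(m)S(\bs u)\in\partial h^\star(\varphi)$ extracted from the proof of Lemma~\ref{lem:hdiff}, and the observation that $\partial\overline h^\star$ is empty off the diagonal subspace, which forces any dual solution of~(\ref{eq:pb2}) to lie in the range of $G$. The paper packages the two inclusions as a chain of equivalences plus that final diagonality remark, whereas you spell them out separately, but the content is identical.
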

\begin{proof}
Let $(x,y)\in \cX\times\cY$ and set $\bs y= ({\bs 1}_{m_1}\otimes y^{(1)},\dots,{\bs 1}_{m_p}\otimes y^{(p)})$.
Then  $M^\star y = K^\star \bs y$, therefore
$$
0\in \nabla f(x)+\partial g(x) + K^\star \bs y\ \Leftrightarrow\ 0\in \nabla f(x)+\partial g(x) + M^\star y\,.
$$
Moreover,
   \begin{eqnarray*}
    0 \in -Kx+\partial \overline h^\star(\bs y) &\Leftrightarrow&     Kx\in \partial \overline h^\star(\bs y) \\
&\Leftrightarrow&     D(m)S(Kx)\in \partial  h^\star(y) \\
&\Leftrightarrow&     Mx\in \partial  h^\star(y) 
  \end{eqnarray*}
where we used Lemma~\ref{lem:hdiff} along with the identities $D(m)SK=M$ and $S(\bs y) = y$. The proof is completed upon noting that if 
$(x,\bs y)\in \bs\cS$, then there exists $y\in \cY$ such that $\bs y$ has the form $\bs y= ({\bs 1}_{m_1}\otimes y^{(1)},\dots,{\bs 1}_{m_p}\otimes y^{(p)})$
.
\end{proof}
We have shown that, almost surely, $(x_k,\bs y_k)$ converges to some point in $G(\cS)$.
This completes the proof of Theorem~\ref{the:main}.

\revisiontrois{

\section{Convergence rate}
\label{sec:rate}

In this section, we are interested in the rate of convergence
of the method. 
We consider three cases:
\begin{itemize}
\item $h$ is Lipschitz continuous: we prove a $O(1/\sqrt k)$ decrease for the function value (Theorem~\ref{the:sublinear_rate}).
\item $h = I_{\{b\}}$, {\em i.e.} $h(y) = 0$ if $y = b$ and $h(y) = +\infty$ otherwise. This corresponds to an optimization problem under the affine constraints $Mx=b$.
We prove a $O(1/\sqrt k)$ decrease for the function value and the feasibility (Theorem~\ref{the:sublinear_rate}). 
\item $f+g$ is strongly convex and $\nabla h$ is Lipschitz continuous: we prove a $O(e^{-\mu k})$ rate for the distance to the optimum (Theorem~\ref{the:strconv}).
\end{itemize}
These convergence guarantees are of the same order as what can be
obtained by other primal-dual methods like the ADMM~\cite{davis2017faster}, {\em i.e.} $O(1/\sqrt{k})$ in general and
linear rate of convergence under strong convexity assumptions.

\begin{theorem}
\label{the:sublinear_rate}
Define for $\alpha \geq 1$,
\begin{align*}
& C_{1,\alpha} = \max_{1 \leq i \leq n} \frac{\tau_i^{-1} + \tau_i^{-1/2}\rho(\sum_{j \in J(i)}m_j \sigma_j M_{j,i}^\star M_{j,i})^{1/2}}{\tau_i^{-1} - \rho(\sum_{j \in J(i)}m_j \sigma_j M_{j,i}^\star M_{j,i})}(1+\frac{n}{\alpha}) \\
& C_{2, \alpha} = \Big(1 + \max_{1 \leq i \leq n}\frac{\alpha^{-1}(n(n-1)+1)+1}{\tau_i^{-1} - \beta_i - \rho(\sum_{j \in J(i)}(2-\bs \pi_j(i))m_j\sigma_j M_{j,i}^\star M_{j,i})}\beta_i\Big)\,.
\end{align*}
We have that $C_{1,\alpha}$ and $C_{2,\alpha}$ are nonincreasing with respect to $\alpha$, and thus bounded.

Define the number of iterations $\hat K \in \{1, \ldots, k\}$ as a random variable, independent of $\{i_1, \ldots, i_{k}\}$ and such that $\Pr(\hat K = l) = \frac{1}{k}$ for all $l \in \{1, \ldots, k\}$.

If $h$ is $L(h)$-Lipschitz in the norm $\norm{\cdot}_{D(m)\sigma}$, then for all $k \geq 0$, 
\begin{multline*}
\bE(f(\bar x_{\hat K})+g(\bar x_{\hat K}) + h(M\bar x_{\hat K})  - f(x_*) - g(x_*) - h(Mx_*) ) 
\leq \frac{C_{2,\sqrt k}+2 C_{1,k}}{\sqrt k}  n (S_{0,*}+V(z_{0}-z_*))
+ \frac{4}{\sqrt{k}}L(h)^2 
\end{multline*}
where $V$ is defined in \eqref{eq:def-V} and $S_{0, *}$ is defined in \eqref{eq:Sk}.

If $h = I_{\{b\}}$, then for all $k \geq 0$, 
\begin{align*}
\bE(f(\bar x_{\hat K}) + g(\bar x_{\hat K}) &- f(x_*) - g(x_*)) 
\leq \frac{C_{2,\sqrt k}+2 C_{1,k}}{\sqrt k}  n \big(S_{0,*}+V(z_{0}-z_*)\big) + \norm{y_*}\bE(\norm{ M \bar x_{\hat K} - b })\\
\bE(\norm{M \bar x_{\hat K} - b}_{D(m)\sigma}) &\leq \frac{2}{\sqrt{k}} \Big(\sqrt{C_{2, \sqrt k}+2C_{1,k}}+\sqrt{2C_{1,k}}\Big) \big(n (S_{0,*}+V(z_{0}-z_*))  \big)^{1/2}\big)
\end{align*}
\end{theorem}
\begin{proof}
We begin with the proof for Algorithm~\ref{algo:casSeparable}, that is the case $m_1 = \ldots = m_p = 1$.

We combine the following inequalities proved in the previous sections and that are valid for all $(x,y) \in \cX \times \cY$.
\begin{align*}
& g(\overline x_{k+1}) + \ps{\overline x_{k+1},\nabla f(x_k)+M^\star(2\overline y_{k+1}-y_k)}+\frac 12\|\overline x_{k+1}-x_k\|^2_{\tau^{-1}} \\
&\qquad  \overset{\eqref{eq:prox_g_ineq}}{\leq} g(x) + \ps{x,\nabla f(x_k)+M^\star(2\overline y_{k+1}-y_k)}+\frac 12\|x-x_k\|^2_{\tau^{-1}}-\frac 12\|\overline x_{k+1}-x\|^2_{\tau^{-1}} \\
& h^\star(\overline y_{k+1}) - \ps{\overline y_{k+1},Mx_k}+\frac 12\|\overline y_{k+1}-y_k\|^2_{\sigma^{-1}} 
\overset{\eqref{eq:prox_hstar_ineq}}{\leq} 
h^\star(y) - \ps{y,Mx_k}+\frac 12\|y-y_k\|^2_{\sigma^{-1}}-\frac 12\|\overline y_{k+1}-y\|^2_{\sigma^{-1}} \\
& \bE_k(f(x_{k+1})) \overset{\eqref{eq:taylor}+Lem.~\ref{lem:esperances}}{\leq} f(x_k)+ \frac 1n \ps{\nabla f(x_k),\bar x_{k+1}-x_k} +
\frac{1}{2n}\|\bar x_{k+1}-x_k\|^2_\beta \\
& f(x) \geq f(x_k) + \ps{\nabla f(x_k), x - x_k}
\end{align*}
We obtain that for all $z \in \cX \times \cY$ such that $z$ is measurable with respect to $\cF_k$, 
\begin{multline*}
g(\bar x_{k+1}) + n \bE_k(f(x_{k+1})) - (n-1) f(x_k) + \ps{M \bar x_{k+1}, y} - h^\star(y) 
+ h^\star(\bar y_{k+1}) \\ - \ps{M^\top \bar y_{k+1}, x} - g(x) - f(x)
\leq V(z_k - z) - V(\bar z_{k+1} - z) - V(\bar z_{k+1} - z_k) + \frac{1}{2}\|\bar x_{k+1}-x_k\|^2_\beta
\end{multline*}
As $\nabla f$ is $n$-Lipschitz in the norm $\norm{\cdot}_\beta$~\cite{RT:PCDM} and $n \bE(x_{k+1}) - (n-1) x_k - \bar x_{k+1} = 0$,
\begin{align*}
n \bE_k(f(x_{k+1})) &- (n-1) f(x_k) \geq n \bE_k\big(f(\bar x_{k+1}) + \ps{\nabla f(\bar x_{k+1}), x_{k+1} - \bar x_{k+1}} \big) \\
& \qquad - (n-1) \big(f(\bar x_{k+1})+ \ps{\nabla f(\bar x_{k+1}), x_{k} - \bar x_{k+1}} + \frac{n}{2} \norm{x_k - \bar x_{k+1}}^2_\beta\big) \\
& \geq f(\bar x_{k+1}) - \frac{n(n-1)}{2}\norm{x_k - \bar x_{k+1}}^2_\beta \;.
\end{align*}
We also have for all $\alpha > 0$, 
\begin{align*}
V(z_k - z) &- V(\bar z_{k+1} - z) - V(\bar z_{k+1} - z_k) = \ps{z_k - \bar z_{k+1}, \bar z_{k+1} - z}_V \\
&\leq 2 V(z_k - \bar z_{k+1})^{1/2} V(\bar z_{k+1} - z)^{1/2} \leq 
\alpha V(z_k - \bar z_{k+1}) + \frac{1}{\alpha} V(\bar z_{k+1} - z)
\end{align*}
Gathering everything, we get
\begin{multline*}
g(\bar x_{k+1}) + f(\bar x_{k+1}) + \ps{M \bar x_{k+1}, y} - h^\star(y) 
+ h^\star(\bar y_{k+1}) - \ps{M^\top \bar y_{k+1}, x}\\  - g(x) - f(x)
- \frac{1}{\alpha} V(\bar z_{k+1} - z)\leq \alpha V(z_k - \bar z_{k+1})  + \frac{n(n-1)+1}{2}\|\bar x_{k+1}-x_k\|^2_\beta
\end{multline*}
We can show by tedious but straightforward algebra that the norms $V^{1/2}$, $\tilde V^{1/2}$ and $(1/2(\norm{x}^2_{\tau^{-1}}+\norm{y}^2_{\sigma^{-1}}))^{1/2}$ are equivalent with constants given by
\begin{align*}
V(z) & \leq \Big(\max_{1 \leq i \leq n} 1 + \sqrt{\tau_i \rho(\sum_{j \in J(i)} \sigma_j M_{j,i}^\star M_{j,i})}\Big)\frac{1}{2} (\norm{x}^2_{\tau^{-1}} +\norm{y}^2_{\sigma^{-1}}) 
\leq 2 \times \frac{1}{2} (\norm{x}^2_{\tau^{-1}}+\norm{y}^2_{\sigma^{-1}}) \\
\frac{1}{2} (&\norm{x}^2_{\tau^{-1}}+\norm{y}^2_{\sigma^{-1}}) \leq 
\max_{1 \leq i \leq n} \frac{\tau_i^{-1} + \tau_i^{-1/2}\rho(\sum_{j \in J(i)}\sigma_j^{1/2} M_{j,i}^\star M_{j,i})^{1/2}}{\tau_i^{-1} - \rho(\sum_{j \in J(i)}\sigma_j M_{j,i}^\star M_{j,i})} 
V(z) 
 = C_{1, \infty} V(z) \\
\alpha V&(z) + \frac{n(n-1)+1}{2}\|\bar x_{k+1}-x_k\|^2_\beta \\
&\leq \Big(\alpha + \max_{1 \leq i \leq n}\frac{n(n-1)+1+\alpha}{\tau_i^{-1} - \beta_i -  \rho(\sum_{j \in J(i)}(2-\pi_j)\sigma_jM_{j,i}^\star M_{j,i})}\beta_i\Big) \tilde V(z) =  \alpha C_{2,\alpha} \tilde V(z) 
\end{align*}
where $C_{2,\alpha} \in O(1)$ for $\alpha \to \infty$.
Denoting the smoothed gap~\cite{tran2015smooth} as
\begin{multline*}
\mathcal G_{\frac{2}{\alpha}}(\bar z_{k}, \bar z_{k}) = \sup_{z} g(\bar x_{k}) + f(\bar x_{k}) + \ps{M \bar x_{k}, y} - h^\star(y) 
+ h^\star(\bar y_{k}) \\ - \ps{M^\top \bar y_{k}, x} - g(x) - f(x)
- \frac{2}{2\alpha} \norm{\bar x_{k} - x}^2_{\tau^{-1}} - \frac{2}{2\alpha} \norm{\bar y_{k} - y}^2_{\sigma^{-1}},
\end{multline*}
we have
\begin{align*}
\mathcal G_{\frac{2}{\alpha}}(\bar z_{k}, \bar z_{k})  \leq \alpha C_{2,\alpha} \tilde V(\bar z_{k}-z_{k-1})
\end{align*}
Now, by \eqref{eq:sum_Vtilde_bounded} and the fact that $\hat K$ is independent of the coordinate selection process,
\[
\bE(\tilde V(\overline
z_{\hat K}-z_{\hat K-1})) \leq \sum_{i=1}^{k} \frac {1}{k} \bE(\tilde V(\overline
z_{i}-z_{i-1}))\overset{\eqref{eq:sum_Vtilde_bounded}}{\leq} \frac{n}{k} (S_{0,*}+V(z_{0}-z_*))
\]
so 
\begin{align*}
\bE(\mathcal G_{\frac{2}{\alpha}}(\bar z_{\hat K}, \bar z_{\hat K}))  \leq \frac{\alpha C_{2,\alpha}}{k}  n (S_{0,*}+V(z_{0}-z_*))
\end{align*}
Taking $\alpha = \sqrt k$ as in~\cite{davis2017faster}, we get
\begin{align*}
\bE(\mathcal G_{\frac{2}{\sqrt k}}(\bar z_{\hat K}, \bar z_{\hat K}))  \leq  \frac{C_{2,\sqrt k}}{\sqrt k}  n (S_{0,*}+V(z_{0}-z_*))
\end{align*}

We can also bound
\begin{align*}
\frac 12 \bE(&\norm{\bar x_{\hat K} - x_*}^2_{\tau^{-1}}) \leq C_{1, \infty} \bE(V(\bar z_{\hat K} - z_*)) \\ &\hspace{-1.5em}\overset{\eqref{eq:EV}+\eqref{eq:def-Rpi}}{=}C_{1, \infty} \bE(n V(z_{\hat K}-z_*)-n V(z_{\hat K-1}-z_*) + V(z_{\hat K-1}-z_*) + R_\pi^{(\hat K)})\\
& = \frac{C_{1, \infty}}{k}\! \sum_{i=1}^k \bE(n V(z_{i}-z_*)-n V(z_{i-1}-z_*) + V(z_{i-1}-z_*) + R_\pi^{(i)})\\
& = \frac{C_{1, \infty}}{k} \bE(n V(z_k - z_*) - n V(z_0 - z_*) + \sum_{i=1}^k V(z_{i-1}-z_*) + R_\pi^{(i)}) \\
& \overset{\eqref{eq:contraction}}{\leq}C_{1, \infty}\!\frac{n + k}{k} (S_{0,*} + V(z_0 - z_*)) + 
\frac{C_{1, \infty}}{k}\sum_{i=1}^k  R_\pi^{(i)} - \tilde V(\bar z_i - z_{i-1}) \\
& \leq C_{1,k} (S_{0,*} + V(z_0 - z_*))
\end{align*}
where the last inequality follows from 
$R_\pi^{(i)} - \tilde V(\bar z_i - z_{i-1}) = \frac{1}{2} \norm{\bar x_i - x_{i-1}}^2_{\beta} - V(\bar z_i - z_{i-1}) \leq 0$.

If $h$ is $L(h)$-Lipschitz in the norm $\norm{\cdot}_{\sigma}$, we can choose $y \in \partial h(M\bar x_k) \neq \emptyset$ so that $\langle M \bar x_k, y \rangle - h^*(y) = h(M \bar x_k)$, and 
$x = x^\star$ so that $h^*(\bar y_k) - \langle M^\top \bar y_k, x^\star\rangle \geq -h(M x^\star)$

We then use the inequality
\begin{multline*}
\mathcal G_{\frac{2}{\sqrt{k}}}(\bar z_{\hat K}, \bar z_{\hat K}) \geq f(\bar x_{\hat K})+g(\bar x_{\hat K}) + h(M\bar x_{\hat K}) 
- \frac{4}{\sqrt{k}}L(h)^2 - f(x_*) - g(x_*) - h(Mx_*) - \frac{1}{ \sqrt{k}}\norm{\bar x_{\hat K} - x_*}^2_{\tau^{-1}}
\end{multline*}
to conclude.

If $h = I_{\{b\}}$, then using Lemma 1 in \cite{tran2015smooth}, we get that
\begin{align*}
\bE(f(\bar x_{\hat K}) + g(\bar x_{\hat K}) & - f(x_*) - g(x_*)) \leq \frac{C_{2,\sqrt k}}{\sqrt k}  n (S_{0,*}+V(z_{0}-z_*)) + \\
&\bE(\frac{1}{\sqrt k} \norm{\bar x_{\hat K} - x_*}^2_{\tau^{-1}} + \frac{1}{\sqrt k} \norm{\bar y_{\hat K} - y_*}^2_{\sigma^{-1}} - \ps{y_*, M \bar x_{\hat K} - b })  \\ 
\bE(\norm{M \bar x_{\hat K} - b}_\sigma) &\leq \frac{2}{\sqrt k}\Big(\bE(\norm{\bar y_{\hat K} - y_\star}_{\sigma^{-1}}) + \big[\bE(\norm{\bar y_{\hat K} - y_\star}_{\sigma^{-1}}^2)\\
&+ \frac{2}{2/\sqrt{k}} \frac{C_{2, \sqrt k}}{\sqrt{k}} n (S_{0,*}+V(z_{0}-z_*)) + \bE(\norm{\bar x_{\hat K} - x_\star}^2_{\tau^{-1}}) \big]^{1/2}\Big)
\end{align*}

To obtain the result for Algorithm~\ref{algo:main} we only
need to remark that when we need to duplicate dual variables we have 
$h^\star(\bar y_k) = \overline h^\star(\bar {\bs y}_k)$.
One then just needs to replace $\sigma_j$ by $m_j\sigma_j$ in the
conditions.
\end{proof}
}

\revisiontrois{
\begin{remark}
To prove the result of Theorem~\ref{the:sublinear_rate}, we use a random number of iterations. This has also been proposed for instance in~\cite{Stoch-dual-Coord-Ascent} for the stochastic
dual coordinate ascent algorithm. Note that the number of iterations
can be sampled beforehand, which means that the procedure comes with
no computational cost. When $\hat K$ iterations have taken place,
one just needs to compute $\bar x_{\hat K+1}$ once in order to obtain the guarantee.
\end{remark}	
}

\revisiontrois{

We also have a fast rate if the problem has particular properties.
We prove that if the Lagrangian function satisfies a strong convexity and strong concavity assumption, then Algorithm~\ref{algo:main} converges 
exponentially fast with a rate that depends on the step size.
\begin{assum}
\label{hyp:strconv}
There exists non-negative constants $\mu_g$ and $\mu_f$ such that $\mu_f + \mu_g>0$ and a constant $\mu_{h^\star}>0$ such that  $g$ is $\mu_g$-strongly convex in the norm $\|\cdot \|_{\tau^{-1}}$, $f$ is $\mu_f$-strongly convex in the norm $\|\cdot \|_{\tau^{-1}}$ and $h^\star$ is $\mu_{h^\star}$-strongly convex in the norm $\|\cdot \|_{\sigma^{-1}}$.
\end{assum}

\revisiontrois{
\begin{theorem}
\label{the:strconv}
For $z = (x,\bs y)$, denote 
$V^\mu(z) = V(z) + \mu_g \|x\|_{\tau^{-1}}^2 \!+  \mu_{h^\star}'\|\bs y\|_{(D(m)\sigma)^{-1}}^2$
where 
$\mu_{h^\star}' = \min(\mu_{h^\star}, \sup \{\mu \! > \! 0 : \forall i, \tau_i^{-1} \!>\! \beta_i + \rho(\sum_{j \in J(i)}\!\! \frac{(2-\bs \pi_j(i))^2 \sigma_j m_j}{2-\bs \pi_j(i) - \mu (1-\bs \pi_j(i))} M_{j,i}^\star M_{j,i})\}$
(note that if $\bs \pi_j(i) = 1$ for all $i$ and $j$, then $\mu_{h^\star}' = \mu_{h^\star}$).
If Assumption~\ref{hyp:strconv} holds then the iterates of Algorithm~\ref{algo:main} satisfy
\begin{align*}
\bE\left[ S_{k,*}+V^\mu(z_{k}-z_*) \right] \leq \Big(1 - \frac 1n \frac{(\mu_f + 2 \mu_g)\mu_{h^\star}'}{\mu_f + 2 \mu_g + \mu_{h^\star}'}\Big)^k \left[ S_{0,*}+V^\mu(z_{0}-z_*) \right] \,.
\end{align*}
\end{theorem}
}

In order to prove this theorem, we begin with a lemma that generalizes Lemma~\ref{lem:lyap}.
\begin{lemma}
If Assumption~\ref{hyp:strconv} holds, then
\begin{multline*}
\ps{\nabla f(x_*) -\nabla f(x),x_*-\overline x}+V(\overline z - z)
  \\
  \leq V(z - z_*)  -V(\overline z - z_*) - \mu_g \|\overline
    x-x\|^2_{\tau^{-1}} - \mu_{h^\star}\|\overline y-y\|^2_{\sigma^{-1}}\,.
\end{multline*}
\end{lemma}

\begin{proof}
Assumption~\ref{hyp:strconv} gives us: for $(x_*, y_*) \in \cS$,
\begin{align}
&  g(\overline x)\geq f(x_*)+g(x_*)+\ps{\nabla f(x_*) + M^\star y_*,x_*-\overline x} + \frac{\mu_g}{2}\| x - x_*\|^2_{\tau^{-1}} \, , \label{eq:mux}\\
& h^\star(\overline y)\geq h^\star(y_*)+\ps{Mx_*,
  \overline y-y_*}+ \frac{\mu_{h^\star}}{2}\|\overline y - y_*\|^2_{\sigma^{-1}} \,.
  \label{eq:muy}
\end{align}
With the same argument as in \eqref{eq:ybar_leq_ystar}, we have
$$
h^\star(\overline y) \leq h^\star(y_*) + \ps{\overline y-y_*,Mx}+\frac
12\|y_*-y\|^2_{\sigma^{-1}}-\frac {1+\mu_{h^\star}}{2}\|\overline
y-y_*\|^2_{\sigma^{-1}} - \frac 12\|\overline y-y\|^2_{\sigma^{-1}}
$$
and so using \eqref{eq:muy}
\begin{equation}
  \ps{M(x_*-x),\overline y-y_*}  \leq 
  \frac 12 \|y - y_*\|^2_{\sigma^{-1}} - \frac {1+2\mu_{h^\star}}2 \|\overline y - y_*\|_{\sigma^{-1}}^2 -\frac 12\|\overline y-y\|^2_{\sigma^{-1}} \label{eq:tmp-y-muy}
\end{equation}
Similarly, we have 
\begin{multline*}
\ps{\nabla f(x_*) - \nabla f(x),x_*-\overline x}-\frac{1+2\mu_g}{2} \|x - x_*\|^2_{\tau^{-1}}+\frac 12 \|\overline x - x_*\|^2_{\tau^{-1}} 
+\frac 12\|\overline x-x\|^2_{\tau^{-1}} 
  \leq \ps{2\overline y-y-y_*,M(x_*-\overline x)}\,.
\end{multline*}
Summing the above inequality with~(\ref{eq:tmp-y-muy}),
and recalling the definition of $V(z) = V(x, y) = \frac 12\|x\|^2_{\tau^{-1}}+\ps{y,Mx} +\frac 12\|y\|^2_{\sigma^{-1}}$, we get
\begin{multline*}
\ps{\nabla f(x_*) -\nabla f(x),x_*-\overline x}+V(\overline z - z)
  \leq V(z - z_*)  -V(\overline z - z_*) - \mu_g \|\overline
    x-x\|^2_{\tau^{-1}} - \mu_{h^\star}\|\overline y-y\|^2_{\sigma^{-1}}
\end{multline*}

\end{proof}
}

\revisiontrois{

\begin{proof}[Proof of Theorem \ref{the:strconv}]
We begin with the case $m_1 = \ldots = m_p$.

By Assumption~\ref{hyp:main}\eqref{hyp:main-norm}, if $\mu_{h^\star}>0$, then $\mu_{h^\star}'>0$ and
if $h^\star$ is $\mu_{h^\star}$-strongly convex, it is also $\mu_{h^\star}'$-strongly convex.
Then, by a straightforward adaptation of the proof of Lemma~\ref{lem:inegSk} 
to the strongly convex case,
we have 
\begin{align*}
\bE_k&\left[ S_{k+1,*}+V(z_{k+1}-z_*) + \frac{2\mu_g}{2}\|x_{k+1} - x_*\|_{\tau^{-1}}^2 + \frac{2 \mu_{h^\star}'}{2}\|y_{k+1} - y_*\|_{\sigma^{-1}}^2 \right] \\
&\leq (1-\frac 1n) S_{k,*}+V(z_{k}-z_*)
+ \frac{2(n-1) \mu_g - \mu_f}{2n}\|x_k - x_*\|_{\tau^{-1}}^2 \\
& \qquad + \frac{2 (n-1) \mu_{h^\star}'}{2n}\|y_k - y_*\|_{\sigma^{-1}}^2
-\frac 1n \tilde V(\overline z_{k+1}-z_k) + \frac{\mu_{h^\star}'}{n}\|\overline y_{k+1} - y_k\|^2_{\sigma^{-1}(1-\pi)}
\end{align*}
As soon as  $\tau_i^{-1} > \beta_i+\rho\left(\sum_{j\in J(i)}\frac{(2-\pi_j)^2}{2-\pi_j - \mu_{h^\star}'(1-\pi_j)}\sigma_j M_{j,i}^\star M_{j,i}\right)$, we can remove the term $-\frac 1n \tilde V(\overline z_{k+1}-z_k) + \frac{\mu_{h^\star}'}{n}\|\overline y_{k+1} - y_k\|^2_{\sigma^{-1}(1-\pi)} \leq 0$. This is indeed guaranteed by the definition of $\mu_{h^\star}'$.

In order to prove a linear convergence rate $(1-\eta)$, it suffices to prove
that $(1-\frac 1n) \leq (1-\eta)$ and that with respect to the order of semi-definite matrices,
\[
\begin{bmatrix}
\tau^{-1}(1 + \frac{2(n-1) \mu_g - \mu_f}{n}) \!\!\! &  M^\star \\
M   &  \!\!\! \sigma^{-1} (1 + \frac{2 (n-1) \mu_{h^\star}'}{n})
\end{bmatrix}
\preceq
(1-\eta)
\begin{bmatrix}
\tau^{-1}(1 + 2\mu_g) \!\!\! &  M^\star \\
M   & \!\!\!  \sigma^{-1} (1 + 2 \mu_{h^\star}')
\end{bmatrix}
\]
Using the fact that $M$ is block-diagonal, this gives for all $i$ the conditions
\begin{align*}
& 1 + \frac{2(n-1) \mu_g - \mu_f}{n} \leq (1-\eta) (1 + 2\mu_g) \\
& 1 + \frac{2 (n-1) \mu_{h^\star}'}{n} \leq (1-\eta)(1 + 2 \mu_{h^\star}') \\
& \tau_i^{-1} (-\eta(1 + 2 \mu_g) + \frac{\mu_f+2\mu_g}{n}) \geq \sum_{j \in J(i)} \frac{\sigma_j}{-\eta (1+ 2 \mu_{h^\star}') + \frac{2 \mu_{h^\star}'}{n}} \eta^2 M_{j,i}^\star M_{j,i} \,.
\end{align*}
Using the second condition we can multiply the third one by $-\eta (1+ 2 \mu_{h^\star}') + \frac{2 \mu_{h^\star}'}{n} \geq 0$ and we obtain the condition
\begin{align*}
\eta^2 \Big( &\tau_i^{-1} - \sum_{j \in J(i)} \sigma_j M_{j,i}^\star M_{j,i} \Big) + \tau_i^{-1} \Big(\eta^2 (2\mu_g+2\mu_{h^\star}') \\ 
&- \eta\big(\frac{\mu_f + 2 \mu_g + 2\mu_{h^\star}'}{n} - \frac{4\mu_g \mu_{h^\star}'}{n}-\frac{2\mu_f \mu_{h^\star}' + 4 \mu_g \mu_{h^\star}'}{n}\big) +\frac{(\mu_f+ 2 \mu_g)\mu_{h^\star}'}{n^2} \Big)
 \geq 0  \,.
\end{align*}
The first term is nonnegative thanks to Assumption~\ref{hyp:main}\eqref{hyp:main-norm}. The second term is nonnegative
as soon as 
\[
\eta \leq \frac 1n \frac{(\mu_f + 2 \mu_g)\mu_{h^\star}'}{\mu_f + 2 \mu_g + \mu_{h^\star}'} \,.
\]

To conclude, we remark that
\begin{equation*}
\frac 1n \frac{(\mu_f + 2 \mu_g)\mu_{h^\star}'}{\mu_f + 2 \mu_g + \mu_{h^\star}'}
\leq \min(\frac 1n \frac{\mu_f + 2 \mu_g}{1 + 2 \mu_g}, \frac 1n \frac{2 \mu_{h^\star}'}{1 + 2 \mu_{h^\star}'} ) \leq \frac 1n \,.
\end{equation*}

This result also implies the same rate for the iterates of Algorithm~\ref{algo:main} because $h^*$ is $\mu_{h^\star}$-strongly convex in the norm $\norm{\cdot}_{\sigma^{-1}}$ if and only if $\overline h^\star$ is $\mu_{h^\star}$-strongly convex in the norm $\norm{\cdot}_{\tilde \sigma^{-1}}$.
\end{proof}

\begin{remark}
It is worth noting that the algorithm does not depend on the strong convexity constants, 
which means that it automatically adapts to local strong convex-concave parameters of the Lagrangian. Moreover as can be seen on Figure~\ref{fig:svm_rcv1} we do observe linear convergence in some cases,
even when Assumption~\ref{hyp:strconv} is not satisfied. Thus we think that
Theorem~\ref{the:strconv} can give an indication of how the algorithm
behaves in favorable cases.
\end{remark}

\begin{remark}
Of particular interest is the relation between the rate proved 
in Theorem~\ref{the:strconv} and the size of the steps. 
Having longer step sizes improves the rate greatly since $\mu_f$, $\mu_g$
and $\mu_{h^\star}$, measured in the weighted norm, are ``proportional'' to the 
step-sizes: as $\mu_g \norm{x}_{\tau^{-1}}^2 = (\alpha \mu_g) \norm{x}_{(\alpha \tau)^{-1}}^2$ for all $\alpha>0$, multiplying the step-sizes by $\alpha > 1$ also multiplies  $\mu_f$, $\mu_g$
and $\mu_{h^\star}$ by $\alpha$, which leads to an improved rate
$1- \frac 1n \frac{(\alpha \mu_f + 2 \alpha \mu_g) \alpha\mu_{h^\star}}{\alpha \mu_f + 2 \alpha \mu_g + \alpha \mu_{h^\star}} = 1 - \alpha  \frac 1n \frac{(\mu_f + 2 \mu_g)\mu_{h^\star}}{\mu_f + 2 \mu_g + \mu_{h^\star}} < 1 -  \frac 1n \frac{(\mu_f + 2 \mu_g)\mu_{h^\star}}{\mu_f + 2 \mu_g + \mu_{h^\star}}$.

 As shown in Section \ref{sec:svm}, in large scale applications
one can expect much more than twice larger steps and so we can expect
a much faster algorithm by using large steps than by using the steps proposed
in \cite{iutzeler2013asynchronous}.
\end{remark}

}

\newpage 

\section{Numerical Experiments}
\label{sec:simu}

For all the experiments, we used one processor of a computer with Intel Xeon CPUs at 2.80GHz.

\subsection{Total Variation + $\ell_1$ Regularized Least Squares Regression}
\label{sec:exp_tv}

For given regularization parameters $\alpha>0$ and $r \in [0,1]$, we would like to solve the following  regression problem with regularization given by the sum of Total Variation (TV) and the $\ell_1$ norm:
\[
\min_{x \in \mathbb{R}^n} \frac{1}{2}\norm{Ax - b}_2^2 + \alpha\big( r \norm{x}_1 + (1-r) \norm{M x}_{2,1}\big).
\]
The problem takes place on a 3D image of the brains of size $40 \times 48 \times 34$. 
The optimization variable $x$ is a real vector with one entry in each voxel, that is $n$ = 65,280. 
Matrix $M$ is the discretized 3D gradient. This is a sparse matrix of size 195,840 $\times$ 65,280
with 2 nonzero elements in each row.
The matrix $A \in \mathbb{R}^{768 \times 65,280}$ and the vector $b \in \mathbb{R}^{768}$ correspond to 768 labeled experiments where each line of $A$ 
gathers brains activity for the corresponding experiment. Parameter $r$ tunes the tradeoff between the two regularization terms.
If $r$ = 1, one gets a Lasso problem for which coordinate descent has been reported to be very efficient~\cite{Friedman_Hastie_Hofling_Tibshirani07}.
For $r<1$, classical (primal) coordinate descent cannot be applied but primal-dual coordinate descent can. 

In this scenario, we set the objective as $f(x) = \frac{1}{2}\norm{Ax - b}_2^2 $, $g(x) = \alpha  r \norm{x}_1 $
and $h(y) = \alpha (1-r) \norm{y}_{2,1}$.
We coded Algorithm~\ref{algo:main} 
in Cython\footnote{The code is available on \url{http://perso.telecom-paristech.fr/~ofercoq/Software.html}}
and duplicated each dual variable two times.
Note that as $h=\alpha (1-r) \norm{\cdot}_{2,1}$ is not separable, we need to compute 12 dual components of $\bar{\bs y}_{k+1}$ for each
primal variable $x^{(i)}_{k+1}$ updated and then use only 6 of them to update $z^{(j)}_{k+1}$ for $j \in J(i_{k+1})$.
\revisiontrois{This procedure is explained in detail in Section~\ref{sec:general_case}.}
We chose $\sigma_j$ such that $\rho(\sum_{j \in J(i)} \sigma_j M_{j,i}^\star M_{j,i})$ is of the same order of magnitude as  
$\beta_i$  and $\tau_i$ equal to 0.95 times its upper bound in Assumption~\ref{hyp:main}. 
We compared Algorithm~\ref{algo:main} against:
\begin{itemize}
\item V\~{u}-Condat's algorithm~\cite{vu2013splitting,condat2013primaldual},
\item Chambolle-Pock's algorithm~\cite{chambolle2011first}, 
\item FISTA~\cite{beck2009fista} with an inexact resolution of the proximal operator of TV 
\revision{and a momentum factor ensuring convergence~\cite{chambolle2014convergence}},
\item L-BFGS~\cite{zhu1997lbfgs} with a smoothing of the nonsmooth functions and continuation.
\end{itemize}
Figure~\ref{fig:tvl1} indicates that our primal coordinate descent is a competitive algorithm
for a wide range of regularization parameters. 

Note that Chambolle-Pock needs to
compute the singular values decomposition of $A$ (which explains the flat shape of the 
performance curve when the algorithm starts). FISTA and V\~{u}-Condat need to 
estimate its largest singular value. If only a low accuracy is required, Algorithm~\ref{algo:main}
may have reached this low accuracy even before these preprocessing steps are completed.

L-BFGS has similar behaviour as Algorithm~\ref{algo:main} except for $\alpha=0.1$, $r=0.9$
where it suffers from the non-smoothness of the objective while  Algorithm~\ref{algo:main}
deals with it directly by the proximal operators.
FISTA is the fastest algorithm for \revision{problems with a
heavy TV regularization}.

\begin{figure}
\includegraphics[width=\linewidth, trim = 0 20 0 10, clip]{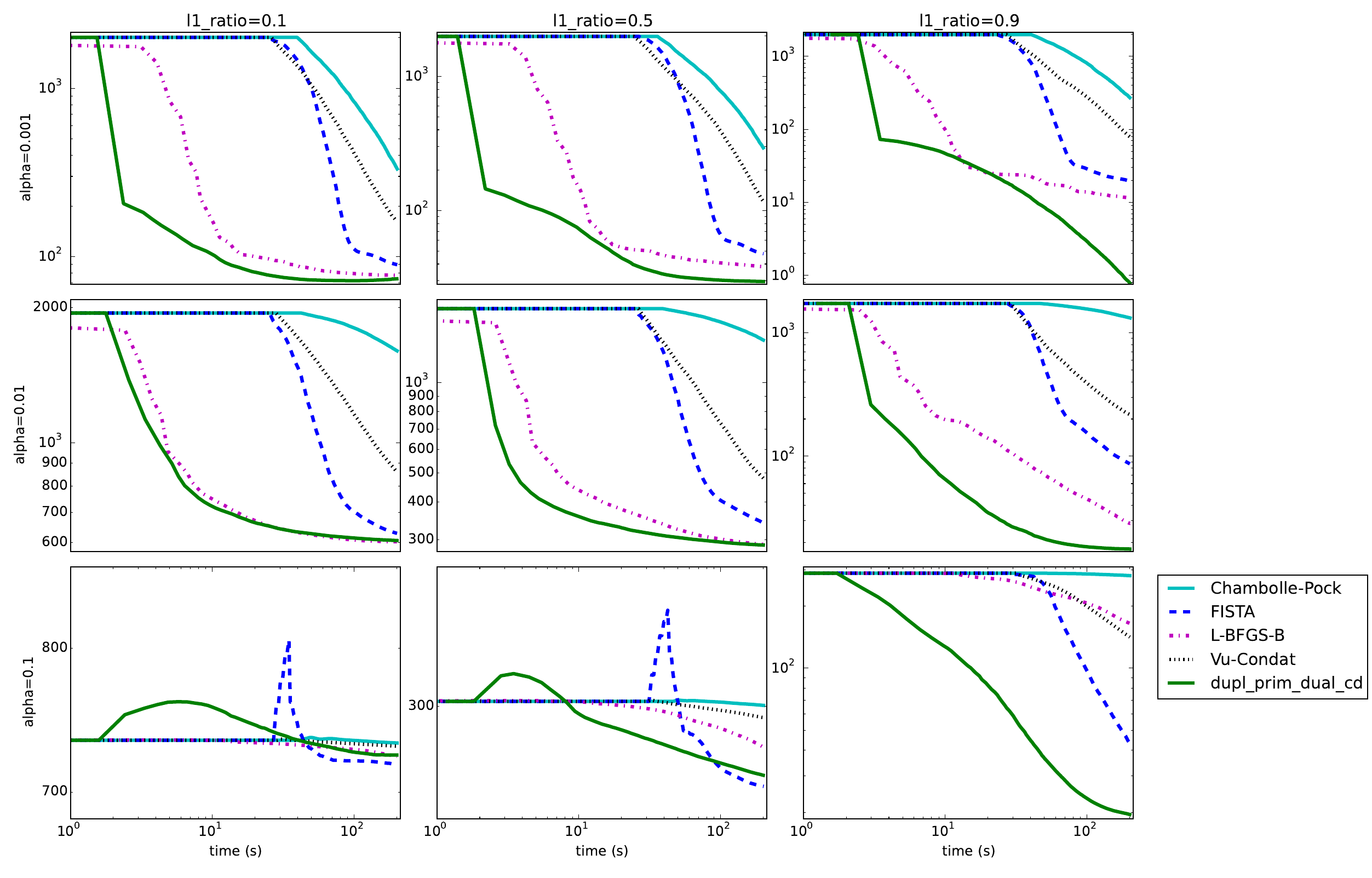}
\caption{Comparison of algorithms for TV+$L_1$-regularized regression at various
regularization parameters. \revision{For each problem, we compute the dual function
at the last iterate (this amounts to solving a Lasso problem). Then we compare
the primal objective curves to this reference value and we plot them in
logarithmic scale. Note for the choices of regularization parameters such that $\alpha (1-r)$ is larger, the problem is more difficult to solve because the total variation regularizer is dominant. This is in fact the most challenging part of the objective because it is non-differentiable and non-separable.}
}
\label{fig:tvl1}
\end{figure}

\subsection{Linear Support Vector Machines}
\label{sec:svm}

We now present a second application for our algorithm. We consider a set of $n$ observations gathered
 into a data matrix $A \in \mathbb{R}^{m \times n}$ and labels $b \in \mathbb{R}^n$ and we intend to 
solve the following Support Vector Machine (SVM) problem:
\[
\min_{w \in \mathbb{R}^m, w_0 \in \mathbb{R} }\sum_{i = 1}^n C_i \max\big(0, 1 - b_i ((A^\top w)_i+w_0) \big) + \frac{\lambda}{2}\norm{w}_2^2.
\]

As is common practice for this problem, we solve instead the Dual Support Vector Machine problem:
\[
\max_{x \in \mathbb{R}^n} -\frac{1}{2 \lambda} \norm{ A D(b)x}^2_2 + e^T x - \sum_{i = 1}^n I_{[0, C_i]}(x_i) - I_{\{0\}}(\langle b, x \rangle)
\]
\revision{Here, we are considering a nonzero bias. Therefore the primal SVM problem is
not strongly convex and the Dual SVM problem has a coupling constraint. Some authors
proposed to fix the bias to 0 in order to make the problem easier to solve but
we show that our method can solve the original SVM problem nearly as fast.}

In the experiments\footnote{Code available on \url{https://github.com/ofercoq/lightning}}, we consider: 
\begin{itemize}
\item the RCV1 dataset~\cite{lewis2004rcv1} where $A$ is a sparse $m \times n$ matrix with $m$ = 20,242, $n$ = 47,236 and 0.157 \%
of nonzero entries and we take $C_i = \frac{1}{n}$ for all $i$ and $\lambda = \frac{1}{4n}$. \revision{For this dataset, $\norm{A}^2 \approx 450 \max_i \norm{Ae_i}^2$, which means that using small step sizes leads to a roughly 
450 times slower algorithm. This situation is not uncommon and is one of the reasons why coordinate descent methods are attractive.}
\item the KDD cup 2009 dataset~\cite{guyon2009analysis}: the data is a mix of  14740 numerical values
and 260 categorical values from Orange Labs. We preprocessed the data by adding a feature for each column containing missing values
and binarizing the categorical values. We obtained a sparse matrix with $m$ = 86,825, $n$~=~50,000 and 1.79 \% of nonzero entries.
We divided the columns by their standard deviation and removed columns with a too small standard deviation.
There are three tasks with this dataset: estimate the appetency, churn and up-selling probability of customers.
As the classes are unbalanced, we compensate this with values of $C_i$ proportional to the class weight
and we chose $\max_i C_i = \lambda = \frac{1}{n}$. We also chose a value of $\sigma_i$ depending on the class.
\end{itemize}
Here $f(x) = \frac{1}{2} \norm{ A D(b)x}^2_2 - e^T x$,
$g(x) = \sum_{i = 1}^n I_{[0, C_i]}(x_i)$, $h(y) = I_{\{0\}}(y)$ ($h$ is the indicator for $\{0\} \subset \mathbb R$, {\em i.e.} $h(y) = 0$ if $y=0$, $h(y) = +\infty$ otherwise) and $M= b^\top$.
We compare the following methods:
\begin{itemize}
\item SCDA~\cite{Stoch-dual-Coord-Ascent}: note that SDCA simply forgets $I_{\{0\}}(y)$
in order to be able to apply the classical coordinate descent method a thus will
not converge to an optimal solution.
\item RCD~\cite{necoara2014random}: at each iteration, the algorithm selects two coordinates
randomly and performs a coordinate descent step according to these two variables.
Updating two variables at a times allows us to satisfy the linear constraint at each iteration.
\revisiontrois{
\item Primal-dual coordinate descent (PD-CD) with small steps using the step size $\tau < \frac 1{\beta(f) / 2 + \sigma\rho\left( K^\star K\right)}$ as in~\cite{iutzeler2013asynchronous}.
\item Algorithm~\ref{algo:main} with $\mathcal J(i) = \{i\} \times J(i)$ for all $i$ (PC-CD).
\item Algorithm~\ref{algo:notduplicated}, i.e. Algorithm~\ref{algo:main} with $\mathcal J(i) = \cup_{1 \leq j \leq n} \{j\} \times J(j)$ for all $i$ in order to maintain a single
Lagrange multiplier (PD-CD without duplication).
}
\end{itemize}
We can see on Figures~\ref{fig:svm_rcv1} and~\ref{fig:svm_orange} the decrease of the SVM duality gap for each algorithm.
SDCA is very efficient in the beginning and converges quickly. However, 
as the method does not take into account the intercept, it does not converge to 
the optimal solution and stagnates after a 
few passes on the data. 
 Algorithm~\ref{algo:main} allows step sizes nearly as long as SDCA's and 
taking into account the coupling constraint represents only marginal additional work.
Hence, the objective value decreases nearly as fast for SDCA in the beginning
without sacrificing the intercept, leading to a smaller objective value in the end.
The RCD method of~\cite{necoara2014random} does work but is not competitive
in terms of rate of convergence. \revisiontrois{Also, as expected, using small steps~\cite{iutzeler2013asynchronous} 
leads to a very slow algorithm in this context. Finally, for this problem,
the additional memory requirement induced by duplication is negligible compared to the size of the problem data, but the
slightly stricter step size condition may explain why PD-CD without duplication is slower.}
We also tried the C implementation of LIBSVM~\cite{chang2011libsvm}
but it needed 175s to solve the (medium-size) RCV1 problem.

\begin{figure}
\centering
\includegraphics[width=0.9\linewidth, trim = 0 15 0 15, clip]{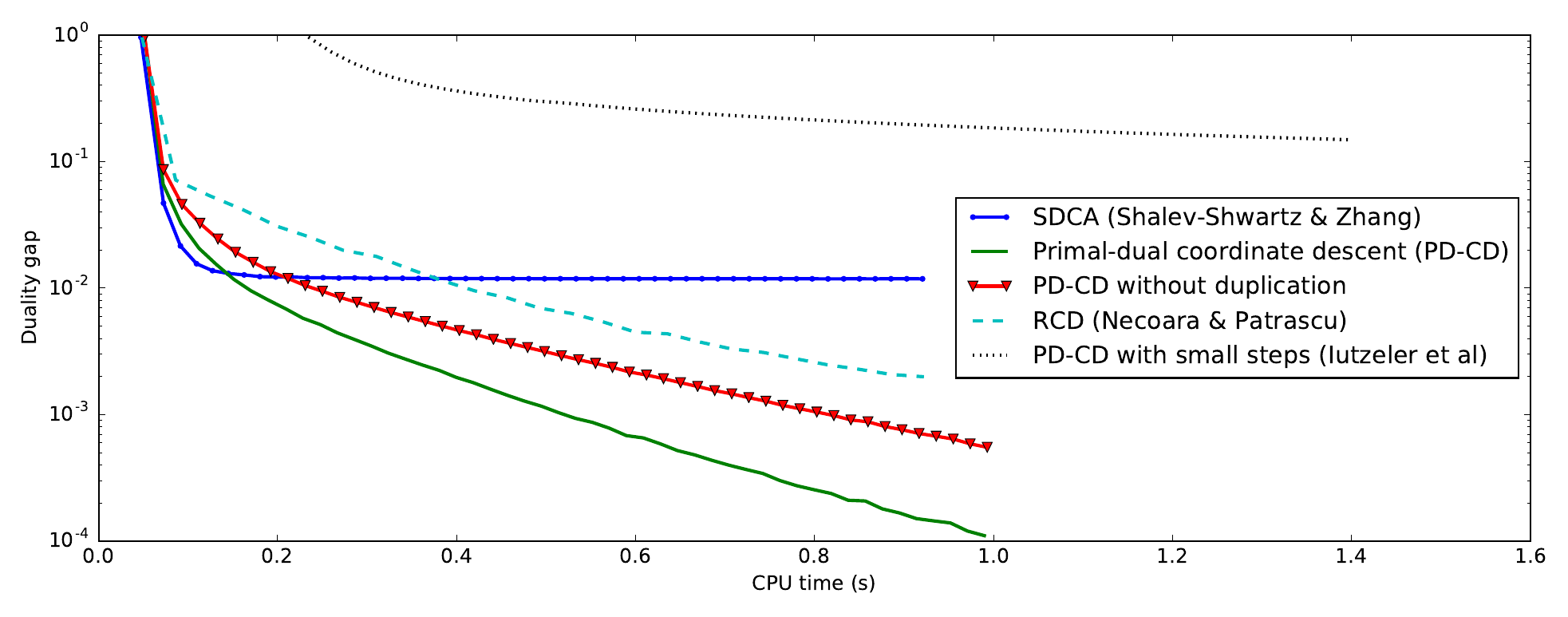}
\caption{Comparison of dual algorithms for the resolution of linear SVM on the RCV1 dataset.
We report the value of the duality gap after a post-processing to recover feasible primal and dual variables.
Primal variables are recovered as suggested in~\cite{Stoch-dual-Coord-Ascent}
and the intercept is recovered by exact minimization of the primal objective given the other primal variables.
When dual iterates are not feasible, we project them onto the dual feasible set before computing the dual objective. 
We stopped each algorithm after 100 passes through the data: note that the cost per iteration of the 5 algorithms is similar \revisiontrois{but that the algorithm of~\cite{iutzeler2013asynchronous} needs first to compute the Lipschitz constant of the gradient.}}
\label{fig:svm_rcv1}
\end{figure}

\begin{figure}
\centering
\includegraphics[width=0.33\linewidth, trim = 20 10 20 20, clip]{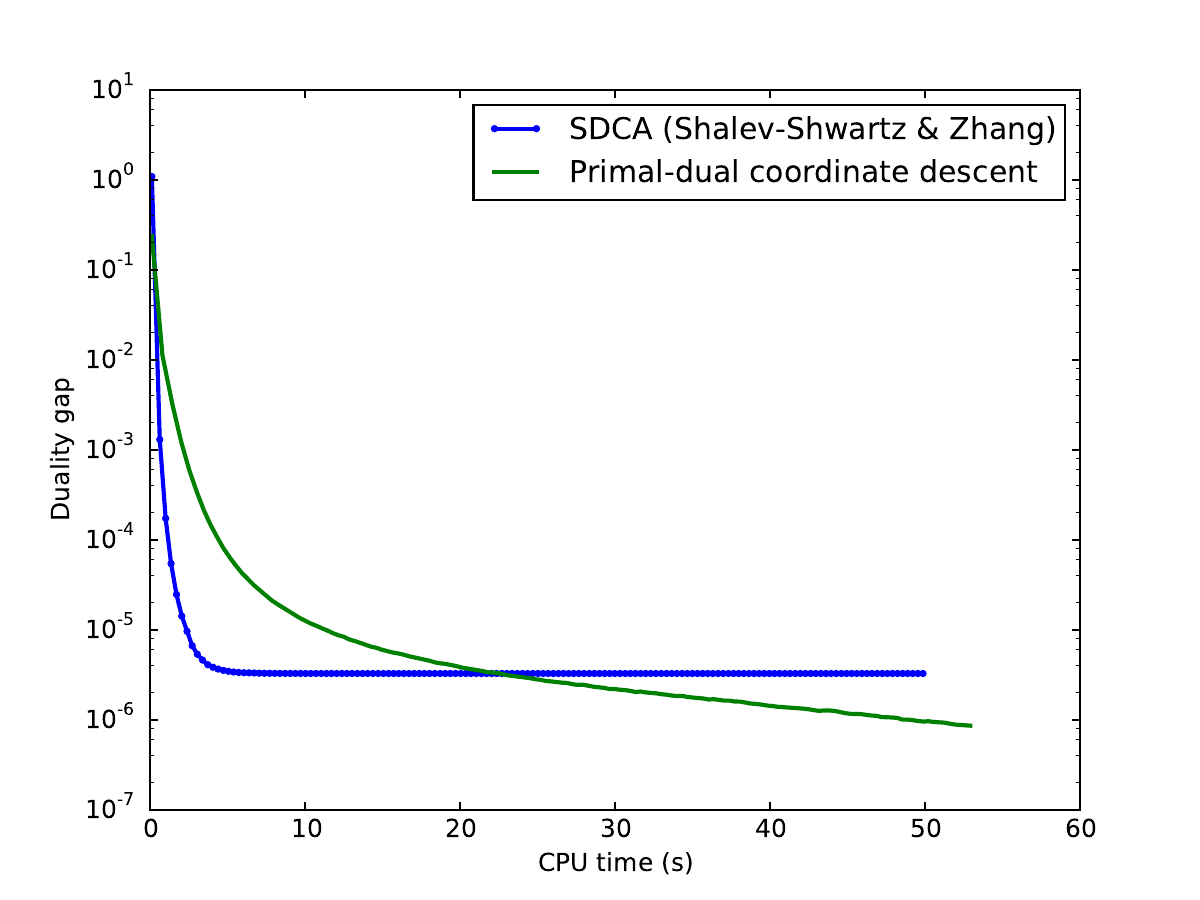}%
\includegraphics[width=0.33\linewidth, trim = 20 10 20 20, clip]{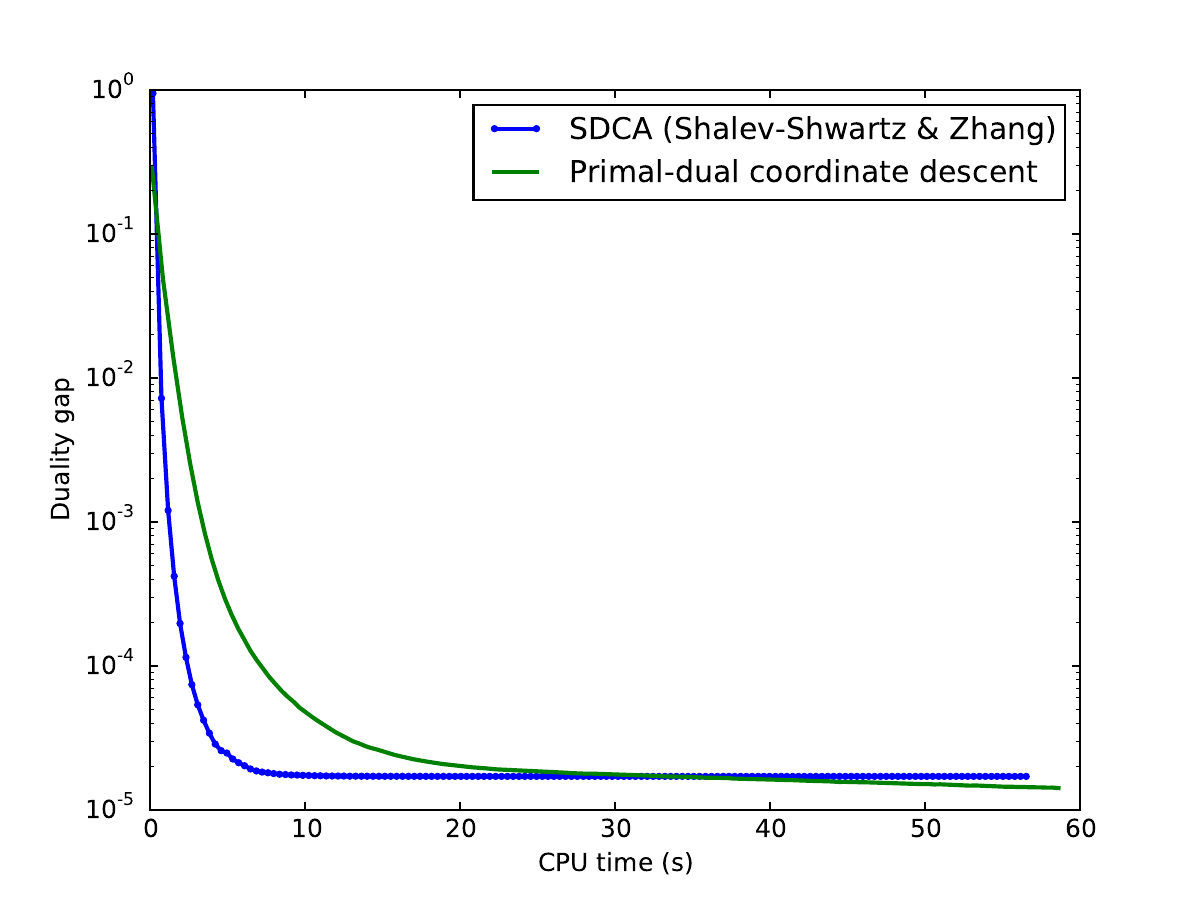}%
\includegraphics[width=0.33\linewidth, trim = 20 10 20 20, clip]{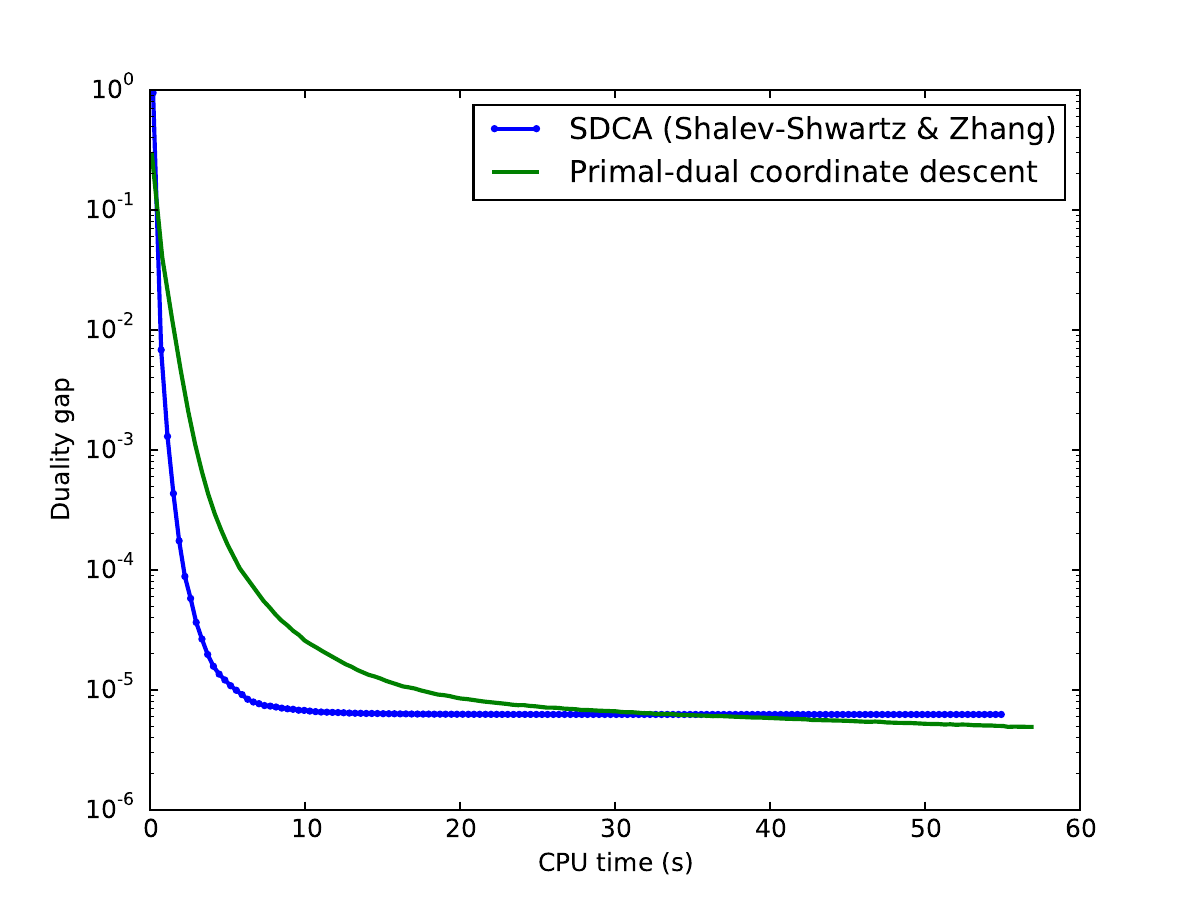}
\caption{Comparison of dual algorithms for the resolution of linear SVM on the KDD cup 2009 dataset 
for the appetency, churn and up-selling tasks (one plot for each).
We did the same post-processing as in Fig.~\ref{fig:svm_rcv1}.
We stopped each algorithm after 300 passes through the data.
We can see here also that dealing with the intercept allows us to 
find more accurate solutions for a similar computational cost as with SDCA.}
\label{fig:svm_orange}
\end{figure}

\revision{
\section{Conclusion}

In this work, we combined features of two seemingly incompatible versions
of coordinate descent: one based on Fej\'er monotonicity~\cite{combettes2014stochastic}, which allows
non-separable non-smooth functions, and one based on the decrease of the
function value~\cite{RT:UCDC}, which allows a large step size.
We proved the convergence of the algorithm and demonstrated its efficiency
on two large scale problems.

Our future work will focus on the limits of Theorem~\ref{the:main}. 
We believe that the restriction to uniform sampling 
probabilities can be removed. Also, by analogy with V\~u-Condat's method, 
one should be able to replace $\beta_i$ by $\beta_i/2$ in the step size condition.
A more prospective research, motivated by \cite{mairal2015incremental},
consists in studying the impact of
the non-smooth functions on the range of step sizes ensuring convergence. 

}

\section*{Acknowledgement}

We are grateful to Elvis Dohmatob for letting us use his benchmarking tool~\cite{dohmatob2014benchmarking}.


\bibliographystyle{siamplain}
\bibliography{literature}

\begin{thebibliography}{10}

\bibitem{bauschke2003bregman}
{\sc H.~H. Bauschke, J.~M. Borwein, and P.~L. Combettes}, {\em Bregman monotone
  optimization algorithms}, SIAM Journal on control and optimization, 42
  (2003), pp.~596--636.

\bibitem{beck2009fista}
{\sc A.~Beck and M.~Teboulle}, {\em A fast iterative shrinkage-thresholding
  algorithm for linear inverse problems}, SIAM Journal on Imaging Sciences, 2
  (2009), pp.~183--202.

\bibitem{bertsekas2011incremental}
{\sc D.~P. Bertsekas}, {\em Incremental proximal methods for large scale convex
  optimization}, Mathematical programming, 129 (2011), pp.~163--195.

\bibitem{bertsekas1989parallel}
{\sc D.~P. Bertsekas and J.~N. Tsitsiklis}, {\em Parallel and distributed
  computation: numerical methods}, Prentice-Hall, Inc., 1989.

\bibitem{bianchi2016using}
{\sc P.~Bianchi and O.~Fercoq}, {\em Using big steps in coordinate descent
  primal-dual algorithms}, in Proc. of the Conference on Decision and Control,
  2016.

\bibitem{bianchi2014stochastic}
{\sc P.~Bianchi, W.~Hachem, and F.~Iutzeler}, {\em A stochastic coordinate
  descent primal-dual algorithm and applications to large-scale composite
  optimization}, arXiv preprint arXiv:1407.0898,  (2014).

\bibitem{boyd2011distributed}
{\sc S.~Boyd, N.~Parikh, E.~Chu, B.~Peleato, and J.~Eckstein}, {\em Distributed
  optimization and statistical learning via the alternating direction method of
  multipliers}, Foundations and Trends{\textregistered} in Machine Learning, 3
  (2011), pp.~1--122.

\bibitem{cevher2014convex}
{\sc V.~Cevher, S.~Becker, and M.~Schmidt}, {\em Convex optimization for big
  data: Scalable, randomized, and parallel algorithms for big data analytics},
  Signal Processing Magazine, IEEE, 31 (2014), pp.~32--43.

\bibitem{chambolle2010introduction}
{\sc A.~Chambolle, V.~Caselles, D.~Cremers, M.~Novaga, and T.~Pock}, {\em An
  introduction to total variation for image analysis}, Theoretical foundations
  and numerical methods for sparse recovery, 9 (2010), pp.~263--340.

\bibitem{chambolle2014convergence}
{\sc A.~Chambolle and C.~Dossal}, {\em On the convergence of the iterates of
  ``{FISTA}''}, HAL report,  (2014).

\bibitem{chambolle2017stochastic}
{\sc A.~Chambolle, M.~J. Ehrhardt, P.~Richt{\'a}rik, and C.-B. Sch{\"o}nlieb},
  {\em Stochastic primal-dual hybrid gradient algorithm with arbitrary sampling
  and imaging application}, arXiv preprint arXiv:1706.04957,  (2017).

\bibitem{chambolle2011first}
{\sc A.~Chambolle and T.~Pock}, {\em A first-order primal-dual algorithm for
  convex problems with applications to imaging}, Journal of Mathematical
  Imaging and Vision, 40 (2011), pp.~120--145.

\bibitem{chambolle2014ergodic}
{\sc A.~Chambolle and T.~Pock}, {\em On the ergodic convergence rates of a
  first-order primal--dual algorithm}, Mathematical Programming,  (2015),
  pp.~1--35.

\bibitem{chang2011libsvm}
{\sc C.-C. Chang and C.-J. Lin}, {\em {LIBSVM}: A library for support vector
  machines}, ACM Transactions on Intelligent Systems and Technology (TIST), 2
  (2011), p.~27.

\bibitem{combettes2014stochastic}
{\sc P.~L. Combettes and J.-C. Pesquet}, {\em Stochastic quasi-{F}ej\'er
  block-coordinate fixed point iterations with random sweeping}, SIAM Journal
  on Optimization, 25 (2015), pp.~1221--1248.

\bibitem{condat2013primaldual}
{\sc L.~Condat}, {\em A primal--dual splitting method for convex optimization
  involving {L}ipschitzian, proximable and linear composite terms}, Journal of
  Optimization Theory and Applications, 158 (2013), pp.~460--479.

\bibitem{davis2015three}
{\sc D.~Davis and W.~Yin}, {\em A three-operator splitting scheme and its
  optimization applications}, arXiv preprint arXiv:1504.01032,  (2015).

\bibitem{davis2017faster}
{\sc D.~Davis and W.~Yin}, {\em Faster convergence rates of relaxed
  peaceman-rachford and admm under regularity assumptions}, Mathematics of
  Operations Research,  (2017).

\bibitem{dohmatob2014benchmarking}
{\sc E.~Dohmatob, A.~Gramfort, B.~Thirion, and G.~Varoquaux}, {\em Benchmarking
  solvers for tv-l1 least-squares and logistic regression in brain imaging}, in
  Pattern Recognition in Neuroimaging (PRNI), IEEE, 2014.

\bibitem{FR:2013approx}
{\sc O.~Fercoq and P.~Richt{\'a}rik}, {\em Accelerated, parallel and proximal
  coordinate descent}, SIAM Journal on Optimization, 25 (2015), pp.~1997--2023.

\bibitem{Friedman_Hastie_Hofling_Tibshirani07}
{\sc J.~Friedman, T.~Hastie, H.~H{\"o}fling, and R.~Tibshirani}, {\em Pathwise
  coordinate optimization}, Ann. Appl. Stat., 1 (2007), pp.~302--332.

\bibitem{gabay1983chapter}
{\sc D.~Gabay}, {\em Chapter ix applications of the method of multipliers to
  variational inequalities}, Studies in mathematics and its applications, 15
  (1983), pp.~299--331.

\bibitem{gabay1976dual}
{\sc D.~Gabay and B.~Mercier}, {\em A dual algorithm for the solution of
  nonlinear variational problems via finite element approximation}, Computers
  \& Mathematics with Applications, 2 (1976), pp.~17--40.

\bibitem{gao2016randomized}
{\sc X.~Gao, Y.~Xu, and S.~Zhang}, {\em Randomized primal-dual proximal block
  coordinate updates}, arXiv preprint arXiv:1605.05969,  (2016).

\bibitem{glowinski1975approximation}
{\sc R.~Glowinski and A.~Marroco}, {\em Sur l'approximation, par
  {\'e}l{\'e}ments finis d'ordre un, et la r{\'e}solution, par
  p{\'e}nalisation-dualit{\'e} d'une classe de probl{\`e}mes de dirichlet non
  lin{\'e}aires}, Revue fran{\c{c}}aise d'automatique, informatique, recherche
  op{\'e}rationnelle. Analyse num{\'e}rique, 9 (1975), pp.~41--76.

\bibitem{guyon2009analysis}
{\sc I.~Guyon, V.~Lemaire, M.~Boull{\'e}, G.~Dror, and D.~Vogel}, {\em Analysis
  of the {KDD} cup 2009: Fast scoring on a large {O}range customer database.},
  in KDD Cup, 2009, pp.~1--22.

\bibitem{he2012convergence}
{\sc B.~He and X.~Yuan}, {\em Convergence analysis of primal-dual algorithms
  for a saddle-point problem: from contraction perspective}, SIAM Journal on
  Imaging Sciences, 5 (2012), pp.~119--149.

\bibitem{horn2012matrix}
{\sc R.~A. Horn and C.~R. Johnson}, {\em Matrix analysis}, Cambridge university
  press, 2012.

\bibitem{iutzeler2013asynchronous}
{\sc F.~Iutzeler, P.~Bianchi, P.~Ciblat, and W.~Hachem}, {\em Asynchronous
  distributed optimization using a randomized alternating direction method of
  multipliers}, in Decision and Control (CDC), 2013 IEEE 52nd Annual Conference
  on, IEEE, 2013, pp.~3671--3676.

\bibitem{lewis2004rcv1}
{\sc D.~D. Lewis, Y.~Yang, T.~G. Rose, and F.~Li}, {\em Rcv1: A new benchmark
  collection for text categorization research}, Journal of Machine Learning
  Research, 5 (2004), pp.~361--397.

\bibitem{lin2014accelerated}
{\sc Q.~Lin, Z.~Lu, and L.~Xiao}, {\em An accelerated proximal coordinate
  gradient method}, in Advances in Neural Information Processing Systems, 2014,
  pp.~3059--3067.

\bibitem{Tseng:CCMCDM:Smooth}
{\sc Z.~Q. Luo and P.~Tseng}, {\em A coordinate gradient descent method for
  nonsmooth separable minimization}, Journal of optimization theory and
  applications, 72 (2002).

\bibitem{mairal2015incremental}
{\sc J.~Mairal}, {\em Incremental majorization-minimization optimization with
  application to large-scale machine learning}, SIAM Journal on Optimization,
  25 (2015), pp.~829--855.

\bibitem{necoara2014random}
{\sc I.~Necoara and A.~Patrascu}, {\em A random coordinate descent algorithm
  for optimization problems with composite objective function and linear
  coupled constraints}, tech. report, Politehnica University of Bucharest,
  2012.

\bibitem{Nesterov:2010RCDM}
{\sc Y.~Nesterov}, {\em Efficiency of coordinate descent methods on huge-scale
  optimization problems}, SIAM Journal on Optimization, 22 (2012),
  pp.~341--362.

\bibitem{Nesterov-Subgrad-Huge}
{\sc Y.~Nesterov}, {\em Subgradient methods for huge-scale optimization
  problems}, Mathematical Programming, 146 (2014), pp.~275--297.

\bibitem{pesquet2014randomprimaldual}
{\sc J.-C. Pesquet and A.~Repetti}, {\em A class of randomized primal-dual
  algorithms for distributed optimization}, Journal of Nonlinear Convex
  Analysis, 16 (2015).

\bibitem{RT:UCDC}
{\sc P.~Richt{\'a}rik and M.~Tak{\'a}{\v{c}}}, {\em Iteration complexity of
  randomized block-coordinate descent methods for minimizing a composite
  function}, Mathematical Programming, 144 (2014), pp.~1--38.

\bibitem{richtarik2014iteration}
{\sc P.~Richt{\'a}rik and M.~Tak{\'a}{\v{c}}}, {\em Iteration complexity of
  randomized block-coordinate descent methods for minimizing a composite
  function}, Mathematical Programming, 144 (2014), pp.~1--38.

\bibitem{RT:PCDM}
{\sc P.~Richt{\'a}rik and M.~Tak{\'a}{\v{c}}}, {\em Parallel coordinate descent
  methods for big data optimization}, Mathematical Programming,  (2015),
  pp.~1--52.

\bibitem{RT:TTD2011}
{\sc P.~Richt\'{a}rik and M.~Tak\'{a}\v{c}}, {\em Efficient serial and parallel
  coordinate descent method for huge-scale truss topology design}, in
  Operations Research Proceedings, Springer, 2012, pp.~27--32.

\bibitem{robbins1971convergence}
{\sc H.~Robbins and D.~Siegmund}, {\em A convergence theorem for non negative
  almost supermartingales and some applications}, in Optimizing Methods in
  Statistics, Academic Press, New York, 1971, pp.~233--257.

\bibitem{Stoch-dual-Coord-Ascent}
{\sc S.~Shalev-Shwartz and T.~Zhang}, {\em Stochastic dual coordinate ascent
  methods for regularized loss minimization}, Journal of Machine Learning
  Research, 14 (2013), pp.~567--599.

\bibitem{suzuki2014stochastic}
{\sc T.~Suzuki}, {\em Stochastic dual coordinate ascent with alternating
  direction method of multipliers}, in Proceedings of the 31st International
  Conference on Machine Learning (ICML-14), 2014, pp.~736--744.

\bibitem{tran2014primal}
{\sc Q.~{Tran-Dinh} and V.~Cevher}, {\em A primal-dual algorithmic framework
  for constrained convex minimization}, arXiv preprint arXiv:1406.5403,
  (2014).

\bibitem{tran2015smooth}
{\sc Q.~Tran-Dinh, O.~Fercoq, and V.~Cevher}, {\em A smooth primal-dual
  optimization framework for nonsmooth composite convex minimization}, arXiv
  preprint:1507.06243,  (2016).

\bibitem{tseng2008accelerated}
{\sc P.~Tseng}, {\em On accelerated proximal gradient methods for
  convex-concave optimization}, Submitted to SIAM Journal on Optimization,
  (2008).

\bibitem{Tseng01convergence}
{\sc P.~Tseng and C.~O.~L. Mangasarian}, {\em Convergence of a block coordinate
  descent method for nondifferentiable minimization}, J. Optim Theory Appl,
  (2001), pp.~475--494.

\bibitem{Tseng:CGDM:Nonsmooth}
{\sc P.~Tseng and S.~Yun}, {\em A coordinate gradient descent method for
  nonsmooth separable minimization}, Mathematical Programming, 117 (2009),
  pp.~387--423.

\bibitem{Tseng:2010:CGD:1861477.1861509}
{\sc P.~Tseng and S.~Yun}, {\em A coordinate gradient descent method for
  linearly constrained smooth optimization and support vector machines
  training}, Comput. Optim. Appl., 47 (2010), pp.~179--206.

\bibitem{vu2013splitting}
{\sc B.~C. V{\~u}}, {\em A splitting algorithm for dual monotone inclusions
  involving cocoercive operators}, Advances in Computational Mathematics, 38
  (2013), pp.~667--681.

\bibitem{warga1963minimizing}
{\sc J.~Warga}, {\em Minimizing certain convex functions}, Journal of the
  Society for Industrial \& Applied Mathematics, 11 (1963), pp.~588--593.

\bibitem{zhang2014stochastic}
{\sc Y.~Zhang and L.~Xiao}, {\em Stochastic primal-dual coordinate method for
  regularized empirical risk minimization}, arXiv preprint arXiv:1409.3257,
  (2014).

\bibitem{zhu1997lbfgs}
{\sc C.~Zhu, R.~H. Byrd, P.~Lu, and J.~Nocedal}, {\em Algorithm 778:
  {L-BFGS-B}: Fortran subroutines for large-scale bound-constrained
  optimization}, ACM Transactions on Mathematical Software (TOMS), 23 (1997),
  pp.~550--560.

\end{thebibliography}

\end{document}